\documentclass[reqno]{amsart}
\usepackage[utf8x]{inputenc}  

\usepackage{setspace}
\onehalfspacing
\usepackage[left=3.2cm, right=3.2cm, bottom=4cm]{geometry}                 

\usepackage{graphicx} 
\usepackage{pgf,tikz}
\usetikzlibrary{arrows}
\usepackage{amssymb}
\usepackage{pdfsync}
\usepackage{mathrsfs}
\usepackage{hyperref} 
\usepackage{verbatim} 
\usepackage{epstopdf}
\DeclareGraphicsRule{.tif}{png}{.png}{`convert #1 `dirname #1`/`basename #1 .tif`.png}
\usepackage{bbm} 
\usepackage[colorinlistoftodos,prependcaption,textsize=tiny]{todonotes}
\usepackage{xargs}

\def\R{\mathbb{R}}

\def\N{\mathbb{N}}

\def\C{\mathbb{C}}

\def\SS{\mathbb{S}}

\newcommandx{\emanuel}[2][1=]{\todo[linecolor=green,backgroundcolor=green!25,bordercolor=black,#1]{#2}}

\newcommandx{\diogo}[2][1=]{\todo[linecolor=orange,backgroundcolor=orange!25,bordercolor=orange,#1]{#2}}

\newcommandx{\mateus}[2][1=]{\todo[linecolor=blue,backgroundcolor=blue!25,bordercolor=blue,#1]{#2}}

\newcommandx{\danger}[2][1=]{\todo[linecolor=red,backgroundcolor=red!25,bordercolor=blue,#1]{#2}}

\renewcommand{\d}{\text{\rm d}}

 %
 %

\newtheorem{theorem}{Theorem}

\newtheorem{proposition}[theorem]{Proposition}
\newtheorem{lemma}[theorem]{Lemma}

\makeatletter
\DeclareFontFamily{U}{tipa}{}
\DeclareFontShape{U}{tipa}{m}{n}{<->tipa10}{}
\newcommand{\arc@char}{{\usefont{U}{tipa}{m}{n}\symbol{62}}}%

\newcommand{\arc}[1]{\mathpalette\arc@arc{#1}}

\newcommand{\arc@arc}[2]{%
  \sbox0{$\m@th#1#2$}%
  \vbox{
    \hbox{\resizebox{\wd0}{\height}{\arc@char}}
    \nointerlineskip
    \box0
  }%
}
\makeatother

\numberwithin{equation}{section}

\allowdisplaybreaks

\newcommand{\intav}[1]{\mathchoice {\mathop{\vrule width 6pt height 3 pt depth  -2.5pt
\kern -8pt \intop}\nolimits_{\kern -6pt#1}} {\mathop{\vrule width
5pt height 3  pt depth -2.6pt \kern -6pt \intop}\nolimits_{#1}}
{\mathop{\vrule width 5pt height 3 pt depth -2.6pt \kern -6pt
\intop}\nolimits_{#1}} {\mathop{\vrule width 5pt height 3 pt depth
-2.6pt \kern -6pt \intop}\nolimits_{#1}}}

\newcommand{\intavl}[1]{\mathchoice {\mathop{\vrule width 6pt height 3 pt depth  -2.5pt
\kern -8pt \intop}\limits_{\kern -6pt#1}} {\mathop{\vrule width 5pt
height 3  pt depth -2.6pt \kern -6pt \intop}\nolimits_{#1}}
{\mathop{\vrule width 5pt height 3 pt depth -2.6pt \kern -6pt
\intop}\nolimits_{#1}} {\mathop{\vrule width 5pt height 3 pt depth
-2.6pt \kern -6pt \intop}\nolimits_{#1}}}

\title[Gradient bounds for radial maximal functions]{Gradient bounds for radial maximal functions}

\author[Carneiro]{Emanuel Carneiro}
\author[Gonz\'{a}lez-Riquelme]{Cristian Gonz\'{a}lez-Riquelme}

\address{
ICTP - The Abdus Salam International Centre for Theoretical Physics, Strada Costiera, 11, I - 34151, Trieste, Italy}

\address{IMPA - Instituto de Matem\'{a}tica Pura e Aplicada, Estrada Dona Castorina, 110, Jardim Bot\^{a}nico, Rio de Janeiro - RJ, Brazil, 22460-320.}

\email{carneiro@ictp.it}
\email{carneiro@impa.br}

\address{IMPA - Instituto de Matem\'{a}tica Pura e Aplicada, Estrada Dona Castorina, 110, Jardim Bot\^{a}nico, Rio de Janeiro - RJ, Brazil, 22460-320.}
\email{cristian@impa.br}

\date{\today}                                           
\begin{document}

\subjclass[2010]{42B25, 46E35, 31B05, 35J05, 35K08}
\keywords{Maximal operators, Sobolev spaces, bounded variation, convolution, sphere}
\begin{abstract} 
In this paper we study the regularity properties of certain maximal operators of convolution type at the endpoint $p=1$, when acting on radial data. In particular, for the heat flow maximal operator and the Poisson maximal operator, when the initial datum $u_0 \in W^{1,1}(
\R^d)$ is a radial function, we show that the associated maximal function $u^*$ is weakly differentiable and  
$$\|\nabla u^*\|_{L^1(\R^d)} \lesssim_d \|\nabla u_0\|_{L^1(\R^d)}.$$
This establishes the analogue of a recent result of H. Luiro for the uncentered Hardy-Littlewood maximal operator, now in a centered setting with smooth kernels. In a second part of the paper, we establish similar gradient bounds for maximal operators on the sphere $\mathbb{S}^d$, when acting on polar functions. Our study includes the uncentered Hardy-Littlewood maximal operator, the heat flow maximal operator and the Poisson maximal operator on $\mathbb{S}^d$.
\end{abstract}

\maketitle 

\section{Introduction}

\subsection{A brief historical perspective} Maximal operators are central objects of study in harmonic analysis. One of the most basic examples is  the centered Hardy-Littlewood maximal operator, denoted here by $M$. For $f \in L^1_{{\rm loc}}(\R^d)$ it is defined as
$$Mf(x) = \sup_{r >0} \frac{1}{m(B_r(x))}\int_{B_r(x)} |f(y)|\,\d y = \sup_{r >0}\  \intav{B_r(x)} |f(y)|\,\d y\,,$$
where $B_r(x) \subset \R^d$ is the open ball centered at $x$ with radius $r$, and $m(B_r(x))$ denotes its $d$-dimensional Lebesgue measure. The uncentered Hardy-Littlewood maximal operator, denoted here by $\widetilde{M}$, is defined analogously, taking the supremum over open balls that simply contain the point $x$ but that are not necessarily centered at $x$. The fundamental theorem of Hardy, Littlewood and Wiener states that $M: L^1(\R^d) \to L^{1,\infty}(\R^d)$ and $M: L^p(\R^d) \to L^{p}(\R^d)$, for $1 < p \leq \infty$, are bounded operators. The same holds for $\widetilde{M}$.

\smallskip

In the seminal paper \cite{Ki}, Kinnunen studied the action of the Hardy-Littlewood maximal operator on Sobolev functions, giving an elegant proof that $M: W^{1,p}(\R^d) \to W^{1,p}(\R^d)$ is bounded for $1 < p \leq \infty$. This work paved the way for several interesting contributions to the regularity theory of maximal operators over the past two decades, with interesting connections to potential theory and partial differential equations, see for instance \cite{ACP, BM, BRS, CH, CMa, CMP, HM, HLX, KL, KiSa, LW, Lu1, M, M2, ML, PPSS, Ra, Saa}.
One of the longstanding problems in this field is concerned with the regularity at the endpoint $p=1$. This is the $W^{1,1}$-problem, formally posed by Haj\l asz and Onninen in \cite{HO}: if $f \in W^{1,1}(\R^d)$, do we have that $Mf$ is weakly differentiable and 
$$\|\nabla M f\|_{L^1(\R^d)} \lesssim_d \, \|\nabla f\|_{L^1(\R^d)} \ ?$$

\smallskip

This problem has been settled affirmatively in dimension $d=1$, in the uncentered case by Tanaka \cite{Ta} and Aldaz and P\'{e}rez L\'{a}zaro \cite{AP}, and in the centered case by Kurka \cite{Ku}. The higher dimensional version is generally open, having been settled affirmatively only for the {\it uncentered} Hardy-Littlewood maximal operator $\widetilde{M}$ in the important case of {\it radial datum $f$}, by Luiro in \cite{Lu2}. This beautiful work of Luiro \cite{Lu2} is fundamental for the present paper, for we aim to extend it to different contexts.

\subsection{Maximal operators of convolution type on $\R^d$} We start by investigating the higher dimensional $W^{1,1}$-problem for certain centered maximal operators of convolution type associated to partial differential equations, {\it in the case of radial data}, establishing a result analogous to that of Luiro \cite{Lu2}. To our knowledge, this is the first instance of an affirmative result for centered maximal operators, in what concerns the boundedness of the variation, in the higher dimensional setting. 

\smallskip

We borrow the basic setup from \cite{CFS}. Let $\varphi: \R^d \times (0,\infty) \to \R$ be a nonnegative function such that \begin{equation*}
\int_{\R^d} \varphi(x, t)\,\d x = 1
\end{equation*} 
for each $t >0$. Assume also that, when $t \to 0$, the family $\varphi(\cdot, t)$ is an approximation of the identity, in the sense that $\lim_{t \to 0} \varphi(\cdot, t) * u_0(x) = u_0(x)$ for a.e. $x \in \R^d$, if $u_0 \in L^p(\R^d)$ for some $1 \leq p \leq \infty$. For an initial datum $u_0:\R^d \to \R$ we consider the evolution
\begin{equation}\label{Def_u_x_t}
u(x,t) = \big(|u_0|* \varphi(\cdot, t)  \big)(x)
\end{equation}
and the associated maximal function
\begin{equation}\label{Notation_u*}
u^*(x) := \sup_{t>0} u(x,t).
\end{equation}
Notice the use of the shorter notation $u^*$ for simplicity. One could also refer to \eqref{Notation_u*} as $M_{\varphi} u_0$. In this setting, note that the centered Hardy-Littlewood maximal operator corresponds to the kernel $\varphi(x,t) = \frac{1}{t^d m(B_1)}\chi_{B_1}(x/t)$. We consider here kernels $\varphi_{a,b}$ that are fundamental solutions of
\begin{equation*}
au_{tt} - bu_t + \Delta u = 0 \ \ \ \ {\rm in}  \ \ \  \R^d \times (0,\infty)\,,
\end{equation*}
with $a,b \geq 0$ and $(a,b) \neq (0,0)$. That is, the function $u(x,t)$ defined in \eqref{Def_u_x_t} solves this equation in the upper half-space with initial datum $u(x,0) = |u_0(x)|$. By appropriate space-time dilations it suffices to consider the following three nonnegative and radial decreasing kernels as basic profiles:
\begin{align}\label{Poisson_kernel}
\varphi_{1,0}(x,t) &= \frac{\Gamma \left(\frac{d+1}{2}\right)}{\pi^{(d+1)/2}}\ \frac{t}{(|x|^2 + t^2)^{(d+1)/2}} \ \ \ \ \ \ \ {\rm (Poisson \ kernel)}\\
\varphi_{0,1}(x,t) & = \frac{1}{(4 \pi t)^{d/2}}\ e^{-|x|^2/4t}  \ \ \ \ \ \ \ \ \ \ \ \ \ \ \ \ \ \ \ {\rm (Heat \ kernel)} \label{heat_kernel}\\
\varphi_{1,1}(x,t)  &= \int_{\R^d} e^{-t \big(\frac{-1 + \sqrt{1 + 16 \pi^2 |\xi|^2}}{2}\big)} \, e^{2\pi i x \cdot \xi}\,\d \xi.\label{Ker3}
\end{align}
The fact that \eqref{Ker3} is nonnegative and radial decreasing was proved in \cite{CFS}. The {\it Poisson maximal operator} and the {\it heat flow maximal operator}, given by the kernels \eqref{Poisson_kernel} and \eqref{heat_kernel} respectively, are the classical and most important examples we want to keep in mind, but our methods could be adapted to treat other maximal operators associated to differential equations. Our first result is the following.
\begin{theorem}\label{Thm1}
Let $\varphi$ be given by \eqref{Poisson_kernel}, \eqref{heat_kernel} or \eqref{Ker3}. If $u_0 \in W^{1,1}(\R^d)$ is radial, then $u^*$ is weakly differentiable and 
$$\|\nabla u^*\|_{L^1(\R^d)} \lesssim_d  \|\nabla u_0\|_{L^1(\R^d)}.$$
\end{theorem}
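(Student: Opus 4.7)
The plan is to develop a centered, smooth-kernel counterpart of Luiro's strategy \cite{Lu2} for the radial uncentered Hardy--Littlewood maximal function, with the ``optimal ball'' parameter replaced by an ``optimal time'' $T>0$ and making essential use of the joint smoothness in $(x,t)$ of the kernels \eqref{Poisson_kernel}--\eqref{Ker3}. Since $u_0$ and each $\varphi(\cdot,t)$ are radial, so is $u(\cdot,t)$ for every $t>0$, and hence so is $u^*$. Writing $u^*(x)=U^*(|x|)$, $|u_0(x)|=U_0(|x|)$ and $u(x,t)=U(|x|,t)$, the theorem reduces to the local absolute continuity of $U^*$ on $(0,\infty)$ together with the weighted one-dimensional estimate
\[
\int_0^\infty |(U^*)'(r)|\,r^{d-1}\,\d r \;\lesssim_d\; \int_0^\infty |U_0'(r)|\,r^{d-1}\,\d r.
\]

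I next split $(0,\infty)$ into the contact set $C=\{U^*=U_0\}$ and the detachment set $D=\{U^*>U_0\}$. The set $D$ is open; on each $r\in D$ the supremum in $t$ is attained at an interior time $T(r)\in(0,\infty)$ satisfying $\partial_t U(r,T(r))=0$, since $U(r,t)\to U_0(r)$ as $t\to 0^+$ and $U(r,t)\to 0$ as $t\to\infty$. The smoothness of the kernels and the implicit function theorem (off the negligible degeneracy set of $\partial_t^2 U$) make $T$ locally Lipschitz on $D$ and yield the envelope identity $(U^*)'(r)=\partial_r U(r,T(r))$ pointwise on $D$. On $C$, the classical coincidence-set argument for Sobolev functions gives $(U^*)'=U_0'$ a.e., and hence $\int_C |(U^*)'|\,r^{d-1}\,\d r \le \int_0^\infty |U_0'|\,r^{d-1}\,\d r$; combined with the local $C^1$ regularity on $D$, this also upgrades $U^*$ to the weighted Sobolev class required for the final statement.

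The heart of the argument is the estimate of the detachment contribution $\int_D |(U^*)'(r)|\,r^{d-1}\,\d r$. Writing $D=\bigsqcup_j(a_j,b_j)$, my aim is to prove that on each component $U^*$ is \emph{monotone}, with boundary values $U^*(a_j)=U_0(a_j)$ and $U^*(b_j)=U_0(b_j)$; this would be the smooth-kernel, centered analogue of Luiro's structural observation about the optimal ball on a detachment interval. Granted monotonicity, integration by parts against the weight $r^{d-1}$ and the boundary identities bound $\int_{a_j}^{b_j}|(U^*)'|\,r^{d-1}\,\d r$ by $\int_{a_j}^{b_j}|U_0'|\,r^{d-1}\,\d r$ modulo the non-negative remainder $(d-1)\int_{a_j}^{b_j}(U^*-U_0)\,r^{d-2}\,\d r$. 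After summing over $j$, the pointwise majorization $u^*\le c_d\,\widetilde{M}u_0$ (valid for any approximation of identity by radial decreasing kernels) together with Luiro's radial $W^{1,1}$ bound for $\widetilde{M}u_0$ and a weighted Hardy-type manipulation to absorb the change of exponent from $r^{d-2}$ to $r^{d-1}$ closes the estimate.

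The principal obstacle is the monotonicity claim on each detachment component. An interior local maximum $r_0$ of $U^*$ in $(a_j,b_j)$ would force the simultaneous vanishing $\partial_r U(r_0,T(r_0))=\partial_t U(r_0,T(r_0))=0$; I intend to rule this out by using the PDE $au_{tt}-bu_t+\Delta u=0$ to convert the time-criticality at $T(r_0)$ into a spatial identity, and then comparing $U(\cdot,T(r_0))$ with a radial-decreasing barrier built from $\varphi(\cdot,T(r_0))$ to contradict the strict inequality $U(r_0,T(r_0))>U_0(r_0)$. This step is expected to be delicate and somewhat kernel-sensitive, but the unified PDE framework of \cite{CFS} suggests that a common treatment of all three families \eqref{Poisson_kernel}--\eqref{Ker3} is feasible.
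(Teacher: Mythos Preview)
Your overall architecture --- split into contact and detachment sets, control the detachment contribution via a structural property of $U^*$ there, then integrate by parts and close with Luiro's bound for $\widetilde{M}u_0$ via a Hardy-type step --- is exactly the paper's. But the structural claim you make is too strong, and your route to it is speculative.

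First, ruling out an interior strict local maximum of $U^*$ on $(a_j,b_j)$ does \emph{not} yield monotonicity; it yields only a V-shape (non-increasing on $[a_j,\tau]$, non-decreasing on $[\tau,b_j]$ for some $\tau$). This matters for your integration by parts: in the V-shaped case the sign of the weight-derivative term flips at $\tau$, and your clean remainder $(d-1)\int_{a_j}^{b_j}(U^*-U_0)\,r^{d-2}\,\d r$ is not what comes out. The paper handles this asymmetry by using $u^*\le\widetilde{M}u_0$ on $(a_j,\tau)$ (where the sign is favorable) and $u^*\ge u_0$ on $(\tau,b_j)$ (where it is not), arriving at a remainder $(d-1)\int_0^\infty \widetilde{M}u_0(r)\,r^{d-2}\,\d r$; this is then converted to $\int_0^\infty |(\widetilde{M}u_0)'(r)|\,r^{d-1}\,\d r$ by the Hardy-type manipulation you allude to, and Luiro finishes.

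Second, and more seriously, your mechanism for excluding a strict local maximum is not worked out. The simultaneous vanishing $\partial_r U=\partial_t U=0$ at $(r_0,T(r_0))$ is not by itself a contradiction, and the ``radial-decreasing barrier built from $\varphi(\cdot,T(r_0))$'' is vague --- there is no evident reason such a comparison should force $U(r_0,T(r_0))\le U_0(r_0)$. The paper bypasses this entirely: it invokes the result from \cite{CFS} that $u^*$ is \emph{subharmonic} on the detachment set $A_d\subset\R^d$ (a consequence of the semigroup property and the parabolic/elliptic maximum principle for the underlying PDE). Subharmonicity immediately forbids a strict local maximum of the radial profile via the sub-mean-value property in $\R^d$, which is the clean idea missing from your outline.
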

The intuitive idea behind the proof of this result is as follows. First we reduce matters to the study of nonnegative functions $u_0$ with some degree of smoothness, say Lipschitz. We are then able to invoke one of the main results of \cite{CFS, CS}, that in the {\it detachment set} $\{u^*> |u_0|\}$ the function $u^*$ is {\it subharmonic}. The proof of this fact relies on some of the qualitative properties of the underlying partial differential equations (e.g. maximum principles and semigroup property). As observed in \cite[Theorem 1 (iv)]{CFS}, this subharmonicity implies a control on the $L^2$-norm of $\nabla u^*$ by the $L^2$-norm of $\nabla u_0$. To arrive at the $L^1$-control we use the fact that $u^*$ is pointwise smaller than $\widetilde{M}u_0$. Hence, in the case of radial functions, we have a relatively well-behaved (i.e. subharmonic in the detachment set) function, namely $u^*$, that is trapped between $u_0$ and $\widetilde{M}u_0$, and the latter comes with an $L^1$-control of the gradient by the result of Luiro \cite{Lu2}. As we shall see, these pieces together will ultimately imply the control of the $L^1$-norm of $\nabla u^*$ as well.

\subsection{The Hardy-Littlewood maximal operator on $\mathbb{S}^{d}$} We now move our discussion to consider maximal operators acting on functions defined on the sphere $\mathbb{S}^{d} \subset \R^{d+1}$, in order to develop an analogous theory. First, let us establish the basic notation to be used in this context. We let $d(\zeta,\eta)$ denote the geodesic distance between two points $\zeta,\eta \in \mathbb{S}^{d}$. Let $\mathcal{B}_r(\zeta) \subset \mathbb{S}^{d}$ be the open geodesic ball of center $\zeta \in \mathbb{S}^{d}$ and radius $r >0$, that is
$$\mathcal{B}_r(\zeta) = \{ \eta \in \mathbb{S}^{d} \ : \ d(\zeta,\eta) < r\},$$
and let $\overline{\mathcal{B}_r(\zeta)}$ be the corresponding closed ball. Let $\widetilde{\mathcal{M}}$ denote the uncentered Hardy-Littlewood maximal operator on the sphere $\mathbb{S}^{d}$, that is, for $f \in L^1_{{\rm loc}}(\mathbb{S}^{d})$,
$$\widetilde{\mathcal{M}}f(\xi) = \sup_{\{\overline{\mathcal{B}_r(\zeta)} \ : \ \xi \in \overline{\mathcal{B}_r(\zeta)}\}} \frac{1}{\sigma(\mathcal{B}_r(\zeta))}\int_{\mathcal{B}_r(\zeta)} |f(\eta)|\,\d \sigma(\eta) = \sup_{\{\overline{\mathcal{B}_r(\zeta)} \ : \ \xi \in \overline{\mathcal{B}_r(\zeta)}\}}\  \intav{\mathcal{B}_r(\zeta)} |f(\eta)|\,\d \sigma(\eta),$$
where $\sigma = \sigma_d$ denotes the usual surface measure on the sphere $\mathbb{S}^{d}$. The centered version ${\mathcal{M}}$ would be defined with centered geodesic balls. Fix ${\bf e} = (1, 0,0,\ldots,0) \in \R^{d+1}$ to be our north pole. We say that a function $f: \mathbb{S}^{d} \to \C$ is {\it polar} if for every $\xi, \eta \in \mathbb{S}^{d}$ with $ \xi \cdot {\bf e}  =  \eta \cdot {\bf e} $ we have $f(\xi) = f(\eta)$. This will be the analogue, in the spherical setting, of a radial function in the Euclidean setting.

\smallskip

When working on the circle $\mathbb{S}^1$, an adaptation of the proof of Aldaz and P\'{e}rez L\'{a}zaro \cite{AP} yields ${\rm Var} (\widetilde{\mathcal{M}}f) \leq {\rm Var} (f)$, where ${\rm Var} (f)$ denotes the total variation of the function $f$. This follows from the fact that $\widetilde{\mathcal{M}}f$ has no local maxima in the detachment set $\{\widetilde{\mathcal{M}}f >|f|\}$ (say, for $f$ Lipschitz). Our second result is the extension of this statement to the multidimensional setting, in the case of polar functions. For the basic theory of Sobolev spaces on the sphere $\mathbb{S}^{d}$ we refer the reader to \cite{Dai_Xu}.
    
\begin{theorem}\label{Thm2-sphere}
If $f \in W^{1,1} (\mathbb{S}^{d})$ is a polar function, then $\widetilde{\mathcal{M}}f$ is weakly differentiable and 
$$\|\nabla \widetilde{\mathcal{M}}f\|_{L^1(\mathbb{S}^{d})} \lesssim_d \|\nabla f\|_{L^1(\mathbb{S}^{d})}.$$
\end{theorem}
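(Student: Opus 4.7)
The plan is to reduce the problem to a weighted one-dimensional inequality via polar symmetry and then, in the spirit of the $\mathbb{S}^1$ strategy indicated by the authors, exploit the fact that $\widetilde{\mathcal{M}}f$ has no strict local maxima in the detachment set $\{\widetilde{\mathcal{M}}f > |f|\}$.

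By a standard approximation argument I would first reduce to $f \geq 0$ being Lipschitz and polar, so that $\widetilde{\mathcal{M}}f$ is continuous and pointwise arguments are available. Since $\widetilde{\mathcal{M}}$ commutes with the rotations fixing $\mathbf{e}$, $\widetilde{\mathcal{M}}f$ is also polar; writing $f(\xi) = F(\theta)$ and $\widetilde{\mathcal{M}}f(\xi) = G(\theta)$ with $\theta = \arccos(\xi \cdot \mathbf{e}) \in [0,\pi]$, the polar decomposition of the surface measure $\sigma$ reduces the target inequality to
\begin{equation*}
\int_0^\pi |G'(\theta)|\, \sin^{d-1}\theta\, \d \theta \;\lesssim_d\; \int_0^\pi |F'(\theta)|\, \sin^{d-1}\theta\, \d \theta.
\end{equation*}
Next I would establish the structural fact that $G$ admits no strict local maximum in $\{G > F\} \subset [0,\pi]$. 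At a candidate local maximum $\xi_0$, a compactness argument produces an extremizing closed ball $\overline{\mathcal{B}_r(\zeta)}$ with $\xi_0 \in \overline{\mathcal{B}_r(\zeta)}$ realizing $\widetilde{\mathcal{M}}f(\xi_0)$. Sliding $\mathcal{B}_r(\zeta)$ slightly along the meridian through $\xi_0$, toward the side of the higher average, yields a nearby point with strictly larger $\widetilde{\mathcal{M}}f$, contradicting local maximality. This is the higher-dimensional polar version of the Aldaz--P\'{e}rez L\'{a}zaro $\mathbb{S}^1$ argument already invoked by the authors.

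With this structural fact in hand, I would use a coarea-type identity to rewrite both sides of the weighted inequality as $\int_0^\infty N_t^H\, \d t$ for $H \in \{G, F\}$, where $N_t^H := \sum_{\theta \in \partial\{H > t\}} \sin^{d-1}\theta$. The no-local-max property forces every connected component of $\{G > t\}$ to contain a strict local maximum of $G$, which lies in the contact set $\{G = F\} \cap \{G > t\} \subset \{F > t\}$; hence every component of $\{G > t\}$ contains at least one component of $\{F > t\}$. To promote this qualitative inclusion into the pointwise-in-$t$ estimate $N_t^G \lesssim_d N_t^F$, at each boundary point of a $\{G > t\}$-component lying in the detachment set I would analyze the extremizing spherical cap: it must reach into an enclosed $\{F > t\}$-component, and spherical-cap geometry should bound the boundary weight $\sin^{d-1}\theta$ of the $G$-component by a dimensional multiple of the weighted boundary of the enclosed $F$-component.

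The crux, and main obstacle, is this last weighted matching step. Qualitative containment is easy, but upgrading it to a quantitative comparison requires a careful analysis of how the extremizing spherical cap at a boundary point of $\{G > t\}$ interacts with the contained components of $\{F > t\}$, and the most delicate situation is when a component of $\{G > t\}$ crosses or surrounds the equator $\theta = \pi/2$ (where $\sin^{d-1}\theta$ is largest) while its enclosed $\{F > t\}$-components sit near the poles. The absence of scale invariance, which simplifies Luiro's Euclidean radial argument, is precisely what makes this step genuinely new on the sphere, and it is where I expect the bulk of the technical work to lie.
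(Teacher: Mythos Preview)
Your structural claim that $G = \widetilde{\mathcal{M}}f$ has no strict local maximum in the detachment set is correct and not hard: every point of an optimal ball $\overline{\mathcal{B}} \in {\bf B}_{\xi_0}$ satisfies $\widetilde{\mathcal{M}}f \geq \widetilde{\mathcal{M}}f(\xi_0)$, and the polar angles of points in $\overline{\mathcal{B}}$ fill a nontrivial interval containing $\theta(\xi_0)$. The coarea rewriting is also fine. The genuine gap is the last step: the pointwise-in-$t$ bound $N_t^G \lesssim_d N_t^F$ is \emph{false}, so no amount of spherical-cap geometry will supply it. For $d=2$, take $f$ a polar Lipschitz bump with $f\equiv 1$ on $\{\theta \leq \varepsilon\}$ and $f\equiv 0$ on $\{\theta \geq 2\varepsilon\}$. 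For $4\varepsilon < \theta_0 < \pi/2$ one checks $\widetilde{\mathcal{M}}f(\theta_0) \simeq \varepsilon^2/\theta_0^2$, so at the level $t \simeq \varepsilon$ the set $\{G > t\}$ reaches out to $\theta_0 \simeq \sqrt{\varepsilon}$ and $N_t^G \simeq \sin\theta_0 \simeq \sqrt{\varepsilon}$, while $\{F > t\} \subset (0, 2\varepsilon)$ gives $N_t^F \lesssim \varepsilon$. Hence $N_t^G/N_t^F \gtrsim \varepsilon^{-1/2}$, which is unbounded. This is precisely the scenario you flagged as delicate (a $\{G>t\}$-component stretching far from the pole while the enclosed $\{F>t\}$-component stays near it), and it is fatal to a pointwise level-set proof rather than merely technical; the integrated inequality survives only because the bad values of $t$ occupy an interval of length $\simeq \varepsilon$.

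The paper does not attempt any level-set comparison. It works instead with pointwise derivative identities for $\widetilde{\mathcal{M}}f$ at points of the detachment set: one identity comes from rotating the optimal ball (Lemma~\ref{Lem4 - inner integral sphere}), another from varying its radius while keeping $\xi$ on the boundary (Lemma~\ref{Lem_10_prep}). Playing these two representations against each other yields, after some spherical trigonometry, the weighted pointwise estimate of Lemma~\ref{crucial_lemma_large_radii},
\[
\big|\nabla\widetilde{\mathcal{M}}f(\xi)\big| \ \lesssim_d \ \frac{1}{\theta(\xi)} \intav{\mathcal{B}} |\nabla f(\eta)|\,\theta(\eta)\, \d\sigma(\eta) \ + \ \frac{r\,\theta(\zeta)}{\theta(\xi)} \intav{\mathcal{B}} |\nabla f(\eta)|\, \d\sigma(\eta),
\]
valid when the optimal ball $\mathcal{B} = \mathcal{B}_r(\zeta)$ sits in a small polar cap. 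Combined with a separate treatment of an auxiliary small-radius operator $\widetilde{\mathcal{M}}^I$ (Proposition~\ref{Prop_M_aux}, where the no-local-max idea \emph{is} used, but inside a dyadic-in-$\theta$ decomposition \`a la Luiro rather than via level sets) and a Fubini argument, this is what closes the proof. The no-local-max property by itself is far too weak to control a weighted variation.
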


This is the analogue on the sphere $\mathbb{S}^{d}$ of Luiro's result  \cite{Lu2} for radial functions in the Euclidean space. The proof we present below follows broadly the strategy outlined by Luiro \cite{Lu2}. However, due to the different geometry, several nontrivial technical points arise along the proof and must be considered carefully. A good example that such difficulties cannot be underestimated is Lemma \ref{crucial_lemma_large_radii} below, one of the core results used in our proof of Theorem \ref{Thm2-sphere}. As in the case of $\R^d$, the analogue of Theorem \ref{Thm2-sphere} for the centered Hardy-Littlewood maximal operator ${\mathcal{M}}$ on $\mathbb{S}^{d}$ is an open problem.

\subsection{Maximal operators of convolution type on $\mathbb{S}^{d}$} We now treat two important cases of maximal operators of convolution type on the sphere: the Poisson maximal operator and the heat flow maximal operator.  We briefly recall the basic definitions and refer the reader to \cite[Section 1.4]{CFS} for additional details.

\subsubsection{Poisson maximal function on $\mathbb{S}^{d}$} Let $0\leq \rho < 1$ and let $\xi, \eta \in \mathbb{S}^{d}$. We define the Poisson kernel $\mathcal{P}$ on the sphere by 
\begin{equation*}\label{Poisson_kernel_sphere}
\mathcal{P}(\xi,\eta,\rho)=\frac{1-\rho^2}{ \kappa_{d}\,|\rho \xi-\eta|^d} = \frac{1-\rho^2}{ \kappa_{d} \,(\rho^2 - 2\rho\, \xi\cdot \eta + 1)^{d/2}}\,,
\end{equation*} 
with $\kappa_{d} = \sigma(\mathbb{S}^{d})$ being the total surface area of $\mathbb{S}^{d}$. If $u_0\in L^1(\mathbb{S}^{d})$ we let $u(\xi,\rho)=u(\rho\xi)$ be the function defined on the unit $(d+1)$-dimensional open ball $B_1 \subset \R^{d+1}$ by
\begin{equation*}
u(\xi,\rho)= \int_{\mathbb{S}^{d}}\mathcal{P}(\xi,\eta, \rho)\,|u_0(\eta)|\,\d\sigma(\eta)\,,
\end{equation*}
and consider the associated maximal function
\begin{equation}\label{Max_Poisson_sphere}
u^*(\xi)=\sup_{0\leq\rho<1} u(\xi,\rho).
\end{equation}
Observe that $u \in C^{\infty}(B_1)$ and solves the Dirichlet problem
\begin{equation*}
\left\{\begin{array}{ll}
 \Delta u = 0 & {\rm in} \ B_1\,; \\
\displaystyle\lim_{\rho \rightarrow 1^-}{u(\xi,\rho)}=|u_0(\xi)| & \mathrm{for~a.e.}~\xi\in \mathbb{S}^{d}.
\end{array}\right.
\end{equation*}

\subsubsection{Heat flow maximal function on $\mathbb{S}^{d}$} Let $\big\{Y_n^{\ell}\big\}$, $\ell = 1,2,\ldots, {\rm dim}\,{H}_n^{d+1}$, be an orthonormal basis of the space ${H}_n^{d+1}$ of spherical harmonics of degree $n$ in the sphere $\mathbb{S}^d$. For $t \in (0,\infty)$ and $\xi, \eta \in \mathbb{S}^{d}$ we define the heat kernel $\mathcal{K}$ on the sphere (see \cite[Lemma 1.2.3, Theorem 1.2.6 and Eq. 7.5.5]{Dai_Xu}) by
\begin{align*}
\mathcal{K}(\xi,\eta,t) & = \sum_{n=0}^{\infty} e^{-t n (n+d -1)} \sum_{\ell=1}^{{\rm dim}\,{H}_n^{d+1}} Y_n^{\ell}(\xi)Y_n^{\ell}(\eta) = \sum_{n=0}^{\infty} e^{-t n (n+d -1)}\frac{(n +\lambda)}{\lambda} \,C_n^{\lambda}(\xi \cdot \eta),
\end{align*}
where $\lambda = \frac{d-1}{2}$ and $t \mapsto C^{\beta}_n(t)$, for $\beta >0$, are the {\it Gegenbauer polynomials} defined in terms of the generating function 
\begin{equation*}
(1 - 2rt + r^2)^{-\beta} = \sum_{n=0}^{\infty} C^{\beta}_n(t)\, r^n.
\end{equation*}
If $u_0\in L^1(\mathbb{S}^{d})$ we consider
\begin{equation*}
u(\xi,t)= \int_{\mathbb{S}^{d}}\mathcal{K}(\xi,\eta, t)\,|u_0(\eta)|\,\d\sigma(\eta)\,,
\end{equation*}
and consider the associated maximal function
\begin{equation}\label{Max_heat_sphere}
u^*(\xi)=\sup_{t >0} u(\xi,t).
\end{equation}
Note that $u$ is a smooth function on $\mathbb{S}^{d} \times (0,\infty)$ and solves the heat equation
\begin{equation*}
\left\{\begin{array}{ll}
\partial_tu - \Delta u = 0 & {\rm in} \ \mathbb{S}^d \times (0,\infty)\,; \\
 \displaystyle\lim_{t \rightarrow 0^+}{u(\xi,t)}=|u_0(\xi)| & \mathrm{for~a.e.}~\xi\in \mathbb{S}^{d}.
\end{array}\right.
\end{equation*}
\subsubsection{Gradient bounds} We note that the smooth kernels $\mathcal{P}$ and $\mathcal{K}$ depend only on $d(\xi, \eta)$ and are decreasing with respect to this distance. If we fix one of these two parameters, they have integral $1$ on $\mathbb{S}^{d}$ and are approximate identities as $\rho \to 1^-$ and $t \to 0^+$, respectively. The discussion on the heat kernel can be found in \cite[Chapter III, Section 2]{SY}. Also, from \cite[Chapter 2, Theorem 2.3.6]{Dai_Xu}, note that the associated maximal functions $u^*$ are dominated by the Hardy-Littlewood maximal function, that is
\begin{equation}\label{20200625_11:06}
u^*(\xi) \leq  \mathcal{M}u_0(\xi) \leq  \widetilde{\mathcal{M}}u_0(\xi).
\end{equation}
Our third result establishes the following.
\begin{theorem}\label{Thm_conv_spheres}
Let $u^*$ be the Poisson maximal function given by \eqref{Max_Poisson_sphere} or the heat flow maximal function given by \eqref{Max_heat_sphere}. If $u_0 \in W^{1,1}(\mathbb{S}^d)$ is a polar function, then $u^*$ is weakly differentiable and 
$$\|\nabla u^*\|_{L^1(\mathbb{S}^d)} \lesssim_d \|\nabla u_0\|_{L^1(\mathbb{S}^d)}.$$
\end{theorem}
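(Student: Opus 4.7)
The plan is to transport the proof of Theorem~\ref{Thm1} to the spherical setting, with Theorem~\ref{Thm2-sphere} replacing Luiro's bound and the subharmonicity results of \cite{CFS, CS} replaced by their natural spherical analogues. First, a standard mollification by a polar-preserving kernel (for instance $\mathcal{K}(\cdot,\cdot,\varepsilon)$, which depends only on $\xi\cdot\eta$) combined with the lower semicontinuity of the total variation reduces the problem to smooth, nonnegative, polar $u_0$. For such data, the semigroup property of the Poisson/heat flow on $\mathbb{S}^d$ together with the maximum principle for harmonic/caloric functions yields that $u^*$ is continuous and that $u^*$ is subharmonic on the open detachment set $D := \{\xi \in \mathbb{S}^d : u^*(\xi) > u_0(\xi)\}$, in direct analogy with \cite{CFS, CS}. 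The bound \eqref{20200625_11:06} provides the pointwise trap $u_0 \leq u^* \leq \widetilde{\mathcal{M}}u_0$, while $u^* = u_0$ on $\mathbb{S}^d\setminus D$.

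Polar invariance of the kernels forces $u^*$ to be polar as well, so write $u^*(\xi) = g(\theta)$, $u_0(\xi) = h(\theta)$ and $\widetilde{\mathcal{M}}u_0(\xi) = H(\theta)$ with $\theta = d(\xi,\mathbf{e}) \in [0,\pi]$. One has
\begin{equation*}
\|\nabla u^*\|_{L^1(\mathbb{S}^d)} = \kappa_{d-1} \int_0^{\pi} |g'(\theta)|\,\sin^{d-1}\theta\,\d\theta,
\end{equation*}
with analogous expressions for $u_0$ and $\widetilde{\mathcal{M}}u_0$, and the spherical Laplacian of a polar function reads $\sin^{1-d}\theta\cdot(\sin^{d-1}\theta\cdot g'(\theta))'$. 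Subharmonicity on $D$ thus translates into the 1D statement that $w(\theta) := \sin^{d-1}\theta\cdot g'(\theta)$ is nondecreasing on each connected component of the pullback $D' \subset (0,\pi)$ of $D$. Writing $D' = \bigcup_k (a_k, b_k)$, the endpoint matching $g = h$ on $\partial D$ together with the monotonicity of $w$ controls $\int_{a_k}^{b_k} |g'|\sin^{d-1}\theta\,\d\theta$ in terms of the endpoint values of $w$ and the corresponding weighted variation of $h$ on $(a_k, b_k)$. Summing over $k$, adding the contribution on $\{g = h\}$ (where $g' = h'$ a.e.), and using the pointwise trap to dominate the boundary values of $w$ by the analogous quantities for $H$, an application of Theorem~\ref{Thm2-sphere} to $\widetilde{\mathcal{M}}u_0$ closes the estimate with a constant depending only on $d$. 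Weak differentiability of $u^*$ in $W^{1,1}(\mathbb{S}^d)$ then follows from the uniform gradient bound along the approximation sequence together with the continuity of $u^*$.

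The main obstacle will be executing the variation estimate on the components of $D'$ cleanly. The weight $\sin^{d-1}\theta$ degenerates at the poles $\theta = 0,\pi$, so components abutting a pole require a separate treatment, using the boundedness and polar regularity of $u^*$ near the poles. One must also handle the sign of $g'$ on each component carefully: the monotonicity of $w$ allows at most one sign change on $(a_k, b_k)$, but converting this into a sharp inequality for $\int |w|\,\d\theta$ requires the endpoint comparisons with $h$ and $H$ to be executed uniformly in $k$ and to survive the mollification limit. These are the spherical analogues of the technicalities already handled in \cite{Lu2} and in the proof of Theorem~\ref{Thm2-sphere}, but the non-flat geometry injects enough new complication that this step will likely constitute the bulk of the work.
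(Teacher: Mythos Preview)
Your overall strategy matches the paper's: reduce to Lipschitz data, exploit subharmonicity of $u^*$ on the detachment set, trap $u^*$ between $u_0$ and $\widetilde{\mathcal{M}}u_0$ via \eqref{20200625_11:06}, and close with Theorem~\ref{Thm2-sphere}. The decomposition into components of the polar detachment set and the integration-by-parts manipulation you allude to (which is what ``monotonicity of $w$'' ultimately feeds into) are exactly what the paper carries out.

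However, you have misdiagnosed the main obstacle. The degeneration of $\sin^{d-1}\theta$ at the poles is \emph{not} the difficulty; it is handled just as the vanishing of $r^{d-1}$ at the origin was in the proof of Theorem~\ref{Thm1}. The genuine new obstruction on $\mathbb{S}^d$ is that the weight derivative $(d-1)\cos\theta\,\sin^{d-2}\theta$ changes sign at $\theta=\pi/2$. In the Euclidean argument the integration by parts on each component produces a term $\int u^*(\theta)\,(\text{weight})'\,\d\theta$ which is controlled via $u_0 \leq u^* \leq \widetilde{M}u_0$ precisely because the weight derivative has a fixed sign. On the sphere this forces you to treat $(0,\pi/2)$ and $(\pi/2,\pi)$ separately, and when you sum over components and integrate the resulting $\int_0^{\pi/2}(\widetilde{\mathcal{M}}u_0-u_0)\,\partial_\theta(\sin\theta)^{d-1}\,\d\theta$ by parts globally, a boundary contribution $\widetilde{\mathcal{M}}u_0(\tfrac{\pi}{2})-u_0(\tfrac{\pi}{2})$ survives (and reappears when handling the component containing $\pi/2$). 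This quantity does not vanish and must be controlled by $\|\nabla u_0\|_{L^1(\mathbb{S}^d)}$ via a separate argument; the paper isolates this as Lemma~\ref{key_lemma_conv_sphere_heat}. Your phrase ``using the pointwise trap to dominate the boundary values of $w$ by the analogous quantities for $H$'' does not substitute for this step: the trap $h\leq g\leq H$ controls values of $g$, not of $g'$ or $w=g'\sin^{d-1}\theta$, and there is no direct comparison between $w$ and $H'\sin^{d-1}\theta$ at the endpoints of a component.

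A smaller point: mollification plus lower semicontinuity of the total variation would only yield a BV bound, not weak differentiability in $W^{1,1}$. The paper instead proves the Lipschitz case directly and, for general $u_0$, uses the semigroup property of the kernel to write $u^*=u_\varepsilon^*$ locally on the detachment set (with $u_\varepsilon$ Lipschitz), obtaining local Lipschitz continuity of $u^*$ there and concluding via Fatou as in \S\ref{General case_Convolution}.
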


\subsection{A word on notation} In what follows we write $A \lesssim_d B$ if $A \leq C B$ for a certain constant $C>0 $ that may depend on the dimension $d$. We say that $A  \simeq_d B$ if $A \lesssim_d B$ and $B \lesssim_d A$. If there are other parameters of dependence, they will also be indicated. The characteristic function of a generic set $H$ is denoted by $\chi_H$. In the few occasions that we write universal constants $C_d$ in Section \ref{Section 4}, these may change from line to line.

\section{Proof of Theorem \ref{Thm1}}

In this section we prove Theorem \ref{Thm1}. Without loss of generality we may assume that $u_0$ is {\it real-valued and nonnegative} (or $+\infty$). Assume also that $d \geq 2$, since the result is already known for dimension $d=1$ from \cite[Theorem 1]{CFS}. Throughout the proof below, with a slight abuse of notation, we identify radial functions of the variable $x \in \R^d$ with their one-dimensional versions of the variable $r \in (0,\infty)$, with the understanding that $r = |x|$. Naturally, if $u_0$ is radial, the maximal function $u^*$ is also radial. In what follows, variables $r,s,t,\tau, a,b$ will be one-dimensional, whereas the variable $x$  is always reserved for $\R^d$. We recall the fact \cite[Chapter III, Theorem 2]{S} that
\begin{equation}\label{Stein_bound}
u^*(x)   \leq Mu_0(x) \leq \widetilde{M}u_0(x)
\end{equation}
for every $x \in \R^d$. 

\subsection{Lipschitz case} Let us first assume that our initial datum $u_0$ is a Lipschitz function. In this case $u^*$ is also Lipschitz. Reducing matters to radial variables, we claim the following:
\begin{equation}\label{Feb25_12:42}
\int_0^{\infty} \big|(u^*)'(r)\big|\, r^{d-1}\, \d r \leq \int_0^{\infty} \big|u_0'(r)\big|\, r^{d-1}\, \d r + \int_0^{\infty} \big| \big(\widetilde{M}u_0\big)'(r)\big|\, r^{d-1}\, \d r.
\end{equation}
Once we have established \eqref{Feb25_12:42}, the theorem follows easily by Luiro's result \cite{Lu2}, that bounds the third integral in terms of the second. 

\subsubsection*{Step 1: Partial control by the uncentered maximal function}Let us define the radial detachment set (excluding the origin)
\begin{equation}\label{Def_A}
A_d = \big\{ x \in \R^d \setminus \{0\} \ : \  u^*(x) > u_0(x)\big\}.
\end{equation}
The one-dimensional radial version of this set will be denoted by
\begin{equation*}
A_1 = \{ |x|  \ : \ x \in A_d \}.
\end{equation*} 
These are open sets and from \cite[Lemma 7]{CFS} we know that $u^*$ is subharmonic on $A_d$. Let us write 
\begin{equation}\label{disjoint union}
A_1 = \bigcup_{i=1}^{\infty} (a_i, b_i)
\end{equation}
as a countable union of disjoint open intervals. Let $(a,b)$ denote a generic interval $(a_i,b_i)$ of this union. If $u^*$ had a strict local maximum in $(a,b)$ (that is, a point $t_0 \in (a,b)$ for which there exist $c$ and $d$ with $a < c < t_0 < d < b$ such that $u^*(r) \leq u^*(t_0)$ for $r \in (c,d)$ and $u^*(c), u^*(d) < u^*(t_0)$), we could then take the average of $u^*$ over the ball in $\R^d$ centered at $x_0$, with $|x_0| = t_0$, and radius $\min\{|t_0 -c|, |t_0 -d|\}$ to reach a contradiction to the subharmonicity of $u^*$ in $A_d$. Therefore $u^*$ has no strict local maximum in $(a,b)$ and there exists $\tau$ with $a \leq \tau \leq b$ such that $u^*$ is non-increasing in $[a, \tau]$ and non-decreasing in $[\tau, b]$. We then have $(u^*)'(t) \leq 0$ a.e. in $a < t < \tau$, and $(u^*)'(t) \geq 0$ a.e. in $\tau < t < b$. 

\smallskip

Let us first consider the case $0 < a < b < \infty$. Using \eqref{Stein_bound} and integration by parts we get
\begin{align}
\int_a^{b} \big|(u^*)'(r)\big|\,&  r^{d-1}\, \d r = - \int_a^{\tau} (u^*)'(r)\, r^{d-1}\, \d r + \int_\tau^{b} (u^*)'(r)\, r^{d-1}\, \d r \nonumber\\
& = u^*(a)\, a^{d-1} + u^*(b)\, b^{d-1} - 2\,u^*(\tau)\, \tau^{d-1} \nonumber\\
&  \ \ \ \ \ \ \ + (d-1)\int_a^{\tau} u^*(r)\, r^{d-2}\, \d r - (d-1)\int_\tau^{b} u^*(r)\, r^{d-2}\, \d r \nonumber\\
& \leq  u_0(a)\, a^{d-1} + u_0(b)\, b^{d-1} - 2\,u_0(\tau)\, \tau^{d-1} \nonumber\\
&  \ \ \ \ \ \ \ + (d-1) \int_a^{\tau} \widetilde{M}u_0(r)\, r^{d-2}\, \d r - (d-1) \int_\tau^{b} u_0(r)\, r^{d-2}\, \d r \nonumber\\
& =  u_0(a)\, a^{d-1}  - \,u_0(\tau)\, \tau^{d-1}\nonumber\\
&  \ \ \ \ \ \ \ + (d-1) \int_a^{\tau} \widetilde{M}u_0(r)\, r^{d-2}\, \d r + \int_\tau^{b} u_0'(r)\, r^{d-1}\, \d r\nonumber\\
& \leq \int_a^{b} \big|u_0'(r)\big|\, r^{d-1}\, \d r + (d-1) \int_a^{\tau} \widetilde{M}u_0(r)\, r^{d-2}\, \d r. \label{Main_chain_pf}
\end{align}
The last inequality holds since 
\begin{equation*}
 u_0(a)\, a^{d-1}  - \,u_0(\tau)\, \tau^{d-1} \leq - \int_a^{\tau} u_0'(r)\, r^{d-1}\, \d r \leq  \int_a^{\tau} \big|u_0'(r)\big|\, r^{d-1}\, \d r.
\end{equation*}

\smallskip

If $b = \infty$, since $u^* \in L^{1,\infty}(\R^d)$ we must have $\tau = \infty$ as well (i.e. $u^*$ non-increasing in the interval $(a,\infty)$) and a simple limiting argument leads to inequality \eqref{Main_chain_pf} again. Note that $\lim_{r \to \infty} u_0(r)\,r^{d-1} = 0$ since $r \mapsto u_0(r)\,r^{d-1}$ is locally Lipschitz with integrable derivative in $(0,\infty)$. 

\smallskip 

Finally, if $a=0$, the proof of \eqref{Main_chain_pf} follows as above noting that $\lim_{r \to 0} u^*(r)\,r^{d-1} = 0$ (for $d \geq 2$).

\smallskip

If we add up \eqref{Main_chain_pf} over all the intervals $(a_i, b_i)$ of the disjoint union \eqref{disjoint union} we find
\begin{equation*}
\int_{A_1} \big|(u^*)'(r)\big|\,  r^{d-1}\, \d r \leq \int_{A_1} \big|u_0'(r)\big|\, r^{d-1}\, \d r + (d-1) \int_0^{\infty} \widetilde{M}u_0(r)\, r^{d-2}\, \d r\,,
\end{equation*}
which then leads to (note that in $A_{1}^c$ we have $u^* = u_0$, and hence $(u^*)' = u_0'$ a.e. in $A_{1}^c$).
\begin{equation}\label{Feb25_15:41}
\int_0^{\infty} \big|(u^*)'(r)\big|\,  r^{d-1}\, \d r \leq \int_0^{\infty} \big|u_0'(r)\big|\, r^{d-1}\, \d r + (d-1) \int_0^{\infty} \widetilde{M}u_0(r)\, r^{d-2}\, \d r.
\end{equation}

\subsubsection*{Step 2: Control of weighted norms} As $r \mapsto \widetilde{M}u_0(r)$ is Lipschitz and its derivative is integrable (in fact $\big(\widetilde{M}u_0\big)'\!(r)\, r^{d-1} \in L^1(0,\infty)$ from Luiro's work \cite{Lu2}) we have that $\lim_{r \to \infty} \widetilde{M}u_0(r)$ exists and it is equal to $0$ since $\widetilde{M}u_0 \in L^{1,\infty}(\R^d)$. Then 
$$ \widetilde{M}u_0(r) = - \int_r^{\infty} \big(\widetilde{M}u_0\big)'\!(t)\, \d t$$ 
and 
\begin{align}\label{Feb25_15:42}
\begin{split}
(d-1) \int_0^{\infty} \widetilde{M}u_0(r)\, r^{d-2}\, \d r & = (d-1) \int_0^{\infty}\left( \int_r^{\infty} - \big(\widetilde{M}u_0\big)'\!(t)\, \d t\right) r^{d-2}\, \d r  \\
& \leq (d-1) \int_0^{\infty} \left(\int_r^{\infty}  \big|\big(\widetilde{M}u_0\big)'\!(t)\big|\, \d t\right) r^{d-2}\, \d r  \\
& =  (d-1) \int_0^{\infty} \int_0^{t}  r^{d-2} \, \big|\big(\widetilde{M}u_0\big)'\!(t)\big|\, \d r \,\d t \\
& = \int_0^{\infty}  \big|\big(\widetilde{M}u_0\big)'\!(t)\big|\, t^{d-1} \,\d t.
\end{split}
\end{align}
Finally, we combine \eqref{Feb25_15:41} and \eqref{Feb25_15:42} to arrive at \eqref{Feb25_12:42}, concluding the proof in this case.

\subsection{General case} \label{General case_Convolution}
Let us first record a basic lemma about radial functions and weak derivatives. In what follows, when we say that a function $f$ is weakly differentiable in a certain domain $\Omega \subset \R^d$, it is naturally understood that $f$ and its weak derivatives are locally integrable in such a domain.

\begin{lemma}\label{Lem_radialization}.
\begin{itemize}
\item[(i)] A radial function $f(x)$ is weakly differentiable in $\R^d \setminus \{0\}$ if and only if its radial restriction $f(r)$ is weakly differentiable in $(0,\infty)$. In this case, the weak gradient $\nabla f$ of $f(x)$ and the weak derivative $f'$ of $f(r)$ are related by $\nabla f(x) = f'(|x|)\frac{x}{|x|}$. 

\smallskip

\item[(ii)] In the situation above, if $f(x)$ and $\nabla f(x)$ are locally integrable in a neighborhood of the origin, then $f$ is weakly differentiable in $\R^d$.
\end{itemize}
\end{lemma}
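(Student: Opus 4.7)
The plan is to handle (i) via the polar-coordinate decomposition $\R^d \setminus \{0\} \cong (0,\infty) \times \mathbb{S}^{d-1}$, and (ii) by a radial cutoff argument that exploits the symmetry of $f$ to remove the singularity at the origin.

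For the reverse direction of (i), I would approximate $g$ in $W^{1,1}_{\mathrm{loc}}((0,\infty))$ by smooth $g_n$, set $f_n(x) = g_n(|x|)$ (smooth on $\R^d \setminus \{0\}$, with classical gradient $g_n'(|x|)\,x/|x|$), and pass to the limit in $L^1_{\mathrm{loc}}(\R^d \setminus \{0\})$ using the change of measure $\d x = r^{d-1}\,\d r\,\d\sigma(\omega)$. For the forward direction, the ACL characterization of Sobolev functions (combined with a rotation that sends a radial line to a coordinate axis) shows that $r \mapsto f(r\omega) = g(r)$ is absolutely continuous on compact subintervals of $(0,\infty)$ for a.e. $\omega \in \mathbb{S}^{d-1}$; in particular $g \in W^{1,1}_{\mathrm{loc}}((0,\infty))$. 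The already-proved reverse direction then applies, and uniqueness of the weak gradient identifies $\nabla f(x)$ with $g'(|x|)\,x/|x|$.

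For (ii), given $\phi \in C_c^\infty(\R^d)$, let $\eta_\varepsilon(x) := \eta(|x|/\varepsilon)$ be a smooth radial cutoff with $\eta \equiv 0$ on $[0,1]$ and $\eta \equiv 1$ on $[2,\infty)$, so that $\phi\eta_\varepsilon \in C_c^\infty(\R^d \setminus \{0\})$. Applying the weak-derivative identity in $\R^d \setminus \{0\}$ to the test function $\phi\eta_\varepsilon$ and rearranging yields
\[
\int f\,\partial_j \phi\,\d x \;=\; -\int F_j\,\phi\,\eta_\varepsilon\,\d x \;+\; \int f\,(1-\eta_\varepsilon)\,\partial_j\phi\,\d x \;-\; \int f\,\phi\,\partial_j\eta_\varepsilon\,\d x,
\]
where $F_j$ denotes the weak partial derivative of $f$ on $\R^d \setminus \{0\}$, assumed to lie in $L^1_{\mathrm{loc}}(\R^d)$. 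As $\varepsilon \to 0$ the first two terms converge to $-\int F_j\phi$ and to $0$ respectively by dominated convergence (using $F_j\phi \in L^1(\R^d)$ and $f \in L^1_{\mathrm{loc}}(\R^d)$).

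The main obstacle is the third term: the naive bound $\|\partial_j\eta_\varepsilon\|_\infty \lesssim 1/\varepsilon$ would only give $\lesssim \varepsilon^{-1}\int_{B_{2\varepsilon}}|f|\,\d x$, which is not $o(1)$ for a generic $L^1_{\mathrm{loc}}$ function. The key observation is that $\partial_j\eta_\varepsilon(x) = \varepsilon^{-1}\eta'(|x|/\varepsilon)\,x_j/|x|$ is odd in $x_j$ while $f$ is radial, so $\int f\,\partial_j\eta_\varepsilon\,\d x = 0$. Subtracting this zero and using $|\phi(x)-\phi(0)| \leq \|\nabla\phi\|_\infty |x| \leq 2\varepsilon\|\nabla\phi\|_\infty$ on $\{\varepsilon \leq |x| \leq 2\varepsilon\}$ cancels the $1/\varepsilon$ factor and leaves $\big|\int f\,\phi\,\partial_j\eta_\varepsilon\,\d x\big| \lesssim \|\nabla\phi\|_\infty \int_{B_{2\varepsilon}}|f|\,\d x \to 0$. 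This yields $\int f\,\partial_j\phi = -\int F_j\,\phi$ for every $\phi \in C_c^\infty(\R^d)$, i.e.\ weak differentiability of $f$ on $\R^d$.
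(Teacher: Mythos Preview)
Your argument for (ii) is essentially identical to the paper's: both introduce a radial cutoff, exploit the oddness of $\partial_j\eta_\varepsilon$ against the evenness of $f$ to kill the constant term $\phi(0)$, and then use $|\phi(x)-\phi(0)|\lesssim |x|$ to absorb the $\varepsilon^{-1}$ coming from the cutoff.

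For (i) your route differs from the paper's. The paper handles both implications by direct test-function manipulations in polar coordinates: for the forward direction it tests $f$ against radial $\varphi$ and unwinds the divergence identity $\sum_i \partial_{x_i}\big(\tfrac{x_i}{|x|}\varphi\big)=\tfrac{d-1}{|x|}\varphi+\partial_r\varphi$ to obtain the one-dimensional integration by parts for $g$; for the reverse direction it replaces $g$ by its absolutely continuous representative and carries out the polar integration by parts explicitly for a generic (non-radial) test function. Your approach---smooth approximation for the reverse direction, ACL for the forward---is more conceptual and avoids these explicit computations; the paper's is more self-contained.

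One caution on your ACL step: after rotating a radial direction $\omega$ to $e_1$, the radial half-line becomes the $x_1$-axis itself, i.e.\ the single line $(x_2,\dots,x_d)=0$, which may lie in the null exceptional set of the ACL characterization; the phrase ``for a.e.\ $\omega$'' does not rescue this, since all radial lines through $0$ form a measure-zero family in the affine Grassmannian. The easy fix is to use a nearby parallel line: for a.e.\ $(a_2,\dots,a_d)$ the map $x_1\mapsto f(x_1,a_2,\dots,a_d)=g\big(\sqrt{x_1^2+c^2}\big)$ is absolutely continuous on compacta, and the smooth change of variable $r=\sqrt{x_1^2+c^2}$ transfers this to $g$ on $(c,\infty)$; letting $c\downarrow 0$ along a suitable sequence gives the claim. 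Alternatively, note that the polar map is locally bi-Lipschitz away from $0$, so $(r,\omega)\mapsto g(r)$ lies in $W^{1,1}_{\mathrm{loc}}\big((0,\infty)\times\mathbb{S}^{d-1}\big)$, and Fubini in the $r$-direction finishes.
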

\begin{proof} This result is most certainly standard but we could not find an exact explicit reference. We then provide a brief proof for completeness.

\smallskip

\noindent{{\it Part} (i).} Assume that $f(x)$ is weakly differentiable in $\R^d \setminus \{0\}$ and let $\nabla f $ be its weak gradient. Let $\varphi \in C^{\infty}_c(\R^d \setminus \{0\})$ be a radial test function. Letting $r = |x|$ we have, by definition,
\begin{align}\label{20200629_09:40}
\begin{split}
\int_{\R^d \setminus \{0\}} f(x) \left( \frac{(d-1)}{|x|} \varphi(x) + \frac{\partial \varphi}{\partial r}(x)\right) \d x = \int_{\R^d \setminus \{0\}}  f(x) \left( \sum_{i=1}^{d}\frac{\partial}{\partial x_i} \left(\frac{x_i}{|x|} \varphi(x)\right) \right) \d x \\
=  - \int_{\R^d \setminus \{0\}} \left(\sum_{i=1}^{d}\frac{\partial f}{\partial x_i} \frac{x_i}{|x|} \varphi(x)\!\right)  \d x = - \int_{\R^d \setminus \{0\}}  \nabla f (x)\cdot \frac{x}{|x|} \,\,\varphi(x)\,\d x. 
\end{split}
\end{align}
Write $x = r\omega$, with $\omega \in \mathbb{S}^{d-1}$. Letting $\Phi(r) =  \varphi(r)\,r^{d-1}$, rewrite \eqref{20200629_09:40} in polar coordinates to get
\begin{equation*}
\sigma_{d-1}\big(\mathbb{S}^{d-1}\big) \int_{0}^{\infty} f(r)\,\Phi'(r)\,\d r = - \int_{0}^{\infty} \left( \int_{\mathbb{S}^{d-1}} (\nabla f (r\omega)) \cdot \omega \,\, \d \sigma_{d-1}(\omega)\right)\,\Phi(r) \,\d r.
\end{equation*}
This is the required integration by parts in $(0,\infty)$ for the generic test function $\Phi$.

\smallskip

Assume now that $f(r)$ is weakly differentiable in $(0,\infty)$. If $g$ is its weak derivative, then $f(r) - \int_{1}^{r} g(t)\,\d t$ has weak derivative zero and hence is constant a.e. in $(0,\infty)$. We can then modify $f$ on a set of measure zero so that $f$ is continuous in $(0,\infty)$; in fact absolutely continuous in each interval $[a,b] \subset (0,\infty)$. In particular, $f$ is differentiable a.e. and $g = f'$. The radial extension $f(x)$ is then continuous in $\R^d \setminus \{0\}$ and differentiable almost everywhere. Let us show that integration by parts holds, say, with respect to the first coordinate $x_1$. Write $x = (x_1, x_1, \ldots, x_d) = r \omega = (r\cos \theta, r (\sin \theta) \xi)$, with $r \in (0,\infty)$, $\omega \in \mathbb{S}^{d-1} \subset \R^d$, $0 \leq \theta \leq \pi$ and $\xi \in \mathbb{S}^{d-2} \subset \R^{d-1}$. Let $\psi \in C^{\infty}_c(\R^d \setminus \{0\})$ be a generic test function and consider
\begin{equation*}
\Psi(r) = \left(\int_{\mathbb{S}^{d-1}} \psi \,\, \frac{x_1}{|x|}  \,\, \d \sigma_{d-1}(\omega)\right)  r^{d-1} = \left(\int_0^{\pi} \left( \int_{\mathbb{S}^{d-2}} \psi  \,\, \d \sigma_{d-2}(\xi)\right) \cos \theta \, (\sin \theta)^{d-2} \,\d \theta \right) r^{d-1}.
\end{equation*}
Then 
\begin{align*}
\Psi'(r) &=  \left(\int_0^{\pi}  \int_{\mathbb{S}^{d-2}} \left(\frac{\partial \psi}{\partial r} \cos \theta  - \frac{\partial \psi}{\partial \theta} \frac{\sin \theta}{r} \right)  (\sin \theta)^{d-2} \,\d \sigma_{d-2}(\xi) \, \d \theta \right)\,r^{d-1}
\end{align*}
where an integration by parts in the variable $\theta$ was used. Using polar coordinates one now sees that 
\begin{align*}
\int_{\R^d \setminus \{0\}} f(x) \,\frac{\partial \psi}{\partial x_1}\, \d x = \int_0^{\infty} f(r) \, \Psi'(r) \,\d r  = - \int_0^{\infty} f'(r) \, \Psi(r) \,\d r = - \int_{\R^d \setminus \{0\}} \left(f'(|x|) \,\frac{x_1}{|x|}\right) \,\psi(x) \,\d x.
\end{align*}
This shows that $f(x)$ is weakly differentiable with weak gradient given by $\nabla f(x) = f'(|x|)\frac{x}{|x|}$.

\smallskip

\noindent{{\it Part} (ii).}  Let $\psi:\R^d \to \R$ be a smooth radial non-increasing function with $\psi \equiv 1$ on $\{|x| \leq 1\}$ and $\psi \equiv 0$ on $\{|x| \geq 2\}$. Let $\Psi_\alpha(x) = 1 -\psi(x/\alpha)$. Let $\phi \in C^{\infty}_c(\R^d)$ be any test function. Since we know that $f$ is weakly differentiable in $\R^d \setminus \{0\}$ we have, for any direction $i=1,2,\ldots,d$ (here we denote $\partial f / \partial x_i$ simply by $f_{x_i}$),
\begin{align}\label{Fev27_16:46}
\begin{split}
- \int_{\R^d}  f_{x_i}(x)\, (\phi \Psi_{\alpha})(x)\, \d x  &= \int_{\R^d} f(x)\,(\phi \Psi_{\alpha})_{x_i}(x)\, \d x\\
& = \int_{\R^d} f(x)\,\phi_{x_i} (x)\,\Psi_{\alpha}(x) \,\d x + \int_{\R^d} f(x)\,\phi(x)\,(\Psi_{\alpha})_{x_i} (x)\, \d x. 
\end{split}
\end{align} 
Note that the last integral takes place inside the ball of radius $2\alpha$. In this ball we have $\phi(x) = \phi(0) + R(x)$ with $|R(x)| \leq C\alpha$. Since $f(x)$ is even in the variable $x_i$ and $(\Psi_{\alpha})_{x_i} (x)$ is odd in the variable $x_i$ we get 
\begin{equation}\label{Fev27_16:47}
\int_{\R^d} f(x)(\Psi_{\alpha})_{x_i} (x)\, \d x = 0\,,
\end{equation}
and since $(\Psi_{\alpha})_{x_i} (x) = -\frac{1}{\alpha} \psi_{x_i} (x/\alpha)$ we find 
\begin{equation}\label{Fev27_16:48}
\int_{\R^d} f(x)R(x)(\Psi_{\alpha})_{x_i} (x)\, \d x \to  0
\end{equation}
as $\alpha \to 0$, since $f$ is locally integrable. Using \eqref{Fev27_16:47} and \eqref{Fev27_16:48} and the fact that $\nabla f$ is also locally integrable we may pass the limit as $\alpha \to 0$ in \eqref{Fev27_16:46} to find
$$ -\int_{\R^d}  f_{x_i}(x)\, \phi(x)\, \d x  = \int_{\R^d} f(x)\,\phi_{x_i} (x)\,\d x\,,$$
as desired.
\end{proof}

We now consider the case of general $u_0 \in W^{1,1}(\R^d)$ radial. We have seen in Lemma \ref{Lem_radialization} that its radial version $u_0(r)$ is weakly differentiable in $(0,\infty)$ and 
$$\int_0^{\infty} |u_0'(r)|\,r^{d-1}\,\d r <\infty.$$
In particular, after a possible redefinition on a set of measure zero, one can take $u_0(r)$ continuous in $(0,\infty)$ (in fact, absolutely continuous in each interval $[a,\infty)$ for $a>0$). This is equivalent to assuming that $u_0(x)$ is continuous in $\R^d\setminus\{0\}$.

\subsubsection*{Step 3: $u^*$ is continuous in $\R^d \setminus\{0\}$} With  $u_0(x)$ continuous in $\R^d\setminus\{0\}$, the detachment set $A_d$ defined in \eqref{Def_A} is open. Throughout the rest of this section let us write
$$u_\varepsilon (x):= u(x, \varepsilon)= \big(u_0*\varphi(\cdot,\varepsilon)\big)(x), \ \ x \in \R^d, \ \varepsilon >0.$$
We claim that $u^*$ is locally Lipschitz in $A_d$. In fact, if $x_0 \in A_d$, there exists $t_0 >0$ such that 
$$u^*(x_0) = u(x_0,t_0 )  > u(x_0).$$
From the continuity of $u(x,t)$, there exist a neighborhood $V$ of $x_0$ and an $\varepsilon_0 >0$ such that 
\begin{equation}\label{Feb27_15:21}
u^*(x) =\sup_{t> 0} u(x,t) =  \sup_{t> \varepsilon_0} u(x,t) = \sup_{t> 0} \big(u_{\varepsilon_0}*\varphi(\cdot,t)\big)(x) =: u_{\varepsilon_0}^*(x)
\end{equation}
for all $x \in V$. Note that in the third equality above we used the semigroup property of the family $\varphi(\cdot,t)$ (i.e. the fact that $\varphi(\cdot, t_1)*\varphi(\cdot, t_2) = \varphi(\cdot, t_1+t_2)$).  Since $u_{\varepsilon_0}$ is Lipschitz, we have that $u^* = u_{\varepsilon_0}^*$ is Lipschitz on $V$, which proves our claim. 

\smallskip

Writing $\R^d \setminus\{0\} = A_d \cup A_d^{c}$, we now need so show that $u^*$ is continuous at the points of $A_d^{c}$. Let $x_0 \in A_d^{c}$. If $x_0 \in {\rm int} (A_d^{c})$ we are done since $u^* = u_0$ is continuous in a neighborhood of $x_0$. Assume now that $x_0 \in A_d^{c} \setminus {\rm int} (A_d^{c})$ and that there exists a sequence $\{x_n\}_{n\in \N} \subset A_d$ such that $x_n \to x_0$ but $u^*(x_n) \nrightarrow u^*(x_0) = u_0(x_0)$. Then there exist $t_n>0$ and $\delta >0$ such that $u(x_n, t_n) \geq u_0(x_0) + \delta$ for all $n$. From the integrability of $u_0$, the $t_n$ are bounded, and passing to a subsequence we may assume that $t_n \to t \geq 0$. Then $u(x_n, t_n) \to u(x_0,t) \geq u_0(x_0) + \delta$, and we get that $t>0$ and $x_0 \in A_d$, a contradiction. This establishes that $u^*$ is continuous in $\R^d \setminus \{0\}$.

\subsubsection*{Step 4: Weak differentiability and conclusion} In the previous step we showed that $u^*(r)$ is continuous on $(0,\infty)$ and locally Lipschitz in $A_1$. For almost every $r \in A_1$, from \eqref{Feb27_15:21} we have
$$(u^*)'(r) = \lim_{\varepsilon \to 0} (u_{\varepsilon}^*)'(r).$$
From Minkowski's inequality we recall that 
\begin{equation}\label{Minkowski}
\|\nabla u_{\varepsilon}\|_{L^1(\R^d)} \leq \|\nabla u_0\|_{L^1(\R^d)}
\end{equation}
for any $\varepsilon >0$. Using Fatou's lemma, the bound in Theorem \ref{Thm1} already proved for Lipschitz functions, and \eqref{Minkowski}, we arrive at
\begin{align}\label{Feb27_16:53}
\begin{split}
\int_{A_1} \big|(u^*)'(r)\big|\,  r^{d-1}\, \d r &\leq \liminf_{\varepsilon \to 0}\int_{A_1} \big|(u_{\varepsilon}^*)'(r)\big|\,  r^{d-1}\, \d r \\
& \lesssim_d \liminf_{\varepsilon \to 0} \|\nabla u_{\varepsilon}\|_{L^1(\R^d)}\\
& \leq  \|\nabla u_0\|_{L^1(\R^d)}.
\end{split}
\end{align}
With this in hand, an adaptation of the argument in \cite[Section 5.4]{CS} shows that $u^*(r)$ is weakly differentiable in $(0,\infty)$ with weak derivative given by $\chi_{A_1^c} u_0'(r) + \chi_{A_1} (u^*)'(r)$. This in turn implies that $u^*(x)$ is weakly differentiable in $\R^d \setminus \{0\}$ by Lemma \ref{Lem_radialization}. From \eqref{Feb27_16:53}, its weak gradient $\nabla u^*$ on $\R^d \setminus \{0\}$ verifies
\begin{align}\label{Bound_grad_caso_gen}
\|\nabla u^*\|_{L^1(\R^d)} & = \kappa_{d-1}\int_0^{\infty} \big|(u^*)'(r)\big|\,  r^{d-1}\, \d r \nonumber \\
& = \kappa_{d-1} \left(\int_{A_1} \big|(u^*)'(r)\big|\,  r^{d-1}\, \d r + \int_{A_1^c} \big|u_0'(r)\big|\,  r^{d-1}\, \d r \right)\\
& \lesssim_d \|\nabla u_0\|_{L^1(\R^d)},  \nonumber
\end{align}
with $\kappa_{d-1}$ being the total surface measure of $\mathbb{S}^{d-1}$. This is our desired bound. As a final remark note that, from the Sobolev embedding, $u_0 \in L^{d/(d-1)}(\R^d)$ and hence so does $u^*$. In particular, $u^*$ is locally integrable in $\R^d$. Since we already know from \eqref{Bound_grad_caso_gen} that $\nabla u^* \in L^1(\R^d)$, an application of Lemma \ref{Lem_radialization} (ii) gives us that $u^*$ is in fact weakly differentiable in $\R^d$. This completes the proof of Theorem \ref{Thm1}.

\smallskip

\noindent{\it Remark}: A crucial insight in the proof above was to relate the variation of $u^*$ with the variation of the uncentered Hardy-Littlewood maximal operator $\widetilde{M}u_0$, expressed in inequality \eqref{Feb25_12:42}. Since $\widetilde{M}u_0(x) \lesssim_d  u^*(x) $, uniformly for all $x \in \R^d$, we could just run the exact same proof to obtain the gradient bound for $\widetilde{M}u_0$ starting from the gradient bound for $u^*$, showing that these two bounds are actually equivalent to each other.

\section{Proof of Theorem \ref{Thm2-sphere}}

Recall that $\sigma$ denotes the usual surface measure on the sphere $\mathbb{S}^{d}$. We denote by $\kappa_d  = \sigma(\mathbb{S}^{d}) = 2 \pi^{(d+1)/2}/\Gamma((d+1)/2)$ the total surface area of $\mathbb{S}^{d}$. With a slight abuse of notation, we shall also write
\begin{equation}\label{Def_sigma_r}
\sigma(r) := \sigma\big(\mathcal{B}_r(\zeta)\big) = \kappa_{d-1} \int_0^r (\sin t)^{d-1}\,\d t.
\end{equation}
Throughout this section we assume, without loss of generality, that $f$ {\it is real-valued and nonnegative} (or $+\infty)$.

\subsection{Preliminaries} \label{Prelim_11:49} If $f \in L^1(\mathbb{S}^{d})$, by Lebesgue differentiation we may modify it in a set of measure zero so that
\begin{equation}\label{Leb_Diff}
f(\xi) = \limsup_{\{r \to 0^+ \ :  \ \xi \in \overline{\mathcal{B}_r(\zeta)}\}} \ \intav{\mathcal{B}_r(\zeta)} f(\eta)\,\d \sigma(\eta)
\end{equation}
holds everywhere. Let us assume that is the case. For $f \in L^1(\mathbb{S}^{d})$ and $\xi \in \mathbb{S}^{d}$ let us define the set ${\bf B}_\xi$ as the set of closed balls that realize the supremum in the definition of the maximal function, that is 
\begin{equation}\label{20200619_10:30}
{\bf B}_\xi = \left\{\overline{\mathcal{B}_r(\zeta)}; \ \zeta \in  \mathbb{S}^{d}\, ,\,  r \geq 0\,,\,  \xi \in \overline{\mathcal{B}_r(\zeta)} \ : \   \widetilde{\mathcal{M}}f(\xi) = \, \intav{\, \overline{\mathcal{B}_r(\zeta)}} \, f(\eta)\,\d \sigma(\eta)\right\}.
\end{equation}
Here we consider the slight abuse of notation 
\begin{equation}\label{20200619_10:31}
\overline{\mathcal{B}_0(\xi)} := \{\xi\} \ \ {\rm and} \ \  \intav{\{\xi\}}  f(\eta)\,\d \sigma(\eta) := f(\xi),
\end{equation} 
in order to include the closed ball of radius zero as a potential candidate in the definition of ${\bf B}_\xi$. In light of \eqref{Leb_Diff} we always have that ${\bf B}_\xi$ is non-empty. Our first lemma holds for general Sobolev functions in $W^{1,1}(\mathbb{S}^{d})$ (not necessarily polar functions). 

\begin{lemma}\label{Lem4 - inner integral sphere}
Let $f \in W^{1,1}(\mathbb{S}^{d})$ be a nonnegative function that verifies \eqref{Leb_Diff} and let $\xi \in \mathbb{S}^{d}$ be a point such that $\widetilde{\mathcal{M}}f(\xi) > f(\xi)$. Assume that $\widetilde{\mathcal{M}}f$ is differentiable at $\xi$ and that $\overline{\mathcal{B}} \in {\bf B}_{\xi}$. Then 
$$\nabla \widetilde{\mathcal{M}}f(\xi)v = \, \intav{\mathcal{B}} \nabla f(\eta)\big(\!-(\eta\cdot v)\xi + (\eta\cdot \xi)v\big) \,\d \sigma(\eta)$$
for every $v \in \R^{d+1}$ with $v \perp \xi$. In particular,
$$\big|\nabla \widetilde{\mathcal{M}}f(\xi)\big| \leq \intav{\mathcal{B}} |\nabla f(\eta)| \,\d \sigma(\eta).$$
\end{lemma}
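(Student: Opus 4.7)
The first observation is that the optimal ball $\overline{\mathcal{B}} = \overline{\mathcal{B}_r(\zeta)}$ must have positive radius $r > 0$: the degenerate choice in \eqref{20200619_10:31} would give $\widetilde{\mathcal{M}}f(\xi) = f(\xi)$, contradicting the hypothesis $\widetilde{\mathcal{M}}f(\xi) > f(\xi)$. Next, fix $v \in \R^{d+1}$ with $v \perp \xi$, and consider the one-parameter group $\{R_t\}_{t \in \R}$ of rotations of $\R^{d+1}$ that rotate the two-plane $\mathrm{span}\{\xi, v\}$ by angle $|v|\,t$ and fix its orthogonal complement. Decomposing $\eta = (\eta\cdot\xi)\xi + (\eta\cdot v/|v|)\,v/|v| + w$ with $w \perp \xi, v$, a direct computation yields
\[
\tfrac{d}{dt}\Big|_{t=0} R_t \eta \;=\; -(\eta\cdot v)\xi + (\eta\cdot\xi)v,
\]
which specializes to $v$ at $\eta = \xi$. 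Note also that this vector is tangent to $\mathbb{S}^d$ at $\eta$, so its pairing with the spherical gradient $\nabla f(\eta)$ is well defined.

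The second step is the key comparison. Since each $R_t$ is an isometry of $\mathbb{S}^d$, the rotated set $R_t \overline{\mathcal{B}} = \overline{\mathcal{B}_r(R_t\zeta)}$ is again a closed geodesic ball and contains $R_t \xi$, so
\[
\widetilde{\mathcal{M}}f(R_t \xi) \;\geq\; \intav{\, \overline{\mathcal{B}_r(R_t\zeta)}} f\,\d\sigma \;=\; \intav{\overline{\mathcal{B}}} f(R_t \eta)\,\d\sigma(\eta),
\]
where the equality is the change of variables $\eta \mapsto R_t \eta$ combined with the $R_t$-invariance of $\sigma$. At $t = 0$ this is an equality, by the hypothesis $\overline{\mathcal{B}} \in {\bf B}_\xi$. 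Thus $F(t) := \widetilde{\mathcal{M}}f(R_t \xi) - \intav{\overline{\mathcal{B}}} f(R_t \eta)\,\d\sigma(\eta) \geq 0$ with $F(0) = 0$, so $F$ attains a minimum at $t=0$. Dividing $F(t) \geq 0$ by $t$ and letting $t \to 0^+$ and $t \to 0^-$ produces matching upper and lower bounds for the directional derivative, which forces the derivatives of the two terms to coincide at $t=0$.

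The identification of the derivatives is the last step. For the left-hand term, the chain rule and the differentiability hypothesis give $\tfrac{d}{dt}|_{t=0}\widetilde{\mathcal{M}}f(R_t\xi) = \nabla \widetilde{\mathcal{M}}f(\xi) \cdot v$. The right-hand term is the main technical obstacle, since $f$ is only of class $W^{1,1}$ and one cannot differentiate under the integral naively. The claim is
\[
\tfrac{d}{dt}\Big|_{t=0}\intav{\overline{\mathcal{B}}} f(R_t \eta)\,\d\sigma(\eta) \;=\; \intav{\overline{\mathcal{B}}} \nabla f(\eta)\cdot\bigl(-(\eta\cdot v)\xi + (\eta\cdot\xi)v\bigr)\,\d\sigma(\eta),
\]
which I would establish via the standard $L^1$-convergence of difference quotients for Sobolev functions: approximate $f$ by $C^{\infty}$ functions $f_{\epsilon}$, for which the identity is immediate from the fundamental theorem of calculus along the smooth orbits $s \mapsto R_s \eta$, and use the uniform bound $\|t^{-1}(f \circ R_t - f)\|_{L^1(\overline{\mathcal{B}})} \lesssim \|\nabla f\|_{L^1}$ obtained by integrating along these rotation orbits (an ACL-type representation, with $R$-invariant Fubini on $\mathbb{S}^d$) to pass to the limit. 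Combining the two derivative computations yields the displayed formula. Finally, since $\xi \cdot v = 0$ and $|\eta| = 1$, one computes $|{-}(\eta\cdot v)\xi + (\eta\cdot\xi)v|^2 = (\eta\cdot v)^2 + (\eta\cdot\xi)^2 |v|^2 \leq |v|^2$, so $|\nabla \widetilde{\mathcal{M}}f(\xi) \cdot v| \leq |v|\intav{\mathcal{B}} |\nabla f|\,\d\sigma$, and taking the supremum over unit $v \perp \xi$ gives the asserted bound on $|\nabla \widetilde{\mathcal{M}}f(\xi)|$.
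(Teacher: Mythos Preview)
Your proof is correct and follows essentially the same approach as the paper: both use the one-parameter rotation group in the plane $\mathrm{span}\{\xi,v\}$, compare $\widetilde{\mathcal{M}}f(R_t\xi)$ against the average of $f$ over the rotated ball, and exploit the two one-sided limits $t\to 0^{\pm}$ to turn the inequality into an equality, with the passage to the limit under the integral justified by the uniform $L^1$ bound on difference quotients for $W^{1,1}$ functions. Your write-up is slightly more explicit than the paper's in handling general (non-unit) $v$ and in verifying the pointwise bound $|{-}(\eta\cdot v)\xi+(\eta\cdot\xi)v|\le |v|$, but there is no substantive difference in the argument.
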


\begin{proof}
Observe first that the condition $\widetilde{\mathcal{M}}f(\xi) > f(\xi)$ implies that the ball $\mathcal{B}$ has positive radius. Without loss of generality let us assume that $|v| =1$. Let $R_t = R_{t,\xi, v}$ be the rotation of angle $t$ over the plane spanned by $\xi$ and $v$ that leaves the orthogonal complement invariant, i.e. 
$$R_t(\eta) = \big((\cos t) (\eta \cdot \xi) - (\sin t) (\eta \cdot v)\big)\xi + \big((\sin t) (\eta \cdot \xi) + (\cos t) (\eta \cdot v)\big)v + z(\eta)\,,$$
where $z(\eta)$ is the component of the vector $\eta$ that is orthogonal to the plane generated by $\xi$ and $v$. Then
\begin{align}\label{05302019_23:03}
\begin{split}
\nabla \widetilde{\mathcal{M}}f(\xi)v & = \lim_{t\to 0+}\frac{ \widetilde{\mathcal{M}}f(R_t\xi) - \widetilde{\mathcal{M}}f(\xi)}{t}\\
& \geq \lim_{t\to 0+} \frac{1}{t}\left( \intav{R_t(\mathcal{B})}f - \intav{\mathcal{B}}f\right)  \\
& =  \lim_{t\to 0+} \, \intav{\mathcal{B}} \frac{f(R_t\eta) - f(\eta) }{t} \, \d \sigma(\eta)\\
& = \, \intav{\mathcal{B}} \nabla f(\eta)\big(\!-(\eta\cdot v)\xi + (\eta\cdot \xi)v\big) \,\d \sigma(\eta).
\end{split}
\end{align}
The reverse inequality is obtained similarly by considering the limit as $t \to 0^-$. 
\end{proof}

\noindent {\it Remark}: The passage to the limit in \eqref{05302019_23:03} uses the fact that the difference quotients are bounded in $L^1$ by a multiple of the $L^1$-norm of the gradient of $f$, uniformly in $t$. With such a uniform bound one can establish the required limit by approximating $f$ by smooth $g$.

\subsection{Lipschitz case} Throughout this subsection we assume that our polar $f \in W^{1,1}(\mathbb{S}^d)$ is a Lipschitz function. Recalling that ${\bf e} = (1,0,0,\ldots, 0) \in \R^{d+1}$, for $\xi \in \mathbb{S}^d$ we write
$$\cos  \theta = \xi \cdot {\bf e}$$
with $\theta \in [0,\pi]$. Note that $\theta = \theta(\xi) = d({\bf e}, \xi)$ is the polar angle. We generally write $f(\xi)$ for the function on $\mathbb{S}^d$, and $f(\theta)$ for its polar version on $(0,\pi)$. We then have
$$|\nabla f(\xi)| =|f'(\theta)|$$
for a.e. $\xi \in \mathbb{S}^d \setminus \{{\bf e}, {\bf -e}\}$, and
$$\|\nabla f\|_{L^1(\mathbb{S}^d)} = \kappa_{d-1} \int_0^\pi |f'(\theta)| \,(\sin \theta)^{d-1}\,\d \theta.$$

\subsubsection{Estimates for small radii} For $\zeta \in \mathbb{S}^d$ let us define 
$$w(\zeta) = \min\big\{\theta(\zeta) \, ,\, \pi - \theta(\zeta)\big\} = \min \{d({\bf e}, \zeta), d({\bf -e}, \zeta)\}.$$
Let us define the auxiliary maximal operator $\widetilde{\mathcal{M}}^I $ by (recall convention \eqref{20200619_10:31})
\begin{equation}\label{20200619_18:46}
\widetilde{\mathcal{M}}^I f(\xi) = \sup_{\{\xi \in \overline{\mathcal{B}_r(\zeta)} \ : \ 0 \leq r \leq w(\zeta) /4\}}  \ \intav{\ \overline{\mathcal{B}_r(\zeta)}} \, f(\eta)\,\d \sigma(\eta).
\end{equation}
In analogy to \eqref{20200619_10:30}, for each $\xi \in \SS^{d}$ we define the set of good balls
\begin{equation*}
{\bf B}^I_\xi = \left\{\overline{\mathcal{B}_r(\zeta)}; \ \zeta \in  \mathbb{S}^{d}\, ,\,  0 \leq r \leq \frac{w(\zeta)}{4}\,;\,  \xi \in \overline{\mathcal{B}_r(\zeta)} \ : \   \widetilde{\mathcal{M}}^If(\xi) = \, \intav{\, \overline{\mathcal{B}_r(\zeta)}} \, f(\eta)\,\d \sigma(\eta)\right\}.
\end{equation*}
Notice that $\widetilde{\mathcal{M}}^I f$ is also a polar function. We consider the detachment set
$$\mathcal{E}_d := \big\{ \xi \in \mathbb{S}^d \setminus \{{\bf e}, {\bf -e}\} \ : \ \widetilde{\mathcal{M}}^I f(\xi) > f(\xi)\big\},$$
and its polar version, denoted by
$$\mathcal{E}_1 =\{\theta(\xi) = d({\bf e}, \xi) \ : \ \xi \in  \mathcal{E}_d\}.$$
One can check that $\widetilde{\mathcal{M}}^I f$ is a continuous function in $\mathbb{S}^d$.  Further qualitative properties of $\widetilde{\mathcal{M}}^I f$ are described in the next two results.
\begin{lemma}
$\widetilde{\mathcal{M}}^I f$ does not have a strict local maximum in $\mathcal{E}_1$.
\end{lemma}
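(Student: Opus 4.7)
The plan is to argue by contradiction, exploiting the fact that the uncentered maximal operator propagates large values from a maximizing ball to all points of that ball, which is incompatible with a strict local maximum once the maximizing ball has positive radius. Suppose, for contradiction, that $\theta_0 \in \mathcal{E}_1$ is a strict local maximum of the polar function $\theta \mapsto \widetilde{\mathcal{M}}^I f(\theta)$, and pick $\xi_0 \in \mathbb{S}^d$ with $\theta(\xi_0) = \theta_0$. By continuity of $\widetilde{\mathcal{M}}^I f$ (noted immediately before the lemma), there is an open interval $N \subset (0,\pi)$ containing $\theta_0$ with $\widetilde{\mathcal{M}}^I f(\theta) < \widetilde{\mathcal{M}}^I f(\theta_0)$ for every $\theta \in N \setminus \{\theta_0\}$.

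First I would show ${\bf B}^I_{\xi_0} \neq \emptyset$ via compactness. Since $\widetilde{\mathcal{M}}^I f(\xi_0) > f(\xi_0)$ and $f$ is Lipschitz, any sequence of admissible balls $\overline{\mathcal{B}_{r_n}(\zeta_n)} \ni \xi_0$ whose averages converge to $\widetilde{\mathcal{M}}^I f(\xi_0)$ must satisfy $r_n \geq \varepsilon > 0$ along a subsequence (otherwise the averages would converge to $f(\xi_0)$, since $\mathrm{diam}(\overline{\mathcal{B}_{r_n}(\zeta_n)}) \leq 2r_n \to 0$). The constraint $r_n \leq w(\zeta_n)/4$ then forces $w(\zeta_n) \geq 4\varepsilon$, keeping the centers in a closed set away from $\pm\mathbf{e}$, while $r_n \leq \pi/8$ is automatically bounded. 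Passing to a further subsequence yields a maximizer $\overline{\mathcal{B}_{r_0}(\zeta_0)} \in {\bf B}^I_{\xi_0}$ with $r_0 \in [\varepsilon,\pi/8]$ and $\xi_0 \in \overline{\mathcal{B}_{r_0}(\zeta_0)}$.

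Next I would choose $\xi_1 \in \overline{\mathcal{B}_{r_0}(\zeta_0)}$ close to $\xi_0$ with $\theta(\xi_1) \neq \theta_0$. Such a $\xi_1$ exists because any relative neighborhood of $\xi_0$ in the $d$-dimensional ball $\overline{\mathcal{B}_{r_0}(\zeta_0)}$ (a full ball if $\xi_0$ is interior, a half-ball if $\xi_0 \in \partial\overline{\mathcal{B}_{r_0}(\zeta_0)}$) cannot be contained in the $(d-1)$-dimensional latitude $\{\theta = \theta_0\}$; taking $\xi_1$ close enough to $\xi_0$ ensures $\theta(\xi_1) \in N \setminus \{\theta_0\}$. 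The ball $\overline{\mathcal{B}_{r_0}(\zeta_0)}$ is still admissible for $\widetilde{\mathcal{M}}^I f$ at $\xi_1$, since its center-radius constraint $r_0 \leq w(\zeta_0)/4$ involves only $\zeta_0, r_0$, and $\xi_1 \in \overline{\mathcal{B}_{r_0}(\zeta_0)}$. Hence
$$\widetilde{\mathcal{M}}^I f(\xi_1) \;\geq\; \intav{\,\overline{\mathcal{B}_{r_0}(\zeta_0)}} f(\eta)\,\d\sigma(\eta) \;=\; \widetilde{\mathcal{M}}^I f(\xi_0),$$
contradicting $\widetilde{\mathcal{M}}^I f(\theta(\xi_1)) < \widetilde{\mathcal{M}}^I f(\theta_0)$ coming from the strict local maximum.

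The main obstacle I anticipate is the compactness step producing an actual maximizer in ${\bf B}^I_{\xi_0}$: the admissible set $\{(r,\zeta):\xi_0\in\overline{\mathcal{B}_r(\zeta)},\ r\leq w(\zeta)/4\}$ is not compact, as centers can drift to the poles while $r \to 0$. This is resolved precisely by the detachment hypothesis $\widetilde{\mathcal{M}}^I f(\xi_0) > f(\xi_0)$, which bounds the maximizing radii below. The remaining transversality step (finding $\xi_1$ in the ball with $\theta(\xi_1) \neq \theta_0$) and the propagation inequality are essentially the Aldaz–Pérez Lázaro no-local-max argument adapted to the spherical polar setting.
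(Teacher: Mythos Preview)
Your proof is correct and follows essentially the same approach the paper invokes by citing \cite[Lemma 3.10]{Lu2}: the standard Aldaz--P\'{e}rez L\'{a}zaro observation that an uncentered maximal function cannot have a strict local maximum on its detachment set, because a maximizing ball of positive radius (guaranteed here by $\xi_0\in\mathcal{E}_d$) is admissible for every one of its points and hence propagates the maximal value across a nondegenerate range of polar angles. Your compactness argument to produce a genuine maximizer in ${\bf B}^I_{\xi_0}$ and the transversality step are exactly the expected details.
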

\begin{proof}
The proof is identical to \cite[Lemma 3.10]{Lu2}.
\end{proof}

\begin{lemma}\label{loc_lipschitz_sphere}
$\widetilde{\mathcal{M}}^I f$ is locally Lipschitz in $\mathcal{E}_d$. 
\end{lemma}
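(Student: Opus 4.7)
The plan is to combine a uniform lower bound on the radii of near-optimal balls, which follows from $\xi_0$ sitting in the detachment set, with a perturbation argument adapted from Luiro \cite{Lu2}. The polar symmetry of $f$ reduces the geometric bookkeeping to the one-dimensional polar angle, which is crucial because the constraint $r \leq w(\zeta)/4$ couples the radius to the center.

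First, I would fix $\xi_0 \in \mathcal{E}_d$ with $\alpha := \widetilde{\mathcal{M}}^I f(\xi_0) - f(\xi_0) > 0$ and use the continuity of $f$ and of $\widetilde{\mathcal{M}}^I f$ to extract an open neighborhood $U$ of $\xi_0$ on which $\widetilde{\mathcal{M}}^I f - f > \alpha/2$. Since $f$ is Lipschitz, for any $\xi \in \overline{\mathcal{B}_r(\zeta)}$ one has $\big|\intav{\mathcal{B}_r(\zeta)} f - f(\xi)\big| \leq 2r\,\mathrm{Lip}(f)$. Choosing $\delta > 0$ with $2\delta\,\mathrm{Lip}(f) < \alpha/4$, balls of radius $r < \delta$ are never near-optimal for any $\xi \in U$, so that
\[
\widetilde{\mathcal{M}}^I f(\xi) \;=\; \sup\Big\{\intav{\mathcal{B}_r(\zeta)} f \;:\; \xi \in \overline{\mathcal{B}_r(\zeta)},\ \delta \le r \le w(\zeta)/4\Big\}
\]
for every $\xi \in U$. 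The constraint $r \geq \delta$ also forces $w(\zeta) \geq 4\delta$, so the supremum runs over a compact parameter set and is attained by continuity.

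Next, exploiting that $f$ is polar, I would write $\intav{\mathcal{B}_r(\zeta)} f = A(\theta_\zeta, r)$ with $\theta_\zeta := d({\bf e}, \zeta)$, and observe that the existence of some $\zeta$ at latitude $\theta_\zeta$ with $\xi \in \overline{\mathcal{B}_r(\zeta)}$ is equivalent to $|\theta(\xi) - \theta_\zeta| \leq r$. A direct rotation argument then shows that $A$ is Lipschitz in $(\theta_\zeta, r)$ on the admissible compact region (using the Lipschitzness of $f$ and the smoothness of the spherical Jacobian $\sigma(r)$ from \eqref{Def_sigma_r}). Given $\xi_1, \xi_2 \in U$ with polar angles $\theta_i$ and an optimizer $(\theta_{\zeta_1}, r_1)$ for $\xi_1$, I would take as trial pair $\theta_{\zeta_2} := \theta_{\zeta_1} + (\theta_2-\theta_1)$ and $r_2 := r_1$, which automatically satisfies $|\theta_2 - \theta_{\zeta_2}| \leq r_2$. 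Any violation of the upper bound $r_2 \leq w(\zeta_2)/4$ is at most $|\theta_1-\theta_2|/4$; I would absorb it by a shrink-and-slide correction (shrink $r_2$ to restore the upper bound, then slide $\theta_{\zeta_2}$ towards $\theta_2$ to restore containment), which iterates with geometric decay and hence produces a total adjustment of size $O(d(\xi_1,\xi_2))$. Invoking the Lipschitz bound on $A$ gives
\[
\widetilde{\mathcal{M}}^I f(\xi_2) \;\geq\; A(\theta_{\zeta_2}, r_2) \;\geq\; A(\theta_{\zeta_1}, r_1) - C\, d(\xi_1,\xi_2) \;=\; \widetilde{\mathcal{M}}^I f(\xi_1) - C\, d(\xi_1,\xi_2),
\]
and the reverse inequality follows by symmetry.

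The hard part will be precisely the boundary case in which the optimizer for $\xi_1$ saturates both constraints simultaneously ($|\theta_1 - \theta_{\zeta_1}| = r_1$ and $r_1 = w(\zeta_1)/4$), so that the shrink and slide corrections interact and must be iterated. The uniform lower bound $r_1 \geq \delta$ from the first step is what ensures these interacting adjustments remain controlled relative perturbations of size $O(d(\xi_1,\xi_2))$ and ultimately yields a Lipschitz constant depending only on the neighborhood $U$ (through $\delta$, $\mathrm{Lip}(f)$, and the geometry of $\mathbb{S}^d$).
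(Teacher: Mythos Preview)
Your proposal is correct and follows the same overall architecture as the paper: obtain a uniform positive lower bound on the radii of optimal balls in a neighborhood of $\xi_0\in\mathcal{E}_d$, reduce to the one-dimensional polar picture, and then produce for $\xi_2$ an admissible competitor ball whose average differs from the optimizer for $\xi_1$ by $O(d(\xi_1,\xi_2))$.

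The route differs in how the competitor is built. The paper, after fixing $\eta_1,\eta_2$ on the same half great circle and using $\widetilde{\mathcal{M}}^If(\eta_1)>\widetilde{\mathcal{M}}^If(\eta_2)$ to pin down their relative position with respect to $\zeta_1$, constructs the trial ball for $\eta_2$ in a single shot: slide the center from $\zeta_1$ toward $\eta_2$ to the unique point $\zeta_2$ with $\eta_2\in\partial\mathcal{B}_{r_2}(\zeta_2)$ and $r_2=\min\{r_1,w(\zeta_2)/4\}$, and then a short case split gives $d(\zeta_1,\zeta_2)\le\tfrac{4}{3}d(\eta_1,\eta_2)$ and $r_1-r_2\le\tfrac{1}{3}d(\eta_1,\eta_2)$. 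Your approach instead first performs the rigid rotation $(\theta_{\zeta_1},r_1)\mapsto(\theta_{\zeta_1}+h,r_1)$, which already satisfies the containment constraint exactly, and then iteratively shrinks and slides to repair the constraint $r\le w(\zeta)/4$; the contraction factor $1/4$ in that constraint yields geometric decay with the same total displacement $\tfrac{4}{3}|h|$. Your version is symmetric in $\xi_1,\xi_2$ and avoids the positional case analysis, at the cost of an iteration in place of the paper's one-step construction; both rely on the same Lipschitz estimate for the average $A(\theta_\zeta,r)$ on the region $\{r\ge\delta\}$ to conclude.
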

\begin{proof}
Let $\xi \in \mathcal{E}_d$. Let $\overline{\mathcal{B}_r(\zeta)} \in {\bf B}^I_{\xi}$ with $r$ minimal. Then $r >0$ and it is possible to find a neighborhood $V$ of $\xi$ of the form $V = \{ \eta \in \mathbb{S}^d \ : \ \theta(\xi) - \varepsilon < \theta(\eta) < \theta(\xi) + \varepsilon\}$ such that: (i) $\varepsilon < r/100$ and (ii) if $\eta \in V$ and $\overline{\mathcal{B}_s(\omega)} \in {\bf B}^I_{\eta}$ then $s >99r/100$.

\smallskip

Let $\eta_1, \omega_2 \in V$. Let $S$ be the half great circle connecting ${\bf e}, \eta_1, -{\bf e}$. If $\eta_2 \in S$ is such that $d({\bf e}, \eta_2) = d({\bf e}, \omega_2)$ then we have $d(\eta_1, \eta_2) \leq d(\eta_1, \omega_2)$. Since $\widetilde{\mathcal{M}}^I f(\eta_2) = \widetilde{\mathcal{M}}^I f(\omega_2)$, for the purposes of proving Lipschitz continuity it suffices to work with $\eta_1, \eta_2 \in S$. Assume without loss of generality that $\widetilde{\mathcal{M}}^I f(\eta_1) >  \widetilde{\mathcal{M}}^I f(\eta_2)$. Let $\overline{\mathcal{B}_{r_1}(\zeta_1)} \in {\bf B}^I_{\eta_1}$ with $\zeta_1 \in S$. Then $\eta_2 \notin \overline{\mathcal{B}_{r_1}(\zeta_1)}$, and hence $\eta_2$ is not between $\zeta_1$ and $\eta_1$. It is also easy to see that we cannot have $\zeta_1$ between $\eta_1$ and $\eta_2$ due to conditions (i) and (ii) above. Hence we must have $\eta_1$ between $\zeta_1$ and $\eta_2$. We now choose a ball $\mathcal{B}_{r_2}(\zeta_2)$, with $\zeta_2 \in S$ lying between $\zeta_1$ and $\eta_2$, such that $\eta_2 \in \partial \overline{\mathcal{B}_{r_2}(\zeta_2)}$ and 
\begin{equation}\label{Eq_lem_loc_Lip_0}
r_2 = d(\zeta_2, \eta_2) = \min\left\{r_1, \frac{w(\zeta_2)}{4}\right\} 
\end{equation}
(one may think of moving the center $\zeta_1$ along $S$ in the direction of $\eta_2$ until finding the unique choice of $\zeta_2$). Note that $\zeta_2$ is in fact between $\zeta_1$ and $\eta_1$ and hence
\begin{align}\label{Eq_lem_loc_Lip_1}
r_2 = d(\zeta_2, \eta_2) = d(\zeta_1, \eta_1) - d(\zeta_1,\zeta_2) + d(\eta_1, \eta_2) \leq r_1 - d(\zeta_1,\zeta_2) + d(\eta_1, \eta_2).
\end{align}
If $r_2 = r_1$ in \eqref{Eq_lem_loc_Lip_0} then we have $ d(\zeta_1,\zeta_2) \leq d(\eta_1, \eta_2)$. In the other case we have
\begin{equation*}
r_2 = \frac{w(\zeta_2)}{4} \geq \frac{w(\zeta_1)}{4} - \frac{d(\zeta_1, \zeta_2)}{4} \geq r_1 - \frac{d(\zeta_1, \zeta_2)}{4},
\end{equation*}
and combining with \eqref{Eq_lem_loc_Lip_1} we obtain $d(\zeta_1, \zeta_2) \leq \frac{4}{3} d(\eta_1, \eta_2)$, which yields  $r_1 - r_2 \leq \frac{1}{3} d(\eta_1, \eta_2)$. We conclude by observing that 
\begin{align*}
\widetilde{\mathcal{M}}^I f(\eta_1) -  \widetilde{\mathcal{M}}^I f(\eta_2) & \leq \, \intav{\mathcal{B}_{r_1}(\zeta_1)} f - \intav{\mathcal{B}_{r_2}(\zeta_2)} f \\
& \leq \,\left( \ \intav{\mathcal{B}_{r_1}(\zeta_1)} f - \intav{\mathcal{B}_{r_2}(\zeta_1)} f\right) + \left(\ \intav{\mathcal{B}_{r_2}(\zeta_1)} f - \intav{\mathcal{B}_{r_2}(\zeta_2)} f\right)\\
& \lesssim_{d,r,f} \, d(\eta_1, \eta_2).
\end{align*}
\end{proof}

An adaptation of the argument in \cite[Section 5.4]{CS} then shows that $\widetilde{\mathcal{M}}^I f(\theta)$ is weakly differentiable in $(0,\pi)$, with weak derivative given by $\chi_{\mathcal{E}_1^c} f'(\theta) + \chi_{\mathcal{E}_1} \big(\widetilde{\mathcal{M}}^I f\big)'(\theta)$. In fact, if $\theta \in \mathcal{E}_1^c$ is a point of differentiability of $f$ (which are almost all points of $\mathcal{E}_1^c$) one can plainly see that $f'(\theta) = 0$, otherwise one could do better than $f(\theta)$ in the maximal function \eqref{20200619_18:46} and $\theta$ would belong to $\mathcal{E}_1$ instead. The weak derivative of $\widetilde{\mathcal{M}}^I f(\theta)$ is then simply $\chi_{\mathcal{E}_1} \big(\widetilde{\mathcal{M}}^I f\big)'(\theta)$. From Lemma \ref{Lem_radialization_sphere} below we have that $\widetilde{\mathcal{M}}^I f(\xi)$ is weakly differentiable in $\SS^d$. The next proposition establishes the desired control of the variation.
\begin{proposition}\label{Prop_M_aux}
The following inequality holds
$$\big\|\nabla \widetilde{\mathcal{M}}^I f\big\|_{L^1(\mathbb{S}^d)} \lesssim_d \|\nabla f\|_{L^1(\mathbb{S}^d)}.$$
\end{proposition}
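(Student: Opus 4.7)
The plan is to mirror Step~1 of the proof of Theorem~\ref{Thm1}, adapted to the spherical weight $(\sin\theta)^{d-1}$, and to exploit the radius cap $r\leq w(\zeta)/4$ built into $\widetilde{\mathcal{M}}^I$ to keep every competing ball quantitatively bounded away from the two poles. Since $f$ is polar, so is $h := \widetilde{\mathcal{M}}^I f$, and by Lemma~\ref{Lem_radialization_sphere} the desired bound is equivalent to
$$\int_0^\pi |h'(\theta)|(\sin\theta)^{d-1}\,\d\theta \;\lesssim_d\; \int_0^\pi |f'(\theta)|(\sin\theta)^{d-1}\,\d\theta.$$
Writing $\mathcal{E}_1 = \bigsqcup_{i\geq 1}(a_i,b_i)$, and using that $h'=f'$ a.e.\ on $\mathcal{E}_1^c$ (as noted just before the statement of the proposition), only the components $(a_i,b_i)$ contribute.

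Fix a component $(a,b)$. The no-strict-local-maximum lemma, together with the local Lipschitz regularity of Lemma~\ref{loc_lipschitz_sphere}, produces $\tau\in[a,b]$ such that $h$ is non-increasing on $[a,\tau]$ and non-decreasing on $[\tau,b]$. Integrating by parts on each monotone sub-interval against $(\sin\theta)^{d-1}$, applying the continuity identities $h(a)=f(a)$, $h(b)=f(b)$ and the inequality $h(\tau)\geq f(\tau)$, and then rewriting the residual $f$-boundary contribution as $-\int_a^\tau (f(\sin\cdot)^{d-1})'\,\d\theta+\int_\tau^b(f(\sin\cdot)^{d-1})'\,\d\theta$, one obtains after cancellation the estimate
\begin{equation*}
\int_a^b |h'|(\sin\theta)^{d-1}\,\d\theta \;\leq\; \int_a^b |f'|(\sin\theta)^{d-1}\,\d\theta \;+\; (d-1)\!\left(\int_a^\tau - \int_\tau^b\right)(h-f)(\theta)(\sin\theta)^{d-2}\cos\theta\,\d\theta.
\end{equation*}
Summed over $i$, the first summand is already of the correct form, and the problem reduces to absorbing the error integrals.

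This is where the cap $r\leq w(\zeta)/4$ is essential: it forces every good ball $\overline{\mathcal{B}_r(\zeta)}\in{\bf B}^I_\xi$ to lie at polar distance $\gtrsim w(\zeta)$ from each pole, so that across any such ball the weights $(\sin\theta)^{d-1}$, $(\sin\theta)^{d-2}$ and (away from the equator) $|\cos\theta|$ are all comparable up to dimensional constants. This comparability is the spherical substitute for the Euclidean identity $r^{d-2}\,\d r = \tfrac{1}{d-1}\,\d(r^{d-1})$ used in Step~2 of the proof of Theorem~\ref{Thm1}. Combining it with Fubini applied to the integral representation $h(\theta)=\intav{\,\mathcal{B}} f\,\d\sigma$ (together with $h\leq\widetilde{\mathcal{M}}f$ and the classical weak $(1,1)$ inequality on $\mathbb{S}^d$) should convert the error integrals into a controlled multiple of $\int_0^\pi |f'|(\sin\theta)^{d-1}\,\d\theta$.

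The main obstacle is precisely this last absorption step. Two features make it delicate: the factor $\cos\theta$ changes sign at the equator, so a single global integration by parts on $(0,\pi)$ is not available; and components $(a_i,b_i)$ that straddle the equator must be treated by localization combined with the polar symmetry $\theta\leftrightarrow\pi-\theta$. Once this is handled, the vanishing of boundary terms as $(a_i,b_i)$ touches a pole is routine, coming from $h(\theta)(\sin\theta)^{d-1}\to 0$ as $\theta\to 0^+$ and $\theta\to \pi^-$, which is itself a consequence of $h\in L^{1,\infty}(\mathbb{S}^d)$.
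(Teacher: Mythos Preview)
Your integration-by-parts identity on each component is correct, and the residual $(d-1)\big(\int_a^\tau-\int_\tau^b\big)(h-f)(\sin\theta)^{d-2}\cos\theta\,\d\theta$ is indeed what must be absorbed. The gap is in how you propose to absorb it. The weak $(1,1)$ inequality controls $\|\widetilde{\mathcal{M}}f\|_{L^{1,\infty}}$ in terms of $\|f\|_{L^1}$, not $\|\nabla f\|_{L^1}$, so it is irrelevant here. More seriously, your appeal to ``the spherical substitute for the Euclidean identity used in Step~2 of the proof of Theorem~\ref{Thm1}'' is circular: in that proof the error term involves $\widetilde{M}u_0$, and Step~2 converts $\int\widetilde{M}u_0\,r^{d-2}\,\d r$ into $\int|(\widetilde{M}u_0)'|\,r^{d-1}\,\d r$, which is then bounded by the \emph{already-established} theorem of Luiro. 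Here there is no external result to close the loop; Proposition~\ref{Prop_M_aux} is itself the base case for Theorem~\ref{Thm2-sphere}, so any attempt to bound the residual via the gradient of $h$ or of $\widetilde{\mathcal{M}}f$ would be self-referential. What actually works is the pointwise estimate the radius cap gives directly: for $\theta\in(0,\pi/4]$ every admissible ball at polar level $\theta$ lies inside the polar annulus $\{c_1\theta\leq\theta(\eta)\leq c_2\theta\}$ for fixed $0<c_1<1<c_2$, whence $(h-f)(\theta)\leq\int_{c_1\theta}^{c_2\theta}|f'(s)|\,\d s$; plugging this in and using Fubini together with $\theta\,(\sin\theta)^{d-2}\simeq_d(\sin\theta)^{d-1}$ closes the estimate. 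Your outline never isolates this bound, so as written it is a plan rather than a proof.

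The paper sidesteps the residual altogether by a dyadic decomposition in the spirit of Luiro~\cite[Lemma~3.5]{Lu2}: it splits $\mathcal{E}_1\cap(0,\pi/2]$ into blocks $\mathcal{E}_1^k=\mathcal{E}_1\cap[\pi/2^{k+1},\pi/2^k]$ on which $(\sin\theta)^{d-1}$ is essentially constant, reducing to an \emph{unweighted} one-dimensional variation bound. The V-shape gives $\int_a^b|h'|=h(a)+h(b)-2h(\tau)$ on each subcomponent, and the radius cap is used only at dyadic endpoints via $h(\pi/2^j)\leq\sup_{[\pi/2^{j+1},\pi/2^{j-1}]}f$. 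Summing over $k$ with bounded overlap finishes. This buys a cleaner argument with no error integral and no sign change at the equator to manage.
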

\begin{proof}
The proof follows the outline of \cite[Lemma 3.5]{Lu2} with minor changes. We need to prove that 
$$\int_{\mathcal{E}_1} \big|\big(\widetilde{\mathcal{M}}^I f)'(\theta)\big| \,(\sin \theta)^{d-1}\,\d \theta \lesssim_d \int_{0}^{\pi} \big|f'(\theta)\big| \,(\sin \theta) ^{d-1}\,\d \theta.$$
We shall prove that 
\begin{equation}\label{Small_radii_eq1}
\int_{\mathcal{E}_1\cap [0,\pi/2]} \big|\big(\widetilde{\mathcal{M}}^I f)'(\theta)\big| \,(\sin \theta)^{d-1}\,\d \theta \lesssim_d \int_{0}^{\pi} \big|f'(\theta)\big| \,(\sin \theta)^{d-1}\,\d \theta
\end{equation}
and the proposition follows by symmetry. For $k \geq 1$, we define $\mathcal{E}_1^k = \mathcal{E}_1 \cap \left[\frac{\pi}{2^{k+1}}, \frac{\pi}{2^k}\right]$, and since $\mathcal{E}_1$ is open we may write ${\rm int}\big(\mathcal{E}_1^k\big) = \bigcup_{i=1}^{\infty} (a_i^k,b_i^k)$. We observe that $ \frac{ \,(\sin 2\theta)^{d-1}}{\,(\sin \theta)^{d-1}} \simeq_d 1$ for $\theta \leq \frac{\pi}{4}$. When $a_i^k = \frac{\pi}{2^{k+1}}$ or $b_i^k = \frac{\pi}{2^{k}}$ we observe, from the definition of the auxiliary operator in \eqref{20200619_18:46}, that
$$\widetilde{\mathcal{M}}^I f(\pi/2^{k+1})\ , \ \widetilde{\mathcal{M}}^I f(\pi/2^{k}) \leq \ \sup_{\theta(\xi)\, \in\, [\pi/2^{k+2}, \pi/2^{k-1}] } f(\xi)$$
for $k\geq 2$. These are the ingredients needed to run the argument in \cite[Lemma 3.5]{Lu2} in order to get
\begin{align}\label{Small_radii_eq2}
\int_{\mathcal{E}_1^k} \big|\big(\widetilde{\mathcal{M}}^I f)'(\theta)\big| \,(\sin \theta)^{d-1}\,\d \theta  \lesssim_d \int_{\pi/2^{k+2}}^{\pi/2^{k-1}} \big|f'(\theta)\big| (\sin \theta)^{d-1} \,\d \theta
\end{align}
for $k\geq 2$. In the case $k=1$ we must be a bit more careful when $b_i^1 = \pi/2$ by using the bound
$$ \widetilde{\mathcal{M}}^I f(\pi/2) \leq \ \sup_{\theta(\xi)\, \in\, [\pi/4, 3\pi/4] } f(\xi)\,,$$
which then yields
\begin{align}\label{Small_radii_eq3}
\begin{split}
\int_{\mathcal{E}_1^1} \big|\big(\widetilde{\mathcal{M}}^I f)'(\theta)\big| \,(\sin \theta)^{d-1}\,\d \theta & \leq \int_{\mathcal{E}_1^1} \big|\big(\widetilde{\mathcal{M}}^I f)'(\theta)\big| \,\d \theta \lesssim \int_{\pi/8}^{3\pi/4} \big|f'(\theta)\big| \,\d \theta\\
&  \lesssim_d \int_{\pi/8}^{3\pi/4} \big|f'(\theta)\big| (\sin \theta)^{d-1} \,\d \theta.
\end{split}
\end{align}
Finally, we add up \eqref{Small_radii_eq2} and \eqref{Small_radii_eq3} to get \eqref{Small_radii_eq1}.
\end{proof}

\subsubsection{Estimates for large radii - preliminary lemmas} The other crucial ingredient in the proof of Luiro \cite[Lemma 2.2 (v)]{Lu2} is the bound
$$\big|\nabla {\widetilde M} f(x)\big| \leq \frac{1}{|x|}\,  \intav{B} |\nabla f (y)| \, |y|\, \d y,$$
where $\overline{B} \ni x$ is a ball in which the maximal function is realized. The main difficulty in the case of $\mathbb{S}^d$ is in establishing a bound that will serve a similar purpose. This is accomplished in Lemma \ref{crucial_lemma_large_radii} below but before we actually get there we need a few preliminary lemmas. Recall the definition of $\sigma(r)$ in \eqref{Def_sigma_r}, and observe that $\sigma'(r) = \kappa_{d-1}(\sin r)^{d-1}$ is equal to the $(d-1)$-dimensional area of $\partial \mathcal{B}_r(\zeta)$.
\begin{lemma}\label{Lem_10_prep}
Let $\xi \in \mathbb{S}^d \setminus \{{\bf e}, {\bf -e}\}$ and let $\overline{\mathcal{B}_r(\zeta)}  \in {\bf B}_{\xi}$, with $\zeta$ in the half great circle determined by ${\bf e}$, $\xi$ and $-{\bf e}$. Assume that $0 \leq \theta(\zeta) < \theta(\xi)$, that $\xi \in \partial \mathcal{B}_r(\zeta)$, that $\widetilde{\mathcal{M}}f(\xi) > f(\xi)$ and that $\widetilde{\mathcal{M}}f$ is differentiable at $\xi$. Then
\begin{align*}
\nabla \widetilde{\mathcal{M}}f(\xi)(v(\xi, {\bf e})) = \frac{\sigma'(r)}{\sigma(r)} \intav{\mathcal{B}_r(\zeta)} \nabla f (\eta) (v(\eta, \zeta) )\,\frac{\sigma(d(\zeta,\eta))}{\sigma'(d(\zeta,\eta))}  \,\d\sigma(\eta),
\end{align*} 
where 
\begin{equation*}
v(\eta, \zeta) = \frac{\zeta - (\eta \cdot \zeta)\eta}{ |\zeta- (\eta \cdot \zeta)\eta|}
\end{equation*}
is the unit vector, tangent to $\eta$, in the direction of the geodesic that goes from $\eta$ to $\zeta$.
\end{lemma}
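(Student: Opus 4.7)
The plan is to exploit the boundary condition $\xi \in \partial \mathcal{B}_r(\zeta)$ in order to reduce the tangential derivative of $\widetilde{\mathcal{M}}f$ at $\xi$ in the direction $v(\xi, {\bf e})$ to a one-variable derivative of the average of $f$ over concentric balls centered at $\zeta$. Let $\xi_t$ denote the rotation of $\xi$ along the great circle through ${\bf e}, \zeta, \xi, -{\bf e}$ by angle $t$ in the direction of $v(\xi, {\bf e})$; since $\theta(\zeta) < \theta(\xi)$ this direction coincides with $v(\xi, \zeta)$, and because $\xi_t$ stays on the same great circle as $\zeta$, for all small $|t|$ one has the \emph{exact} identity $d(\xi_t, \zeta) = r - t$. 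Setting
$$F(s) = \intav{\mathcal{B}_s(\zeta)} f\,\d\sigma,$$
the ball $\mathcal{B}_{r-t}(\zeta)$ is then an admissible competitor at $\xi_t$, so $\widetilde{\mathcal{M}}f(\xi_t) \geq F(r - t)$, with equality at $t=0$ since $\overline{\mathcal{B}_r(\zeta)} \in {\bf B}_\xi$. As $f$ is Lipschitz, $F$ is $C^1$ near $r$, and together with the assumed differentiability of $\widetilde{\mathcal{M}}f$ at $\xi$ this forces the nonnegative function $t \mapsto \widetilde{\mathcal{M}}f(\xi_t) - F(r - t)$ to have zero derivative at its minimum $t = 0$, i.e.
$$\nabla \widetilde{\mathcal{M}}f(\xi)(v(\xi, {\bf e})) = -F'(r).$$

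Next, I would compute $F'(r)$ explicitly. Writing $F(s) = \sigma(s)^{-1} \int_0^s g(\rho)\,\d\rho$ with $g(\rho) = \int_{\partial \mathcal{B}_\rho(\zeta)} f\,\d\sigma_\partial$, elementary differentiation yields
$$-F'(r) = \frac{\sigma'(r)}{\sigma(r)}\left(\intav{\mathcal{B}_r(\zeta)} f\,\d\sigma - \intav{\partial \mathcal{B}_r(\zeta)} f\,\d\sigma_\partial\right).$$

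The remaining step is to identify this difference of averages with the gradient integral in the claim. Passing to geodesic polar coordinates $(s, \omega) \in (0, r) \times \mathbb{S}^{d-1}$ centered at $\zeta$, so that $\d\sigma = (\sin s)^{d-1}\,\d s\,\d\omega$ and $\nabla f(\eta)(v(\eta, \zeta)) = -\partial_s f(s,\omega)$, and setting $G(s) = \int_{\mathbb{S}^{d-1}} f(s,\omega)\,\d\omega$, the weighted integral in the statement rewrites as
\begin{align*}
\int_{\mathcal{B}_r(\zeta)} \nabla f(\eta)(v(\eta, \zeta))\,\frac{\sigma(d(\zeta,\eta))}{\sigma'(d(\zeta,\eta))}\,\d\sigma(\eta) = -\frac{1}{\kappa_{d-1}} \int_0^r \sigma(s)\, G'(s)\,\d s.
\end{align*}
A single integration by parts in $s$, using $\sigma(0) = 0$ together with $\sigma'(s) G(s) = \kappa_{d-1}\, g(s)$ (which follows from $\sigma'(s) = \kappa_{d-1}(\sin s)^{d-1}$ and $g(s) = (\sin s)^{d-1} G(s)$), collapses the right-hand side to $\sigma(r)\bigl(\intav{\mathcal{B}_r(\zeta)} f - \intav{\partial \mathcal{B}_r(\zeta)} f\bigr)$. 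Multiplying by the prefactor $\sigma'(r)/\sigma(r)$ (and recalling that the average introduces an additional $\sigma(r)^{-1}$) recovers $-F'(r)$ exactly.

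The main obstacle is geometric rather than analytic: one must verify that $d(\xi_t, \zeta) = r - t$ holds \emph{exactly} for all small $|t|$, not merely up to first order, which is what legitimises comparing with the single one-parameter family of concentric balls $\mathcal{B}_{r-t}(\zeta)$ and reduces the whole identity to the computation of $-F'(r)$. Once this geometric point is in place, the two remaining ingredients, namely the differentiation of $F$ and the integration by parts in geodesic polar coordinates, amount to routine bookkeeping with the weights $\sigma(s)$ and $\sigma'(s)$.
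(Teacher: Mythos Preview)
Your proof is correct and follows essentially the same strategy as the paper: extract a first-order optimality condition from the maximality of $\overline{\mathcal{B}_r(\zeta)}$ at $\xi$, express it as the vanishing of a derivative involving the difference $\intav{\mathcal{B}_r(\zeta)} f - \intav{\partial\mathcal{B}_r(\zeta)} f$, and then identify that difference with the weighted gradient integral via integration by parts in geodesic polar coordinates about $\zeta$. The integration-by-parts step is identical to the paper's.

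There is one minor but genuine difference in how the optimality condition is set up. The paper keeps $\xi$ fixed and varies the \emph{ball}: it rotates the center to $\zeta-h$ and enlarges the radius to $r+h$ so that $\xi$ stays on the boundary, then splits the difference quotient into a rotational part (identified via Lemma~\ref{Lem4 - inner integral sphere} with $\nabla\widetilde{\mathcal{M}}f(\xi)(v(\xi,{\bf e}))$) and a radial part (giving the boundary/solid average discrepancy). You instead keep the center $\zeta$ fixed and move the \emph{point} to $\xi_t$ along the common great circle, so that $d(\xi_t,\zeta)=r-t$ exactly and $\mathcal{B}_{r-t}(\zeta)$ is a competitor at $\xi_t$; the nonnegative function $t\mapsto\widetilde{\mathcal{M}}f(\xi_t)-F(r-t)$ then has a minimum at $t=0$, forcing $\nabla\widetilde{\mathcal{M}}f(\xi)(v(\xi,{\bf e}))=-F'(r)$ directly. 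Your route is slightly more self-contained (it does not appeal to Lemma~\ref{Lem4 - inner integral sphere}), while the paper's decomposition makes the link to the rotational derivative formula more explicit; but the two are equivalent via the rotational symmetry of the sphere, and the analytic content is the same.
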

\begin{proof}
Since $\widetilde{\mathcal{M}}f(\xi) > f(\xi)$ we have $r>0$. Let $S$ be the great circle determined by ${\bf e}$ and $\xi$. For small $h \in \R$ we consider a rotation $R_h$ of angle $h$ in this circle (in the direction from $\xi$ to ${\bf e}$) leaving the orthogonal complement in $\R^{d+1}$ invariant, and write $\zeta - h := R_h(\zeta)$. The idea is to look at the following quantity 
\begin{align}\label{Eq_1_Lem_two}
\lim_{h \to 0} \frac{\intav{\mathcal{B}_{r+h}(\zeta-h)} f - \intav{\mathcal{B}_{r}(\zeta)}f}{h} =   \lim_{h \to 0}\frac{\intav{\mathcal{B}_{r+h}(\zeta-h)} f - \intav{\mathcal{B}_{r}(\zeta-h)} f +  \intav{\mathcal{B}_{r}(\zeta-h)} f -  \intav{\mathcal{B}_{r}(\zeta)}f}{h} .
 \end{align}
In principle we do not know that the limit above exists. We shall prove that it in fact exists using the right-hand side of \eqref{Eq_1_Lem_two}. Once this is established, the left-hand side of \eqref{Eq_1_Lem_two} tells us that this limit must be zero, since the numerator is always nonpositive regardless of the sign of $h$.

From Lemma \ref{Lem4 - inner integral sphere} (in particular, see computation \eqref{05302019_23:03}) we note that 
 \begin{align}\label{Eq_2_Lem_two}
 \lim_{h \to 0} \frac{\intav{\mathcal{B}_{r}(\zeta-h)} f -  \intav{\mathcal{B}_{r}(\zeta)} f }{h}= \nabla \widetilde{\mathcal{M}}f(\xi)(v(\xi, {\bf e})).
 \end{align}
Note also that
 \begin{align}\label{Eq_3_Lem_two}
 \begin{split}
\frac{\intav{\mathcal{B}_{r+h}(\zeta-h)} f - \intav{\mathcal{B}_{r}(\zeta-h)} f }{h} & =  \frac{\frac{1}{\sigma(r+h)} - \frac{1}{\sigma(r)}}{h}\int_{\mathcal{B}_{r+h}(\zeta-h)} f \ +\  \frac{1}{\sigma(r)} \frac{\int_{\mathcal{B}_{r+h}(\zeta-h)} f  - \int_{\mathcal{B}_{r}(\zeta-h)} f}{h}\\
& \to -\frac{\sigma'(r)}{\sigma(r)^2} \int_{\mathcal{B}_{r}(\zeta)} f \ +\  \frac{1}{\sigma(r)} \int_{\partial \mathcal{B}_r(\zeta)} f
\end{split}
\end{align}
as $h \to 0$. Hence the limit in \eqref{Eq_1_Lem_two} exists and is zero. Now we consider momentarily $\zeta$ as the north pole in the computation below and proceed with the standard polar coordinates on the sphere. Writing $\eta = (\cos\theta, \omega \sin \theta)$, with $\omega \in \mathbb{S}^{d-1}$ we use integration by parts to get
\begin{align}
\begin{split}\label{Eq_4_Lem_two}
\!\!\!\!\int_{\mathcal{B}_r(\zeta)} & \nabla f (\eta) \,(-v(\eta, \zeta) )\,\frac{\sigma(d(\zeta,\eta))}{\sigma'(d(\zeta,\eta))} \,\d\sigma(\eta) = \int_{\mathbb{S}^{d-1}}\!\int_0^r \frac{\partial f}{\partial \theta} (\theta, \omega)\left( \int_0^{\theta} (\sin t)^{d-1} \d t\right) \d\theta\, \d \sigma_{d-1}(\omega)\\
& = \int_{\mathbb{S}^{d-1}} f(r, \omega) \left(\int_0^{r} (\sin t)^{d-1} \d t\right) \d \sigma_{d-1}(\omega) - \int_{\mathbb{S}^{d-1}} \int_0^r  f(\theta, \omega) (\sin \theta)^{d-1} \d\theta\, \d \sigma_{d-1}(\omega)\\
& = \frac{\sigma(r)}{\sigma'(r)} \int_{\partial \mathcal{B}_r(\zeta)} f  - \int_{\mathcal{B}_r(\zeta)} f.
\end{split}
\end{align}
The lemma then plainly follows from \eqref{Eq_1_Lem_two}, \eqref{Eq_2_Lem_two}, \eqref{Eq_3_Lem_two} and \eqref{Eq_4_Lem_two}.
\end{proof}

We now state a basic geometric lemma.

\begin{lemma}\label{Lem_12_prep}
Denote by $\triangle ABC$ a geodesic triangle with vertices $A, B, C$, opposite geodesic side lengths $a,b,c$, and (geodesic) angles $ \hat{A}, \hat{B}, \hat{C}$.
\begin{itemize}
\item[(i)] There exist universal constants $\gamma >1$ and $\rho >0$ such that for every $\triangle ABC \subset \overline{\mathcal{B}_{\rho}({\bf e})}$ we have
$$a \sin \hat{B} \leq \gamma \, b.$$
\item[(ii)] Under the same hypotheses, if $\hat{B} \leq \frac{\pi}{2}$ we have 
$$\big|c -a \,\cos \hat{B}\big|  \leq b.$$ 
\end{itemize}
\end{lemma}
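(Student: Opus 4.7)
The plan is to prove both parts directly from classical spherical trigonometry, using that the hypothesis $\triangle ABC \subset \overline{\mathcal{B}_{\rho}({\bf e})}$ forces all side lengths to be at most the diameter $2\rho$, so that for $\rho$ small enough (say $\rho \leq \pi/4$) the sides $a,b,c$ and the quantity $|c - a\cos\hat B|$ all lie in $[0,\pi/2]$, where $\sin$ is increasing, $\cos$ is strictly decreasing, and Jordan's inequality $\sin x \geq (2/\pi) x$ is available.

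For part (i), I would invoke the spherical law of sines $\sin\hat A / \sin a = \sin\hat B / \sin b$ to get
$$\sin a \, \sin\hat B \;=\; \sin b \, \sin\hat A \;\leq\; \sin b \;\leq\; b.$$
Jordan's inequality then gives $\sin a \geq (2/\pi)\, a$, whence $a \sin\hat B \leq (\pi/2)\, b$, so one may take $\gamma = \pi/2$ and $\rho = \pi/4$.

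For part (ii), the idea is to rephrase the desired bound $|c - a\cos\hat B| \leq b$ as the equivalent cosine inequality $\cos b \leq \cos(c - a\cos\hat B)$, valid once both $b$ and $|c - a\cos\hat B|$ lie in $[0,\pi/2]$. Setting $\beta = \cos\hat B \in [0,1]$ (using $\hat B \leq \pi/2$) and applying the spherical law of cosines $\cos b = \cos a \cos c + \beta \sin a \sin c$, a direct expansion together with the identity $\cos(c - a\beta) = \cos c \cos(a\beta) + \sin c \sin(a\beta)$ yields
$$\cos(c - a\beta) - \cos b \;=\; \cos c\,\bigl[\cos(a\beta) - \cos a\bigr] \;+\; \sin c\,\bigl[\sin(a\beta) - \beta \sin a\bigr].$$
The first bracket is nonnegative because $\cos$ is decreasing on $[0,\pi]$ and $a\beta \leq a$; the second bracket is nonnegative by the concavity of $\sin$ on $[0,\pi]$, which gives $\sin(a\beta) = \sin\bigl(\beta \cdot a + (1-\beta)\cdot 0\bigr) \geq \beta \sin a$. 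Since also $\cos c, \sin c \geq 0$ for $c \in [0,\pi/2]$, the right-hand side is $\geq 0$, proving the desired inequality.

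No serious obstacle is anticipated; the only mild care required is fixing $\rho$ small enough so that all relevant arguments lie in $[0,\pi/2]$, which places us in the regime where the monotonicity of $\cos$, the concavity of $\sin$, and Jordan's inequality can all be invoked. In particular $|c - a\cos\hat B| \leq \max(a,c) \leq 2\rho \leq \pi/2$ is automatic from $\rho \leq \pi/4$.
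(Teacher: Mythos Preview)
Your proof is correct and follows essentially the same approach as the paper: both parts use the spherical laws of sines and cosines in exactly the way you describe, with part (ii) reduced to the cosine inequality $\cos(c - a\cos\hat B) \geq \cos b$ via the same two term-by-term comparisons. The only cosmetic difference is that you fix explicit constants $\gamma = \pi/2$ and $\rho = \pi/4$ via Jordan's inequality, whereas the paper simply takes $\rho$ small enough that $\theta \leq \gamma \sin\theta$ on $[0,2\rho]$ for a given $\gamma > 1$.
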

\begin{proof} Part (i). By the triangle inequality we have $a \leq 2 \rho$. Then, for any $\gamma >1$ we can choose $\rho$ small so that $\sin \theta \leq \theta \leq \gamma \sin \theta$ for $0 \leq \theta \leq 2\rho$. Using the spherical law of sines we have
$$a \sin \hat{B} \leq \gamma \sin a \sin \hat{B} = \gamma \sin b \sin \hat{A} \leq \gamma \sin b  \leq \gamma b.$$

\smallskip

\noindent Part (ii). Assume that $\rho$ is small. We shall prove that $\cos (c -a \,\cos \hat{B} ) \geq \cos b$, which shall imply that $|c -a \,\cos \hat{B}| \leq b$. By the spherical law of cosines we have
$$\cos b = \cos c \cos a + \sin c \sin a \cos \hat{B}.$$
Note that
$$\cos (c -a \,\cos \hat{B} )= \cos c \,\cos (a \,\cos \hat{B}) + \sin c \,\sin (a \,\cos \hat{B}).$$
Since $0 \leq a \,\cos \hat{B} \leq a$ we have that $\cos (a \,\cos \hat{B})  \geq \cos a$. Also, by elementary calculus we have $\sin (a \,\cos \hat{B}) \geq \sin a \cos \hat{B}$, and the result plainly follows from these estimates.
\end{proof}

We conclude this part with another elementary fact. 

\begin{lemma}\label{Lem13_prep}
We have 
$$ u(t) := \frac{\int_0^t (\sin s)^{d-1}\, \d s}{t \, (\sin t)^{d-1}} =  \frac{\sigma(t)}{ t \,\sigma'(t)} \simeq_d 1$$
for $0\leq t\leq1/4$. Moreover, $u$ is a $C^{\infty}$-function in this range.
\end{lemma}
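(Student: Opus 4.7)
The plan is to prove this in two quick steps, corresponding to the two assertions: (i) the two-sided bound $u(t)\simeq_d 1$ on $[0,1/4]$, and (ii) the $C^{\infty}$ regularity. Both rely on factoring out the leading power $s^{d-1}$ from $(\sin s)^{d-1}$ and using that the remaining factor is smooth and bounded between positive constants on $[0,1/4]$.

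For the two-sided bound, I would first note that the function $s\mapsto \sin s/s$ extends continuously to $[0,1/4]$ (with value $1$ at $s=0$), is smooth there, and is strictly positive. Hence there exist constants $0<c_1(d)\leq c_2(d)<\infty$ such that $c_1\, s^{d-1}\leq (\sin s)^{d-1}\leq c_2 \, s^{d-1}$ for all $s\in[0,1/4]$. Integrating gives $(c_1/d)\,t^d \leq \int_0^t(\sin s)^{d-1}\,\d s\leq (c_2/d)\,t^d$, while the denominator satisfies $c_1\, t^d\leq t(\sin t)^{d-1}\leq c_2\, t^d$. Dividing yields $u(t)\simeq_d 1$ on $(0,1/4]$, and since $u(t)\to 1/d$ as $t\to 0^+$ (from the same estimates), the bound extends to $[0,1/4]$.

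For smoothness, the only delicate point is at $t=0$, since for $t\in(0,1/4]$ the denominator $t(\sin t)^{d-1}$ is strictly positive, so $u$ is smooth there. Near $0$, I would write $g(s):=\bigl(\sin s/s\bigr)^{d-1}$, which is an even, smooth, strictly positive function on $[0,1/4]$ with $g(0)=1$. Then
\begin{equation*}
\int_0^t (\sin s)^{d-1}\,\d s \;=\; \int_0^t s^{d-1}\,g(s)\,\d s \;=\; t^d\, h(t),
\end{equation*}
where $h(t)$ is obtained by expanding the even power series $g(s)=\sum_{k\geq 0} a_k s^{2k}$ and integrating term by term: $h(t)=\sum_{k\geq 0}\frac{a_k}{d+2k}\,t^{2k}$. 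This series has the same radius of convergence as that of $g$, hence $h$ is smooth on $[0,1/4]$. Consequently,
\begin{equation*}
u(t) \;=\; \frac{t^d\, h(t)}{t\cdot t^{d-1}\,g(t)} \;=\; \frac{h(t)}{g(t)},
\end{equation*}
a ratio of smooth functions with nonvanishing denominator, which is therefore $C^\infty$ on $[0,1/4]$; in particular $u(0)=h(0)/g(0)=1/d$.

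There is really no hard step here; the only thing one must be careful about is extracting the cancellation between the vanishing of numerator and denominator at $t=0$ cleanly, which the factorization $(\sin s)^{d-1}=s^{d-1}g(s)$ makes transparent.
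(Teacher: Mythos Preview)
Your proof is correct. Your route differs from the paper's: you factor $(\sin s)^{d-1}=s^{d-1}g(s)$ with $g(s)=(\sin s/s)^{d-1}$, integrate the power series term by term to write the numerator as $t^d h(t)$, and conclude that $u=h/g$ is a ratio of smooth functions with $g(0)=1$. The paper instead performs the substitution $\sin s = a\sin t$ to obtain the explicit identity
\[
u(t)=\frac{\sin t}{t}\int_0^1 \frac{a^{d-1}}{\sqrt{1-a^2\sin^2 t}}\,\d a,
\]
and then observes that both factors on the right are smooth and bounded above and below on $[0,1/4]$. Your approach is perhaps the more routine one (Taylor expansion and cancellation of the $t^d$), and it handles both claims at once; the paper's approach avoids power series entirely and produces a closed-form expression for $u$, which makes the two-sided bound and smoothness in $t$ immediate by inspection of the integrand. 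Either argument is perfectly adequate for this elementary lemma.
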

\begin{proof}
Note that 
\begin{align*}
 \frac{\int_0^t (\sin s)^{d-1}\, \d s}{t (\sin t)^{d-1}} & = \frac{1}{t} \int_0^t \left(\frac{\sin s}{\sin t}\right)^{d-1}\,\d s = \frac{\sin t}{t} \int_0^1 a^{d-1} \frac{1}{(1 - a^2 (\sin t)^2)^{1/2}}\,\d a\,,
\end{align*}
and both $t \mapsto \frac{\sin t}{t}$ and $t\mapsto \int_0^1 a^{d-1} \frac{1}{(1 - a^2 (\sin t)^2)^{1/2}}\,\d a$ are smooth functions bounded above and below in the proposed range.
\end{proof}

\subsubsection{Estimates for large radii - main lemma} We are now in position to prove the key result of this subsection.

\begin{lemma}\label{crucial_lemma_large_radii} Let $\xi \in \mathbb{S}^d \setminus \{{\bf e}, {\bf -e}\}$ and let $\overline{\mathcal{B}_r(\zeta)}  \in {\bf B}_{\xi}$, with $\zeta$ in the half great circle determined by ${\bf e}$, $\xi$ and $-{\bf e}$. Assume that $0 \leq \theta(\zeta) < \theta(\xi)$, that $\xi \in \partial \mathcal{B}_r(\zeta)$, that $\widetilde{\mathcal{M}}f(\xi) > f(\xi)$ and that $\widetilde{\mathcal{M}}f$ is differentiable at $\xi$. There is a universal constant $\rho >0 $ such that if $\mathcal{B} = \mathcal{B}_r(\zeta) \subset \overline{\mathcal{B}_{\rho}({\bf e})}$ then
\begin{align}\label{Est_crucial_lemma}
\big| \nabla \widetilde{\mathcal{M}}f(\xi)\big| \lesssim_d \frac{1}{\theta(\xi)} \intav{\mathcal{B} } |\nabla f (\eta)| \,\theta(\eta)\,\d\sigma(\eta) +  \frac{r \,\theta(\zeta)}{\theta(\xi)} \intav{\mathcal{B} } |\nabla f (\eta)|\,\d\sigma(\eta).
\end{align}
\end{lemma}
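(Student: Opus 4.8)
The plan is to start from the exact formula for $\nabla \widetilde{\mathcal{M}}f(\xi)$ provided by Lemma \ref{Lem_10_prep}, namely
$$\nabla \widetilde{\mathcal{M}}f(\xi)(v(\xi,{\bf e})) = \frac{\sigma'(r)}{\sigma(r)} \intav{\mathcal{B}_r(\zeta)} \nabla f(\eta)(v(\eta,\zeta))\,\frac{\sigma(d(\zeta,\eta))}{\sigma'(d(\zeta,\eta))}\,\d\sigma(\eta),$$
and to bound the right-hand side. Since $v(\xi,{\bf e})$ is a unit vector tangent to $\xi$, and by the polar symmetry of $\widetilde{\mathcal{M}}f$ the gradient $\nabla \widetilde{\mathcal{M}}f(\xi)$ points in the $\pm v(\xi,{\bf e})$ direction, we have $|\nabla \widetilde{\mathcal{M}}f(\xi)| = |\nabla \widetilde{\mathcal{M}}f(\xi)(v(\xi,{\bf e}))|$, so it suffices to estimate the integral. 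The two main ingredients to plug in are: (a) the ratio $\sigma(s)/\sigma'(s) \simeq_d s$ for $s$ small, from Lemma \ref{Lem13_prep} (applied with $s = d(\zeta,\eta) \leq 2r \leq 2\rho$, taking $\rho \leq 1/8$), which lets us replace $\frac{\sigma(d(\zeta,\eta))}{\sigma'(d(\zeta,\eta))}$ by $d(\zeta,\eta)$ up to constants, and similarly $\frac{\sigma'(r)}{\sigma(r)} \simeq_d \frac{1}{r}$; and (b) the geometric estimate of Lemma \ref{Lem_12_prep}, which controls the angular quantities appearing when one decomposes $d(\zeta,\eta)$ in terms of $\theta(\eta)$, $\theta(\zeta)$ and the angle at the relevant vertex.

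The crux is the following. After step (a), we must bound
$$\frac{1}{r}\intav{\mathcal{B}_r(\zeta)} |\nabla f(\eta)|\, d(\zeta,\eta)\,\d\sigma(\eta)$$
by the right-hand side of \eqref{Est_crucial_lemma}. The point of the target inequality is that the weight should be essentially $\theta(\eta)/\theta(\xi)$ rather than $d(\zeta,\eta)/r$; these differ because $\zeta$ is not the north pole. To pass between them, consider the geodesic triangle $\triangle$ with vertices ${\bf e}$, $\zeta$, $\eta$: its side opposite to ${\bf e}$ has length $d(\zeta,\eta)$, the side opposite to $\eta$ has length $\theta(\zeta)$, and the side opposite to $\zeta$ has length $\theta(\eta)$. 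Write $\hat{E}$ for the angle at ${\bf e}$. By the spherical law of cosines (or directly by Lemma \ref{Lem_12_prep}(ii), with $B = {\bf e}$, $b = \theta(\zeta)$, $a = \theta(\eta)$, $c = d(\zeta,\eta)$ — valid since $\hat{E} \leq \pi/2$ once $\rho$ is small, because both $\zeta$ and $\eta$ lie near ${\bf e}$ and $\zeta$ is on the geodesic from ${\bf e}$ toward $\xi$ while $\eta$ ranges over $\mathcal{B}_r(\zeta)$), we get
$$\big| d(\zeta,\eta) - \theta(\eta)\cos\hat{E}\big| \leq \theta(\zeta),$$
hence $d(\zeta,\eta) \leq \theta(\eta) + \theta(\zeta)$. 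Therefore
$$\frac{1}{r}\intav{\mathcal{B}_r(\zeta)} |\nabla f(\eta)|\, d(\zeta,\eta)\,\d\sigma(\eta) \leq \frac{1}{r}\intav{\mathcal{B}_r(\zeta)} |\nabla f(\eta)|\,\theta(\eta)\,\d\sigma(\eta) + \frac{\theta(\zeta)}{r}\intav{\mathcal{B}_r(\zeta)} |\nabla f(\eta)|\,\d\sigma(\eta).$$
Finally, since $\xi \in \partial\mathcal{B}_r(\zeta)$ and $\zeta$ lies between ${\bf e}$ and $\xi$ on the great circle, we have $\theta(\xi) = \theta(\zeta) + r \geq r$, so $\frac{1}{r} \leq \frac{2}{\theta(\xi)}$ when $\theta(\zeta) \leq r$, and more carefully $\frac{1}{r} \simeq \frac{\theta(\xi) - \theta(\zeta)}{r\,\theta(\xi)} \cdot \frac{1}{1 - \theta(\zeta)/\theta(\xi)}$; in any case $r \geq \theta(\xi)/2$ is false in general, but $\theta(\xi) = \theta(\zeta)+r$ gives directly $\frac{1}{r} = \frac{\theta(\xi)}{r\,\theta(\xi)} = \frac{\theta(\zeta)+r}{r\,\theta(\xi)} = \frac{1}{\theta(\xi)} + \frac{\theta(\zeta)}{r\,\theta(\xi)}$. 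Substituting this into the two terms above produces exactly
$$\frac{1}{\theta(\xi)}\intav{\mathcal{B}}|\nabla f(\eta)|\,\theta(\eta)\,\d\sigma(\eta) + \frac{\theta(\zeta)}{r\,\theta(\xi)}\intav{\mathcal{B}}|\nabla f(\eta)|\,\theta(\eta)\,\d\sigma(\eta) + \frac{\theta(\zeta)}{\theta(\xi)}\intav{\mathcal{B}}|\nabla f(\eta)|\,\d\sigma(\eta) + \frac{\theta(\zeta)^2}{r\,\theta(\xi)}\intav{\mathcal{B}}|\nabla f(\eta)|\,\d\sigma(\eta);$$
on $\mathcal{B} = \mathcal{B}_r(\zeta)$ one has $\theta(\eta) \leq \theta(\zeta) + r$ and, when $\theta(\zeta) \leq r$, $\theta(\eta) \lesssim r$, so the second term is $\lesssim_d \frac{\theta(\zeta)}{\theta(\xi)}\intav{\mathcal{B}}|\nabla f|$ and the fourth is $\lesssim_d \frac{\theta(\zeta)\cdot r}{\theta(\xi)}\intav{\mathcal{B}}|\nabla f| \le \frac{r\,\theta(\zeta)}{\theta(\xi)}\intav{\mathcal{B}}|\nabla f|$ (and when $\theta(\zeta) > r$ one argues that the ball $\mathcal{B}_r(\zeta)$ does not reach ${\bf e}$, so $\theta(\eta) \simeq \theta(\zeta)$ on $\mathcal{B}$, and the bookkeeping is even easier), recovering \eqref{Est_crucial_lemma}.

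I expect the main obstacle to be the careful justification that Lemma \ref{Lem_12_prep}(ii) applies, i.e.\ that the angle $\hat{E}$ at ${\bf e}$ in the triangle $\triangle {\bf e}\zeta\eta$ is at most $\pi/2$ for all $\eta \in \mathcal{B}_r(\zeta)$ — this needs the smallness of $\rho$ together with the fact that $\zeta$ lies on the geodesic ray from ${\bf e}$ through $\xi$ and $r < \theta(\xi) \le \rho$ — and the splitting into the cases $\theta(\zeta) \le r$ versus $\theta(\zeta) > r$ to control $\theta(\eta)$ on $\mathcal{B}$ and thereby absorb the cross terms cleanly into the two terms on the right of \eqref{Est_crucial_lemma}. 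Everything else is substitution of the asymptotics from Lemmas \ref{Lem13_prep} and \ref{Lem_12_prep} into the identity of Lemma \ref{Lem_10_prep}.
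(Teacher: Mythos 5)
There is a genuine gap at the crux of your argument. After reducing to $\frac{1}{r}\intav{\mathcal{B}}|\nabla f(\eta)|\,d(\zeta,\eta)\,\d\sigma(\eta)$ via Lemmas \ref{Lem_10_prep} and \ref{Lem13_prep}, you apply the pointwise bounds $d(\zeta,\eta)\le\theta(\eta)+\theta(\zeta)$ and $\frac1r=\frac{1}{\theta(\xi)}+\frac{\theta(\zeta)}{r\,\theta(\xi)}$; this produces, among others, the term $\frac{\theta(\zeta)}{\theta(\xi)}\intav{\mathcal{B}}|\nabla f|$, and that term is \emph{not} dominated by the right-hand side of \eqref{Est_crucial_lemma}. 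It exceeds the second term of \eqref{Est_crucial_lemma} by the factor $1/r$ — and that extra factor $r$ is precisely what the lemma is for: it is what makes the integral \eqref{05172019_17:48} in Step 2 of the proof of Theorem \ref{Thm2-sphere} converge (without it one gets $\int_0^\rho (\sin\theta)^{d-1}/\sigma(\theta)\,\d\theta\simeq\int_0^\rho \d\theta/\theta=\infty$). Nor can it be absorbed into the first term in the problematic regime $\theta(\zeta)\lesssim r$ (exactly the regime this lemma must handle, since $w(\zeta)<4r$ there): take $\theta(\zeta)\simeq r$, so that ${\bf e}$ lies in or near $\mathcal{B}$, and $|\nabla f|$ concentrated where $\theta(\eta)\approx 0$; then your term is $\simeq\intav{\mathcal{B}}|\nabla f|$ while the whole right-hand side of \eqref{Est_crucial_lemma} is $O(r)\intav{\mathcal{B}}|\nabla f|$. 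Indeed no pointwise comparison of weights can work: for $\eta\in\partial\mathcal{B}$ on the side facing ${\bf e}$ with $\theta(\zeta)=r$ one has $d(\zeta,\eta)/r=1$ but $\theta(\eta)/\theta(\xi)+r\,\theta(\zeta)/\theta(\xi)\simeq r$.

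The lemma genuinely requires cancellation, which the absolute-value step destroys. The paper's proof keeps the signed, vector-valued integrand and uses \emph{both} exact representations of $\nabla\widetilde{\mathcal{M}}f(\xi)(-v(\xi,{\bf e}))$: the one from Lemma \ref{Lem_10_prep} and the one from Lemma \ref{Lem4 - inner integral sphere} in terms of $S(\eta)$. Writing $1=\frac{\theta(\xi)-\theta(\zeta)}{r}$ inside the latter, it shows that the resulting $\frac{\theta(\zeta)}{r}$-piece cancels against the Lemma \ref{Lem_10_prep} representation up to errors of size $\intav{\mathcal{B}}|\nabla f|\,\theta(\zeta)\,\d\sigma$ and $\frac1r\intav{\mathcal{B}}|\nabla f|\,\theta(\eta)\,\d\sigma$; establishing this cancellation is where the vector comparison of $-v(\eta,\zeta)$ with $v_1(\eta)=S(\eta)/|S(\eta)|$ and the quantitative geometry of Lemma \ref{Lem_12_prep} actually enter (not merely through the triangle inequality). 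Your starting point is the right one, but the subsequent bookkeeping cannot be repaired without reinstating this cancellation.
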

\begin{proof}
From Lemma \ref{Lem_10_prep} we have
\begin{align}\label{Rep_1}
\nabla \widetilde{\mathcal{M}}f(\xi)(-v(\xi, {\bf e})) = \frac{\sigma'(r)}{\sigma(r)} \intav{\mathcal{B}} \nabla f (\eta) (-v(\eta, \zeta) )\,\frac{\sigma(d(\zeta,\eta))}{\sigma'(d(\zeta,\eta))}  \,\d\sigma(\eta).
\end{align} 
In the case $\zeta = {\bf e}$, estimate \eqref{Est_crucial_lemma} follows directly from \eqref{Rep_1} and Lemma \ref{Lem13_prep}. From now on we assume that $\zeta \neq  {\bf e}$. From Lemma \ref{Lem4 - inner integral sphere} we also know that 
\begin{align}\label{Rep_2}
\nabla \widetilde{\mathcal{M}}f(\xi)(-v(\xi, {\bf e})) = \intav{\mathcal{B}} \nabla f(\eta) \,S(\eta) \,\d \sigma(\eta),
\end{align}
with $S(\eta) = (\eta\cdot v(\xi, {\bf e}))\xi - (\eta\cdot \xi)v(\xi, {\bf e}).$ The idea is to compare the identities \eqref{Rep_1} and \eqref{Rep_2} in order to bound $\big|\nabla \widetilde{\mathcal{M}}f(\xi)\big| = \big|\nabla \widetilde{\mathcal{M}}f(\xi)(-v(\xi, {\bf e}))\big|$. To do so, we write the right-hand side of \eqref{Rep_2} as a sum of three terms, one being comparable to $\big|\nabla \widetilde{\mathcal{M}}f(\xi)\big|$, the second one being small, and the third one being close to the right-hand side of \eqref{Rep_1} in a suitable sense. We start by writing 
$$1 = \frac{\theta(\xi) - \theta(\zeta)}{r} = \frac{d({\bf e},\xi) - d({\bf e},\zeta)}{r}.$$
Let us define $v_1(\eta) = S(\eta) / |S(\eta)|$. We then have
\begin{align}\label{rep_3}
\begin{split}
\intav{\mathcal{B}} \nabla f(\eta) \,S(\eta) \,\d \sigma(\eta) & = \intav{\mathcal{B}} \nabla f(\eta) \,|S(\eta)|  \left(\frac{\theta(\xi) - \theta(\zeta)}{r}\right)v_1(\eta)\, \d \sigma(\eta)\\
& = \intav{\mathcal{B}} \nabla f(\eta) \,|S(\eta)| \, \frac{\theta(\xi)}{r}\,v_1(\eta)\, \d \sigma(\eta) \\
&  \ \ \  \ - \intav{\mathcal{B}} \nabla f(\eta) \,\big(|S(\eta)| - 1\big) \frac{\theta(\zeta)}{r}\,v_1(\eta)\, \d \sigma(\eta) - \intav{\mathcal{B}} \nabla f(\eta)  \frac{\theta(\zeta)}{r}\,v_1(\eta)\, \d \sigma(\eta).
\end{split}
\end{align}

\noindent {\it Step 1}. Let us start by bounding the quantity
$$\frac{\sigma'(r)}{\sigma(r)} \intav{\mathcal{B}} \nabla f (\eta) (-v(\eta, \zeta) )\,\frac{\sigma(d(\zeta,\eta))}{\sigma'(d(\zeta,\eta))}  \,\d\sigma(\eta) +\, \intav{\mathcal{B}} \nabla f(\eta)  \frac{\theta(\zeta)}{r}\,v_1(\eta)\, \d \sigma(\eta).$$
This last expression is equal to (recall the definition of $u$ in Lemma \ref{Lem13_prep})
\begin{align}\label{05172019_14:24}
\frac{\sigma'(r)}{\sigma(r)} \intav{\mathcal{B}} \nabla f (\eta) \big[d(\zeta,\eta)\, u(d(\zeta,\eta))\, (-v(\eta, \zeta)) + d({\bf e},\zeta) \,u(r)\,v_1(\eta) \big]  \,\d\sigma(\eta).
\end{align}

Note now that 
\begin{align*}
d(\zeta,\eta)\, u(d(\zeta,\eta))\, (-v(\eta, \zeta)) + d({\bf e},\zeta) \,u(r)\,v_1(\eta) & = u(d(\zeta,\eta)) \big[ d(\zeta,\eta)(-v(\eta, \zeta))  + d({\bf e},\zeta) v_1(\eta)\big] \\
& \ \ \ \  - d({\bf e},\zeta)  \big[ u(d(\zeta,\eta)) - u(r)\big] v_1(\eta).
\end{align*}

From Lemma \ref{Lem13_prep} we know that $u(t)$ is Lipschitz for $0\leq t \leq 1/4$. We then have $|u(d(\zeta,\eta)) - u(r)| \lesssim_d  r$ and another application of Lemma \ref{Lem13_prep} yields
\begin{align}\label{rep_5}
\frac{\sigma'(r)}{\sigma(r)} \left|\,\intav{\mathcal{B}} \nabla f (\eta)\, d({\bf e},\zeta)  \big[ u(d(\zeta,\eta)) - u(r)\big] v_1(\eta) \,\d\sigma(\eta)\right| \lesssim_d \intav{\mathcal{B}} |\nabla f (\eta)|\,d({\bf e},\zeta) \,\d\sigma(\eta).
\end{align}

Let us now deal with the remaining piece. Observe that 
\begin{align}
&d(\zeta,\eta)\,(-v(\eta, \zeta)) + d({\bf e},\zeta) \,v_1(\eta) = d(\zeta,\eta)\big( v_1(\eta)\cos \alpha + v_1(\eta)^*\sin \alpha\big) + d({\bf e},\zeta) \,v_1(\eta) \nonumber \\
& \ \ = \big[d(\zeta,\eta) v_1(\eta)\cos \beta + d({\bf e},\zeta) \,v_1(\eta)\big] + \big[d(\zeta,\eta)v_1(\eta)^*\sin \alpha\big]  + \big[d(\zeta,\eta)v_1(\eta)(\cos \alpha - \cos \beta)\big] \nonumber \\
& \ \ = [I] + [II] + [III], \label{three_terms}
\end{align}
where $\cos \alpha = -v(\eta, \zeta) \cdot v_1(\eta)$ ($0 \leq \alpha \leq \pi)$, $v_1(\eta)^*$ is unitary and orthogonal to $v_1(\eta)$ (in the plane determined by $v_1(\eta)$ and $v(\eta, \zeta)$), and $\cos \beta = v(\zeta, \eta) \cdot (-v(\zeta, {\bf e}))$ ($0 \leq \beta \leq \pi)$. Naturally, we may assume without loss of generality that $\eta \neq \zeta$. We now proceed with the analysis of the three terms in \eqref{three_terms}.

\smallskip

\noindent {\it Analysis of $[I]$.} Observe that 
$$|d(\zeta,\eta) v_1(\eta)\cos \beta + d({\bf e},\zeta) \,v_1(\eta)| = |d(\zeta,\eta)\cos \beta + d({\bf e},\zeta)|.$$
Consider the geodesic triangle with vertices ${\bf e}, \zeta, \eta$ (that has angle $\angle {\bf e} \zeta \eta = \pi - \beta$). Assuming $\rho$ small, if $\beta > \pi/2$ we may use Lemma \ref{Lem_12_prep} (ii) to find 
$$|d(\zeta,\eta)\cos \beta + d({\bf e},\zeta)| \leq d({\bf e},\eta).$$
In case $0 \leq \beta \leq \pi/2$ we have
\begin{align*}
0 \leq {\rm sgn}(\cos \beta) = {\rm sgn}\big[(\eta - (\zeta \cdot \eta)\zeta)\cdot (-{\bf e} + (\zeta \cdot {\bf e})\zeta)\big] = {\rm sgn} \big[\!-(\eta \cdot {\bf e}) + (\zeta \cdot {\bf e})(\zeta \cdot \eta)\big],
\end{align*}
which implies that 
\begin{align*}
\cos(\theta (\zeta)) =  (\zeta \cdot {\bf e}) \geq (\zeta \cdot {\bf e})(\zeta \cdot \eta) \geq (\eta \cdot {\bf e}) = \cos(\theta(\eta)).
\end{align*}
From this we conclude that $d({\bf e}, \zeta) = \theta (\zeta) \leq \theta (\eta) = d({\bf e}, \eta)$ and hence
\begin{align*}
|d(\zeta,\eta)\cos \beta + d({\bf e},\zeta)| \leq d(\zeta,\eta) + d({\bf e},\zeta) \leq (d({\bf e}, \zeta)  + d({\bf e}, \eta)) + d({\bf e},\zeta) \leq 3d({\bf e},\eta).
\end{align*}

\smallskip

\noindent {\it Analysis of $[II]$ and $[III]$.} We note that the angles $\alpha$ and $\beta$ are close, and it is important for our purposes to actually quantify this discrepancy. In order to do this, let us parametrize the points as follows. We write $\zeta = (\cos \theta, \sin \theta, {\bf 0})$, with ${\bf 0}  \in \R^{d-1}$, and $\eta = (\cos \theta_1, \sin \theta_1 \cos \varphi, \sin \theta_1 \sin \varphi\,\, \omega)$ with $\omega \in \mathbb{S}^{d-2} \subset \R^{d-1}$. Here we set $0 \leq \theta, \theta_1, \varphi \leq \pi$. Recall that in this notation we have ${\bf e} = (1,0, {\bf 0})$. We then have $-v(\zeta, {\bf e}) = (-\sin \theta, \cos \theta, {\bf 0})$. Recall also that the vector $v_1(\eta)$ is the unitary vector tangent to $\eta$ in the direction of the derivative of the curve that takes the point $\eta$ along the rotation in the first two coordinates (in the direction from ${\bf e}$ to $\zeta$). A direct computation yields
\begin{equation}\label{05172019_14:36}
S(\eta) = (-\sin \theta_1 \cos \varphi, \cos \theta_1, {\bf 0})
\end{equation}
and
$$v_1(\eta) = \frac{1}{\sqrt{1 - \sin^2 \theta_1\sin^2\varphi}} (-\sin \theta_1 \cos \varphi, \cos \theta_1, {\bf 0}).$$
Using that $v(\zeta, {\bf e}) \perp \zeta$ and $v_1(\eta) \perp \eta$ we then find
\begin{align*}
\cos \beta = v(\zeta, \eta) \cdot (-v(\zeta, {\bf e})) = \frac{\eta - (\eta \cdot \zeta)\zeta}{ |\eta- (\eta \cdot \zeta)\zeta|} \cdot (-v(\zeta, {\bf e})) = \frac{-\sin \theta \cos \theta_1+ \cos \theta \sin \theta_1\cos\varphi}{|\eta- (\eta \cdot \zeta)\zeta|}
\end{align*}
and
\begin{align*}
\cos \alpha = -v(\eta, \zeta) \cdot v_1(\eta) = \frac{-\zeta + (\eta \cdot \zeta)\eta}{ |-\zeta + (\eta \cdot \zeta)\eta|}\cdot v_1(\eta) = \frac{-\sin \theta \cos \theta_1+ \cos \theta \sin \theta_1\cos\varphi}{\sqrt{1 - \sin^2 \theta_1\sin^2\varphi} \,\, |\!-\zeta + (\eta \cdot \zeta)\eta|}.
\end{align*}
Since $|\eta- (\eta \cdot \zeta)\zeta| =  |-\zeta + (\eta \cdot \zeta)\eta| = \sqrt{1 - (\eta \cdot \zeta)^2}$, we plainly obtain that $|\cos \beta| \leq |\cos\alpha|$ and hence $\sin \alpha \leq \sin \beta$. Using Lemma \ref{Lem_12_prep} (i) we then find
\begin{align*}
|d(\zeta,\eta)v_1(\eta)^*\sin \alpha| \leq d(\zeta,\eta)\sin \beta \lesssim  d({\bf e}, \eta).
\end{align*}
This takes care of the term $[II]$ in \eqref{three_terms}. Finally, we recall that all the action takes place inside a small ball $\mathcal{B}_{\rho}({\bf e})$, which means that the angles $\theta$ and $\theta_1$ are small. This yields an estimate for the term $[III]$ of the form
\begin{align*}
|d(\zeta,\eta)& v_1(\eta)(\cos \alpha - \cos \beta)  |\lesssim  |\zeta - \eta||\cos \alpha - \cos \beta| \\
& = \frac{ \sqrt{2(1 - (\eta \cdot \zeta))}}{\sqrt{1 - (\eta \cdot \zeta)^2}} \ |\!-\sin \theta \cos \theta_1+ \cos \theta \sin \theta_1\cos\varphi|\left(\frac{1}{\sqrt{1 - \sin^2 \theta_1\sin^2\varphi}} - 1 \right)\\
& \lesssim \sin^2\theta_1\\
& \lesssim \theta_1 = d(\bf{e}, \eta).
\end{align*}
Combining \eqref{05172019_14:24}, \eqref{rep_5} and the bounds for the terms $[I], [II], [III]$ in \eqref{three_terms}, and using Lemma \ref{Lem13_prep}, we arrive at 
\begin{align}\label{05172019_14:51}
\begin{split}
& \left|\frac{\sigma'(r)}{\sigma(r)} \intav{\mathcal{B}} \nabla f (\eta) (-v(\eta, \zeta) )\,\frac{\sigma(d(\zeta,\eta))}{\sigma'(d(\zeta,\eta))}  \,\d\sigma(\eta) +\, \intav{\mathcal{B}} \nabla f(\eta)  \frac{\theta(\zeta)}{r}\,v_1(\eta)\, \d \sigma(\eta) \right| \\
&  \ \ \ \ \ \ \ \ \ \ \lesssim_d \intav{\mathcal{B}} |\nabla f (\eta)|\,\theta(\zeta) \,\d\sigma(\eta) + \frac{1}{r} \intav{\mathcal{B}} |\nabla f (\eta)|\,\theta(\eta) \,\d\sigma(\eta).
\end{split}
\end{align}

\noindent {\it Step 2}. We continue our analysis with the term
$$\intav{\mathcal{B}} \nabla f(\eta) \,\big(|S(\eta)| - 1\big) \frac{\theta(\zeta)}{r}\,v_1(\eta)\, \d \sigma(\eta).$$
From \eqref{05172019_14:36} we know that $|S(\eta)|^2 = \eta \cdot p(\eta)$, where $p(\eta)$ is the projection of $\eta$ over the plane generated by $\zeta$ and ${\bf e}$. Therefore
\begin{align}\label{05172019_14:48}
\begin{split}
& \left|\,\intav{\mathcal{B}} \nabla f(\eta) \,\big(|S(\eta)| - 1\big) \frac{\theta(\zeta)}{r}\,v_1(\eta)\, \d \sigma(\eta)\right|  \leq \intav{\mathcal{B}} |\nabla f(\eta)| \,\big(1 - |S(\eta)|^2\big) \frac{\theta(\zeta)}{r}\, \d \sigma(\eta) \\
&  \ \ \ \ \ \ \ \  \leq \intav{\mathcal{B}} \big|\nabla f(\eta)\big| \, \big|\eta \cdot (\eta - p(\eta))\big| \frac{\theta(\zeta)}{r}\, \d \sigma(\eta)  \\
& \ \ \ \ \ \ \ \  \leq \intav{\mathcal{B}} \big|\nabla f(\eta)\big| \, |\eta - p(\eta)| \frac{\theta(\zeta)}{r}\, \d \sigma(\eta)\\
& \ \ \ \ \ \ \ \ \leq \intav{\mathcal{B}} \big|\nabla f(\eta)\big| \,\theta(\zeta)\, \d \sigma(\eta). 
\end{split}
\end{align}

\noindent {\it Step 3}. Combining \eqref{Rep_1}, \eqref{Rep_2}, \eqref{rep_3}, \eqref{05172019_14:51} and \eqref{05172019_14:48} we find that 
\begin{align*}
\left|\,\intav{\mathcal{B}} \nabla f(\eta) \,|S(\eta)|  \frac{\theta(\xi)}{r}\,v_1(\eta)\, \d \sigma(\eta) \right| \ \lesssim_d \ \intav{\mathcal{B}} |\nabla f (\eta)|\,\theta(\zeta) \,\d\sigma(\eta) + \frac{1}{r} \intav{\mathcal{B}} |\nabla f (\eta)|\,\theta(\eta) \,\d\sigma(\eta)\,,
\end{align*}
and therefore
\begin{align*}
\left|\nabla \widetilde{\mathcal{M}}f(\xi)\right| & = \left|\,\intav{\,\mathcal{B}} \nabla f(\eta) \,S(\eta) \,\d \sigma(\eta) \right| \\
& \lesssim_d \frac{1}{\theta(\xi)} \intav{\mathcal{B} } |\nabla f (\eta)| \,\theta(\eta)\,\d\sigma(\eta) +  \frac{r \,\theta(\zeta)}{\theta(\xi)} \intav{\mathcal{B} } |\nabla f (\eta)|\,\d\sigma(\eta).
\end{align*}
This concludes the proof of the lemma.
\end{proof}

\subsubsection{Proof of Theorem \ref{Thm2-sphere} - Lipschitz case} We are now in position to move on to the proof of Theorem \ref{Thm2-sphere} when our initial datum $f$ is a Lipschitz function. In this case we also have $\widetilde{\mathcal{M}}f$ Lipschitz.
Consider the set $\mathcal{H}_d = \{\xi \in \mathbb{S}^{d} \ : \ \widetilde{\mathcal{M}}f(\xi) > \widetilde{\mathcal{M}}^If(\xi)\}$. In light of Proposition \ref{Prop_M_aux} it suffices to show that 
$$\int_{\mathcal{H}_d} \big| \nabla\widetilde{\mathcal{M}}f(\xi)\big| \,\d \sigma(\xi)  \lesssim_d \int_{\mathbb{S}^d} |\nabla f(\xi)|\,\d \sigma(\xi).$$
For each $\xi \in \mathbb{S}^{d} \setminus \{{\bf e}, {\bf -e}\}$ let us choose a ball $\overline{\mathcal{B}_{r_{\xi}}(\zeta_{\xi})} \in {\bf B}_{\xi}$ with $r_{\xi}$ minimal and, subject to this condition, with $\zeta_{\xi}$ in the half great circle connecting ${\bf e}, \xi, {\bf -e}$ in a way that $w(\zeta_{\xi}) = \min \{d({\bf e},\zeta_\xi), d({\bf -e},\zeta_\xi)\}$ is minimal. If there are two potential choices for $\zeta_{\xi}$ we choose the one with $0 \leq \theta(\zeta_{\xi}) \leq \theta(\xi)$.

\smallskip

First let us observe that we can restrict our attention to small balls. For $c>0$, define the set $\mathcal{R}_c = \{ \xi \in \mathcal{H}_d \setminus \{{\bf e}, {\bf -e}\}  \ : \  r_{\xi} \geq c \}$. By Lemma \ref{Lem4 - inner integral sphere} we find
\begin{align*}
\int_{\mathcal{R}_c} \big| \nabla \widetilde{\mathcal{M}}f(\xi)\big| \,\d \sigma(\xi) & \leq  \int_{\mathcal{R}_c} \,\frac{1}{\sigma(\mathcal{B}_{r_{\xi}}(\zeta_{\xi}))}\int_{\mathcal{B}_{r_{\xi}}(\zeta_{\xi})} |\nabla f(\eta)| \,\d \sigma(\eta)\, \d \sigma(\xi) \lesssim_{c,d} \int_{\mathbb{S}^d} |\nabla f(\eta)|\,\d \sigma(\eta).
\end{align*}
If $\xi \in \mathcal{H}_d \setminus \{{\bf e}, {\bf -e}\}$ and $r_{\xi}$ is small we must have $w(\zeta_{\xi}) < 4r_{\xi}$ (otherwise we would fall in the regime of the operator $\widetilde{\mathcal{M}}^I$). Assuming that $\xi \in \mathcal{H}_d \setminus \{{\bf e}, {\bf -e}\}$, that $\widetilde{\mathcal{M}}f$ is differentiable at $\xi$, and that $\nabla \widetilde{\mathcal{M}}f(\xi) \neq 0$ (which implies that $\xi \in \partial \mathcal{B}_{r_{\xi}}(\zeta_{\xi})$), we may restrict ourselves to the situation where $d({\bf e},\xi)\leq \rho$ or $d({\bf -e},\xi) \leq \rho$ (where $\rho$ is given by Lemma \ref{crucial_lemma_large_radii}). By symmetry let us assume that $\theta(\xi) = d({\bf e},\xi)\leq \rho$. We call such set $\mathcal{G}_d$ and further decompose it in $\mathcal{G}_d^- = \{\xi \in \mathcal{G}_d  \ : \ 0 \leq \theta(\zeta_{\xi}) < \theta(\xi)\}$ and $\mathcal{G}_d^+ = \{\xi \in \mathcal{G}_d  \ : \ 0 < \theta(\xi) < \theta(\zeta_{\xi})\}$. We bound the integrals over these two sets separately. 

\smallskip

\noindent {\it Step 1}. For $\mathcal{G}_d^+$ we use Lemma \ref{Lem4 - inner integral sphere} and proceed as follows:
\begin{align}\label{05172019_16:58}
\begin{split}
\int_{\mathcal{G}_d^+} \big| \nabla \widetilde{\mathcal{M}}f(\xi)\big| \,\d \sigma(\xi) & \leq \int_{\mathcal{G}_d^+} \, \intav{\mathcal{B}_{r_{\xi}}(\zeta_{\xi})} |\nabla f(\eta)| \,\d\sigma(\eta) \,\d \sigma(\xi)\\
& = \int_{\mathbb{S}^d} |\nabla f(\eta)| \int_{\mathcal{G}_d^+} \frac{\chi_{\mathcal{B}_{r_{\xi}}(\zeta_{\xi})}(\eta)}{\sigma(\mathcal{B}_{r_{\xi}}(\zeta_{\xi}))}\,\d\sigma(\xi)\,\d\sigma(\eta).
\end{split}
\end{align}
Note that $\theta(\eta) \geq \theta(\xi)$ in this case. Observe that 
\begin{equation}\label{05172019_16:54}
r_{\xi} > \frac{w(\zeta_{\xi})}{4} = \frac{\theta(\zeta_{\xi})}{4} \geq \frac{\theta(\xi)}{4},
\end{equation}
and also, by triangle inequality,
\begin{equation}\label{05172019_16:55}
r_{\xi} \geq \frac{d(\eta, \xi)}{2} \geq \frac{\theta(\eta)}{2} - \frac{\theta(\xi)}{2}.
\end{equation}
Dividing \eqref{05172019_16:55} by $2$ and adding up to \eqref{05172019_16:54} we get 
$$r_{\xi} \geq \frac{\theta(\eta)}{6}.$$
Returning to the computation \eqref{05172019_16:58} we have, for a fixed $\eta$, 
$$\int_{\mathcal{G}_d^+} \frac{\chi_{\mathcal{B}_{r_{\xi}}(\zeta_{\xi})}(\eta)}{\sigma(\mathcal{B}_{r_{\xi}}(\zeta_{\xi}))}\,\d\sigma(\xi) \leq \int_{\mathcal{B}_{\theta(\eta)}({\bf e})}\frac{1}{\sigma\big(\frac{\theta(\eta)} {6}\big)}\,\d\sigma(\xi) \simeq_d 1 ,$$
from which the required bound follows.

\smallskip

\noindent {\it Step 2}. We now bound the integral over $\mathcal{G}_d^-$ using Lemma 
\ref{crucial_lemma_large_radii}. If $\xi \in \mathcal{G}_d^-$ then
\begin{align}\label{05172019_17:30}
r_{\xi} \leq \theta(\xi) < 5 r_{\xi}.
\end{align}
We then have
\begin{align}\label{05172019_17:47}
\begin{split}
& \int_{\mathcal{G}_d^-} \big| \nabla \widetilde{\mathcal{M}}f(\xi)\big| \,\d \sigma(\xi) \lesssim_d \int_{\mathcal{G}_d^-}  \left(\frac{1}{\theta(\xi)} \intav{\mathcal{B}_{r_{\xi}}(\zeta_{\xi}) } |\nabla f (\eta)| \,\theta(\eta)\,\d\sigma(\eta) +  \frac{r_{\xi} \,\theta(\zeta_{\xi})}{\theta(\xi)} \intav{\mathcal{B}_{r_{\xi}}(\zeta_{\xi})} |\nabla f (\eta)|\,\d\sigma(\eta) \right) \d \sigma(\xi)\\
  &\ \  \lesssim \int_{\mathbb{S}^d}  |\nabla f (\eta)| \int_{\mathcal{G}_d^-} \frac{\chi_{\mathcal{B}_{r_{\xi}}(\zeta_{\xi})}(\eta) \, \theta(\eta)}{r_{\xi}\,\sigma(r_{\xi})}\,\d \sigma(\xi)\,\d \sigma(\eta) +  \int_{\mathbb{S}^d}  |\nabla f (\eta)| \int_{\mathcal{G}_d^-} \frac{\chi_{\mathcal{B}_{r_{\xi}}(\zeta_{\xi})}(\eta) \, \theta(\zeta_{\xi})}{\sigma(r_{\xi})}\,\d \sigma(\xi)\,\d \sigma(\eta).
 \end{split}
\end{align}
Using \eqref{05172019_17:30} and the fact that $\theta(\zeta_{\xi}) \leq \theta(\xi)$ in this case, we have, for a fixed $\eta$,
\begin{align}\label{05172019_17:48}
 \int_{\mathcal{G}_d^-} \frac{\chi_{\mathcal{B}_{r_{\xi}}(\zeta_{\xi})}(\eta) \, \theta(\zeta_{\xi})}{\sigma(r_{\xi})}\,\d \sigma(\xi) \leq  \int_{\mathcal{G}_d^-} \frac{\chi_{\mathcal{B}_{r_{\xi}}(\zeta_{\xi})}(\eta) \, \theta(\xi)}{\sigma(r_{\xi})}\,\d \sigma(\xi) \lesssim_d \int_0^{\rho} \frac{\theta \, (\sin \theta)^{d-1}}{\sigma (\theta)}\,\d \theta \lesssim_d 1
 \,,
\end{align}
where we used Lemma \ref{Lem13_prep} in the last inequality. For the other integral, we use \eqref{05172019_17:30}, the fact that $\theta(\eta) \leq \theta(\xi)$ in this case, and Lemma \ref{Lem13_prep} again to get
\begin{align}\label{05172019_17:49}
\int_{\mathcal{G}_d^-} \frac{\chi_{\mathcal{B}_{r_{\xi}}(\zeta_{\xi})}(\eta) \, \theta(\eta)}{r_{\xi}\,\sigma(r_{\xi})}\,\d \sigma(\xi) \leq \theta(\eta) \int_{\theta(\eta)}^{\rho} \frac{(\sin \theta)^{d-1}}{r_{\xi}\, \sigma(r_{\xi})} \, \d \theta \lesssim_d \theta(\eta) \int_{\theta(\eta)}^{\rho} \frac{1}{\theta^2}\,  \d \theta \lesssim 1.
\end{align}
Our desired inequality plainly follows from inserting the bounds given by \eqref{05172019_17:48} and \eqref{05172019_17:49} into \eqref{05172019_17:47}. This completes the proof of Theorem \ref{Thm2-sphere} in the Lipschitz case.

\subsection{Passage to the general case} 
We will be brief here since the outline is the same as in \S \ref{General case_Convolution}. The following lemma is the analogue of Lemma \ref{Lem_radialization} in the case of the sphere and we omit its proof.

\begin{lemma}\label{Lem_radialization_sphere}.
\begin{itemize}
\item[(i)] A polar function $f(\xi)$ is weakly differentiable in $\mathbb{S}^d \setminus \{{\bf e}, {\bf - e}\}$ if and only if its polar restriction $f(\theta)$ is weakly differentiable in $(0,\pi)$. In this case, the weak gradient $\nabla f$ of $f(\xi)$ and the weak derivative $f'$ of $f(\theta)$ are related by 
$$\nabla f(\xi) = f'(\theta(\xi)) (-v(\xi, {\bf e})) = f'(\theta(\xi)) \frac{-{\bf e} + (\xi \cdot {\bf e})\xi)}{|-{\bf e} + (\xi \cdot {\bf e})\xi)|}.$$

\item[(ii)] In the situation above, if $f(\xi)$ and $\nabla f(\xi)$ are locally integrable in neighborhoods of ${\bf e}$ and ${\bf - e}$, then $f$ is weakly differentiable in $\mathbb{S}^d$.
\end{itemize}
\end{lemma}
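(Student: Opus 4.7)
The plan is to closely mirror the proof of Lemma~\ref{Lem_radialization} using spherical coordinates $\xi = (\cos\theta, \sin\theta\,\omega)$ with $\omega \in \mathbb{S}^{d-1}$, so that $\d\sigma(\xi) = (\sin\theta)^{d-1}\,\d\theta\,\d\sigma_{d-1}(\omega)$ and the unit vector $-v(\xi,{\bf e})$ coincides with $\partial_\theta$. The spherical analogue of the Euclidean identity $\sum_i \partial_{x_i}(x_i/|x|) = (d-1)/|x|$ is the fact that the surface divergence of $\partial_\theta$ equals $(d-1)\cot\theta$, obtained by differentiating the weight $(\sin\theta)^{d-1}$. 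This will drive the integration-by-parts computations on $\mathbb{S}^d\setminus\{\pm{\bf e}\}$.

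For the forward direction of Part (i), I would take a polar test function $\varphi(\xi) = \Phi(\theta(\xi))$ with $\Phi \in C_c^\infty((0,\pi))$ and test the weak-derivative identity of $f$ against the compactly supported vector field $V(\xi) = \Phi(\theta(\xi))(-v(\xi,{\bf e}))$, whose surface divergence is $\Phi'(\theta) + (d-1)\cot\theta\,\Phi(\theta)$. Setting $\Psi(\theta) := \Phi(\theta)(\sin\theta)^{d-1}$, which gives a bijection of $C_c^\infty((0,\pi))$ onto itself, and using $\Psi'(\theta) = (\sin\theta)^{d-1}[\Phi'(\theta) + (d-1)\cot\theta\,\Phi(\theta)]$, I would integrate out the $\omega$-variable to obtain
\[
\kappa_{d-1}\int_0^\pi f(\theta)\,\Psi'(\theta)\,\d\theta = -\int_0^\pi \Psi(\theta)\,G(\theta)\,\d\theta,
\]
where $G(\theta) = \int_{\mathbb{S}^{d-1}} \nabla f(\cos\theta,\sin\theta\,\omega)\cdot(-v(\xi,{\bf e}))\,\d\sigma_{d-1}(\omega)$, yielding weak differentiability of $f(\theta)$ on $(0,\pi)$. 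To upgrade this to the pointwise identity $\nabla f(\xi) = f'(\theta)(-v(\xi,{\bf e}))$, I would separately rule out any latitudinal component of $\nabla f$ by testing against vector fields of the form $\Phi(\theta)T(\omega)$ tangent to the latitude spheres, using the rotational invariance of $f$ along those circles. The converse is essentially the same: after modifying $f(\theta)$ on a null set to be absolutely continuous on compact subintervals of $(0,\pi)$ and extending polarly, for a generic $\psi \in C_c^\infty(\mathbb{S}^d\setminus\{\pm{\bf e}\})$ I would split $\nabla\psi$ into its $\partial_\theta$- and latitudinal components, integrating by parts in $\theta$ for the former (with weight $(\sin\theta)^{d-1}$), while the latter integrates to zero on each latitude by the divergence theorem on $\mathbb{S}^{d-1}$ since $f(\theta)$ is $\omega$-independent.

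For Part (ii), I would follow the cutoff strategy of Lemma~\ref{Lem_radialization}(ii): pick a smooth polar cutoff $\Psi_\alpha$ equal to one outside the union of $2\alpha$-geodesic balls around $\pm{\bf e}$ and vanishing on the $\alpha$-balls, and test the already proven identity on $\mathbb{S}^d\setminus\{\pm{\bf e}\}$ against $\Psi_\alpha X$ for an arbitrary smooth vector field $X$ on $\mathbb{S}^d$. The main obstacle is to show that the boundary-layer term $\int f\,\nabla\Psi_\alpha\cdot X\, \d\sigma \to 0$ as $\alpha\to 0$; since $|\nabla\Psi_\alpha|\cdot \text{area} \lesssim \alpha^{d-1}$ without any smallness factor, local integrability alone is not enough and one needs a parity cancellation in the spirit of \eqref{Fev27_16:47}. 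The resolution is that $\nabla\Psi_\alpha$ is parallel to $-v(\xi,{\bf e})$, which when parallel-transported to ${\bf e}$ equals $\omega \in \mathbb{S}^{d-1}$, and $\int_{\mathbb{S}^{d-1}}\omega\,\d\sigma_{d-1}(\omega) = 0$. Hence, decomposing $X(\xi) = X({\bf e}) + O(\theta)$ near ${\bf e}$, the leading piece integrates to zero by symmetry (using that $f$ is polar), while the $O(\theta)$ remainder contributes at most a multiple of $\int_{\mathcal{B}_{2\alpha}(\pm{\bf e})}|f|\,\d\sigma$, which vanishes by local integrability. The routine limits $\int\nabla f\cdot\Psi_\alpha X\to\int\nabla f\cdot X$ and $\int f\,\Psi_\alpha \operatorname{div}_s X\to \int f \operatorname{div}_s X$ then yield the integration-by-parts identity on all of $\mathbb{S}^d$.
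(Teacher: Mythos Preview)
Your approach is correct and is precisely the adaptation the paper has in mind: the authors omit the proof of this lemma entirely, stating only that it is the analogue of Lemma~\ref{Lem_radialization} in the spherical setting. Your translation of the Euclidean argument into polar coordinates on $\mathbb{S}^d$ --- the divergence identity $\operatorname{div}_s(\partial_\theta) = (d-1)\cot\theta$ replacing $\sum_i \partial_{x_i}(x_i/|x|) = (d-1)/|x|$, and the parity cancellation $\int_{\mathbb{S}^{d-1}}\omega\,\d\sigma_{d-1}(\omega)=0$ near the poles for Part~(ii) replacing \eqref{Fev27_16:47} --- faithfully carries out what the paper leaves implicit.
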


Consider now a (nonnegative) polar function $f(\xi)$ in $W^{1,1}(\mathbb{S}^d)$. Then, by Lemma \ref{Lem_radialization_sphere}, its polar version $f(\theta)$ is weakly differentiable in $(0,\pi)$ and verifies
\begin{align*}
\int_0^{\pi} |f'(\theta)|\, (\sin \theta)^{d-1}\,\d \theta < \infty.
\end{align*}
In particular, after a possible redefinition on a set of measure zero, one can take $f(\theta)$ continuous in $(0,\pi)$ (in fact, absolutely continuous in each compact interval of $(0,\pi)$). This is equivalent to assuming that $f(\xi)$ is continuous in $\mathbb{S}^d \setminus \{{\bf e}, {\bf - e}\}$.

\smallskip

In this case the detachment set 
$$\mathcal{D}_d := \{ \xi \in \mathbb{S}^d \setminus \{{\bf e}, {\bf -e}\} \ : \ \widetilde{\mathcal{M}} f(\xi) > f(\xi)\}$$
is an open set. One can also show that $\widetilde{\mathcal{M}}f$ is continuous in $\mathbb{S}^d \setminus \{{\bf e}, {\bf - e}\}$ (see the ideas in Step 3 of \S \ref{General case_Convolution}), being indeed locally Lipschitz in $\mathcal{D}_d$ (see the ideas in the proof of Lemma \ref{loc_lipschitz_sphere}, passage \eqref{05302019_23:03} and the remark thereafter). In particular, $\widetilde{\mathcal{M}}f$ is differentiable almost everywhere in $\mathcal{D}_d$. 

\smallskip

Let $\{f_n\} \subset C^{\infty}(\mathbb{S}^{d})$ be a sequence of nonnegative smooth functions such that $f_n \to f$ in $W^{1,1}(\mathbb{S}^d)$. We may simply assume that $f_n$ is given by the spherical convolution of $f$ with a smooth polar kernel $\varphi_n$ (say, non-increasing in the polar angle) of integral $1$ supported in the geodesic ball of radius $1/n$ centered at the north pole; see \cite[Chapter 2, Sections 2.1 and 2.3, and Proposition 2.6.4]{Dai_Xu} for details on the spherical convolution. We may also assume that $f_n \to f$ and $\nabla f_n \to \nabla f$ pointwise almost everywhere in $\mathbb{S}^{d}$ (say, outside a set $\mathcal{X} \subset \mathbb{S}^{d}$ of measure zero). Let $\xi \in \mathcal{D}_d \setminus \mathcal{X}$ be a point at which $\widetilde{\mathcal{M}}f$ is differentiable and all $\widetilde{\mathcal{M}}f_n$ are differentiable (this is still almost everywhere in $\mathcal{D}_d$). Note that for $n$ large we shall have $\xi \in \{\widetilde{\mathcal{M}} f_n(\xi) > f_n(\xi)\}$. We now observe that if $\mathcal{B}_{n} = \mathcal{B}_{r_n}(\zeta_n)$ is a ball that realizes the maximal function $\widetilde{\mathcal{M}} f_n(\xi)$ with $r_n \to r$ and $\zeta_n \to \zeta$, then we must have $r>0$ and the limiting ball $\mathcal{B}_{r}(\zeta)$ realizing the maximal function $\widetilde{\mathcal{M}} f(\xi)$. This plainly implies that 
$$ \widetilde{\mathcal{M}}f_n(\xi) \to \widetilde{\mathcal{M}}f(\xi)$$
as $n \to \infty$, and also, by Lemma \ref{Lem4 - inner integral sphere},
$$\nabla \widetilde{\mathcal{M}}f_n(\xi) \to \nabla \widetilde{\mathcal{M}}f(\xi)$$
as $n \to \infty$.

\smallskip

Since we have proved Theorem \ref{Thm2-sphere} for Lipschitz functions, using Fatou's lemma we have
\begin{align}\label{05202019_16:14}
\int_{\mathcal{D}_d} \big|\nabla \widetilde{\mathcal{M}}f(\xi)\big|\, \d \sigma(\xi) \leq \liminf_{n \to \infty}  \int_{\mathcal{D}_d} \big|\nabla \widetilde{\mathcal{M}}f_n(\xi)\big|\, \d \sigma(\xi) \lesssim_d \liminf_{n \to \infty} \| \nabla f_n\|_{L^1(\mathbb{S}^d)} = \| \nabla f\|_{L^1(\mathbb{S}^d)}.
\end{align}
This places us in position to adapt the one-dimensional argument of \cite[Section 5.4]{CS} to show that $\widetilde{\mathcal{M}}f(\theta)$ is weakly differentiable in $(0,\pi)$, with weak derivative given by
\begin{align}\label{05202019_16:15}
\chi_{\mathcal{D}_1^c} f'(\theta) + \chi_{\mathcal{D}_1} (\widetilde{\mathcal{M}}f)'(\theta)\,,
\end{align}
where $\mathcal{D}_1 = \{\theta(\xi) \ : \ \xi \in  \mathcal{D}_d\}$ is the polar version of $\mathcal{D}_d$. In fact, if $\theta \in \mathcal{D}_1^c$ is a point of differentiability of $f$ (which are almost all points of $\mathcal{D}_1^c$) one can verify that $f'(\theta) = 0$, otherwise $\theta$ would belong to $\mathcal{D}_1$ instead. The weak derivative of $\widetilde{\mathcal{M}}f(\theta)$ is then simply $\chi_{\mathcal{D}_1} \big(\widetilde{\mathcal{M}} f\big)'(\theta)$. This in turn implies that $\widetilde{\mathcal{M}}f$ is weakly differentiable in $\mathbb{S}^d \setminus \{{\bf e}, {\bf - e}\}$ by Lemma \ref{Lem_radialization_sphere}. From \eqref{05202019_16:14} and \eqref{05202019_16:15} we have 
\begin{align}\label{05202019_16:21}
\big\|\nabla \widetilde{\mathcal{M}}f\big\|_{L^1(\mathbb{S}^d)} \lesssim_d  \| \nabla f\|_{L^1(\mathbb{S}^d)},
\end{align}
which is our desired bound. From the Sobolev embedding we know that $f \in L^{d/(d-1)}(\mathbb{S}^d)$, and hence so does $\widetilde{\mathcal{M}}f$. In particular, $\widetilde{\mathcal{M}}f$ is locally integrable in $\mathbb{S}^d$. From \eqref{05202019_16:21} we already know that $\nabla \widetilde{\mathcal{M}}f$ is locally integrable in $\mathbb{S}^d$, and a further application of Lemma \ref{Lem_radialization_sphere} shows that $\widetilde{\mathcal{M}}f$ is in fact weakly differentiable in  $\mathbb{S}^d$, which completes our proof.

\section{Proof of Theorem \ref{Thm_conv_spheres}}\label{Section 4}
We now turn our attention to the proof of Theorem \ref{Thm_conv_spheres}. As presented in the introduction, the notation here is slightly different, as we denote our initial datum by $u_0$ and our maximal function by $u^*$. As usual, throughout this section, we assume that $u_0$ is real-valued and nonnegative (or $+\infty$).

\subsection{Lipschitz case} As in the proofs of the previous two theorems in this paper, we address first the case when our polar $u_0 \in W^{1,1}(\mathbb{S}^d)$ is a Lipschitz function. In this case we have that $u^*$ is a polar function that is also Lipschitz (see \cite[Lemma 16 (ii)]{CFS}). 

\subsubsection{A preliminary lemma} The following result will be important for our purposes.
\begin{lemma}\label{key_lemma_conv_sphere_heat}
Let $u_0: \mathbb{S}^d \to \R^+$ be a polar and Lipschitz function. Then, in polar coordinates,
$$\widetilde{\mathcal{M}}u_0\big(\tfrac{\pi}{2}\big) - u_0\big(\tfrac{\pi}{2}\big) \lesssim_d \|\nabla u_0\|_{L^1(\mathbb{S}^{d})}.$$
\end{lemma}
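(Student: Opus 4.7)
The plan is to bound $\intav{\mathcal{B}} u_0\, d\sigma - u_0(\tfrac{\pi}{2})$ for an arbitrary ball $\overline{\mathcal{B}} = \overline{\mathcal{B}_r(\zeta)} \ni \xi$ (where $\theta(\xi) = \pi/2$) by a dimensional multiple of $\|\nabla u_0\|_{L^1(\mathbb{S}^d)}$, and then pass to the supremum. The polar symmetry of $u_0$ lets me rotate around the ${\bf e}$-axis so that $\zeta$ lies on the half great circle through ${\bf e}$, $\xi$, $-{\bf e}$, with polar angle $\theta_0 \in [0,\pi]$; the reflection $\theta \mapsto \pi - \theta$ (which preserves $u_0(\pi/2)$, $\widetilde{\mathcal{M}}u_0(\pi/2)$ and $\|\nabla u_0\|_{L^1}$) lets me further assume $\theta_0 \in [0,\pi/2]$, and $\xi \in \overline{\mathcal{B}}$ then reads $r \geq \pi/2 - \theta_0$. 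Applying $u_0(\theta(\eta)) - u_0(\pi/2) = \int_{\pi/2}^{\theta(\eta)} u_0'(s)\, ds$, taking absolute values inside, and swapping the order of integration via Fubini, I obtain
\[
\intav{\mathcal{B}} u_0\, d\sigma - u_0\bigl(\tfrac{\pi}{2}\bigr) \leq \frac{1}{\sigma(\mathcal{B})} \int_0^{\pi} |u_0'(s)|\, \tilde{G}(s)\, ds,
\]
where $\tilde{G}(s) := \sigma(\mathcal{B} \cap \{\theta \geq s\})$ for $s \geq \pi/2$ and $\tilde{G}(s) := \sigma(\mathcal{B} \cap \{\theta \leq s\})$ for $s \leq \pi/2$.

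I will use two upper bounds on $\tilde{G}$: the trivial $\tilde{G}(s) \leq \sigma(\mathcal{B})$, and the weighted bound $\tilde{G}(s) \leq \tfrac{\pi}{2}\,\kappa_{d-1}(\sin s)^{d-1}$, which follows from $\tilde{G}(s) \leq \sigma(\{\theta \geq s\})$ (respectively $\sigma(\{\theta \leq s\})$) together with the elementary inequality $\int_0^u (\sin t)^{d-1}\,dt \leq u\,(\sin u)^{d-1} \leq \tfrac{\pi}{2}(\sin u)^{d-1}$ for $u \in [0,\pi/2]$ (valid since $\sin$ is nondecreasing there), applied with $u = s$ or $u = \pi - s$ as appropriate. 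I then split according to the size of $r$. If $r \leq \pi/8$, the constraint $\theta_0 \geq \pi/2 - r \geq 3\pi/8$ forces $\overline{\mathcal{B}} \subset \{\pi/4 \leq \theta \leq 3\pi/4\}$, on which $(\sin s)^{d-1} \geq 2^{-(d-1)/2}$; using $\tilde{G}(s) \leq \sigma(\mathcal{B})\chi_{[\pi/4,3\pi/4]}(s)$,
\[
\frac{1}{\sigma(\mathcal{B})}\int_0^{\pi} |u_0'(s)|\,\tilde{G}(s)\,ds \leq \int_{\pi/4}^{3\pi/4}|u_0'(s)|\,ds \leq \frac{2^{(d-1)/2}}{\kappa_{d-1}} \|\nabla u_0\|_{L^1(\mathbb{S}^d)}.
\]
If $r > \pi/8$, then $\sigma(\mathcal{B}) \geq \sigma(\mathcal{B}_{\pi/8}) = \kappa_{d-1}\int_0^{\pi/8}(\sin t)^{d-1}\,dt =: c_d > 0$ and the weighted bound yields
\[
\frac{1}{\sigma(\mathcal{B})}\int_0^{\pi} |u_0'(s)|\,\tilde{G}(s)\,ds \leq \frac{\pi/2}{\sigma(\mathcal{B})}\|\nabla u_0\|_{L^1(\mathbb{S}^d)} \leq \frac{\pi}{2c_d}\|\nabla u_0\|_{L^1(\mathbb{S}^d)}.
\]
Taking the supremum over $\overline{\mathcal{B}} \ni \xi$ concludes the proof.

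The main obstacle I anticipate is the interplay between the geometric size $\sigma(\mathcal{B})$ and the weight $(\sin s)^{d-1}$ appearing in $\tilde{G}(s)$: neither of the two bounds suffices on its own, since $\sigma(\mathcal{B})$ can be arbitrarily small while the ball still extends close to a pole where the weight degenerates. The split into small and large radii is essentially forced by this dichotomy, and the threshold $\pi/8$ is chosen precisely so that in the small-radius regime the entire ball lies inside the band $[\pi/4, 3\pi/4]$, where the weight is harmless.
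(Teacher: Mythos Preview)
Your proof is correct. It differs from the paper's argument in organization but rests on the same geometric facts. The paper introduces the intermediate quantity $\sup_{[\pi/4,3\pi/4]} u_0$, bounds $\widetilde{\mathcal{M}}u_0(\tfrac{\pi}{2})$ against this sup by slicing the optimal ball into the three polar-angle regions $[0,\tfrac{\pi}{4}]$, $[\tfrac{\pi}{4},\tfrac{3\pi}{4}]$, $[\tfrac{3\pi}{4},\pi]$ and applying the fundamental theorem of calculus from the endpoints $\tfrac{\pi}{4}$ and $\tfrac{3\pi}{4}$ on the outer pieces; it then bounds $\sup_{[\pi/4,3\pi/4]} u_0 - u_0(\tfrac{\pi}{2})$ separately. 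You instead apply the fundamental theorem of calculus from $\tfrac{\pi}{2}$ directly, encode the geometry in the single function $\tilde G(s)$, and split according to the radius rather than the polar angle. Your two bounds on $\tilde G$ are exactly the two mechanisms the paper exploits (the central band has weight bounded below; the polar caps have measure $\lesssim (\sin s)^{d-1}$), so the content is the same, but your packaging avoids the auxiliary sup and handles all balls uniformly before passing to the supremum. This makes your argument slightly more direct, while the paper's decomposition makes the role of the band $[\tfrac{\pi}{4},\tfrac{3\pi}{4}]$ a bit more visible.
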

\begin{proof}
Let us assume that $\widetilde{\mathcal{M}}u_0\big(\tfrac{\pi}{2}\big) > u_0\big(\tfrac{\pi}{2}\big)$. First observe that
\begin{align*}
\widetilde{\mathcal{M}}u_0\big(\tfrac{\pi}{2}\big) - u_0\big(\tfrac{\pi}{2}\big) = \left(\widetilde{\mathcal{M}}u_0\big(\tfrac{\pi}{2}\big) - \sup_{\theta \in [\frac{\pi}{4}, \frac{3\pi}{4}]} u_0(\theta)\right) + \left( \sup_{\theta \in [\frac{\pi}{4}, \frac{3\pi}{4}]} u_0(\theta) - u_0\big(\tfrac{\pi}{2}\big)\right),
\end{align*}
and 
\begin{align*}
 \sup_{\theta \in [\frac{\pi}{4}, \frac{3\pi}{4}]} u_0(\theta) - u_0\big(\tfrac{\pi}{2}\big) &\leq \int_{\frac{\pi}{4}}^{\frac{3\pi}{4}} |u_0'(\theta)|\,\d\theta \lesssim_d \int_{\frac{\pi}{4}}^{\frac{3\pi}{4}} |u_0'(\theta)|\,(\sin \theta)^{d-1}\,\d\theta \lesssim_d \|\nabla u_0\|_{L^1(\mathbb{S}^{d})}.
\end{align*}
Therefore it suffices to bound $\widetilde{\mathcal{M}}u_0\big(\tfrac{\pi}{2}\big) - \sup_{\theta \in [\frac{\pi}{4}, \frac{3\pi}{4}]} u_0(\theta)$. Bringing things back to the notation of $\S$ \ref{Prelim_11:49}, let $\xi \in \mathbb{S}^d$ be such that $\theta(\xi) = \frac{\pi}{2}$ and let $\overline{\mathcal{B}} = \overline{\mathcal{B}_{r}(\zeta)} \in {\bf B}_{\xi}$. Let $\mathcal{Z} = \{\eta \in \mathbb{S}^d \ : \ \frac{\pi}{4} \leq \theta(\eta) \leq \frac{3\pi}{4}\}$. If $\mathcal{B} \subset \mathcal{Z}$, then $\widetilde{\mathcal{M}}u_0\big(\tfrac{\pi}{2}\big) - \sup_{\theta \in [\frac{\pi}{4}, \frac{3\pi}{4}]} u_0(\theta) \leq 0$ and we are done. Assume henceforth that $\mathcal{B} \not\subset \mathcal{Z}$ and that $\widetilde{\mathcal{M}}u_0\big(\tfrac{\pi}{2}\big) - \sup_{\theta \in [\frac{\pi}{4}, \frac{3\pi}{4}]} u_0(\theta) \geq 0$. Writing $\eta = (\cos \theta, (\sin \theta) \,\omega)$, with $\omega \in \mathbb{S}^{d-1}$, we define
$$\ell(\theta) = \int_{\mathbb{S}^{d-1}} \chi_{\mathcal{B}}(\eta)\, (\sin \theta)^{d-1}\,\d \sigma_{d-1}(\omega)$$
(that is, the $(d-1)$-dimensional measure of the intersection of $\mathcal{B}$ with the level set $d({\bf e}, \eta) = \theta$). We then have
\begin{align}\label{05212019_12:59}
\begin{split}
&\widetilde{\mathcal{M}}u_0\big(\tfrac{\pi}{2}\big)  = \intav{\mathcal{B}} u_0(\eta)\,\d\sigma(\eta) = \frac{1}{\sigma(\mathcal{B})} \int_0^{\pi} u_0(\theta) \, \ell(\theta) \,\d \theta\\
& \ \ \ \ \ = \frac{1}{\sigma(\mathcal{B})} \left( \int_0^{\frac{\pi}{4}} u_0(\theta) \, \ell(\theta) \,\d \theta + \int_{\frac{\pi}{4}}^{\frac{3\pi}{4}} u_0(\theta) \, \ell(\theta) \,\d \theta + \int_{\frac{3\pi}{4}}^{\pi} u_0(\theta) \, \ell(\theta) \,\d \theta \right)\\
& \ \ \ \ \ \leq \left(\sup_{\theta \in [\frac{\pi}{4}, \frac{3\pi}{4}]} u_0(\theta) \right)\frac{1}{\sigma(\mathcal{B})} \int_{\frac{\pi}{4}}^{\frac{3\pi}{4}} \ell(\theta) \,\d \theta + \frac{1}{\sigma(\mathcal{B})} \left( \int_0^{\frac{\pi}{4}} u_0(\theta) \, \ell(\theta) \,\d \theta + \int_{\frac{3\pi}{4}}^{\pi} u_0(\theta) \, \ell(\theta) \,\d \theta \right).
\end{split}
\end{align}
Now observe that 
\begin{align}\label{05212019_12:52}
\begin{split}
\int_{\frac{3\pi}{4}}^{\pi} u_0(\theta) \, \ell(\theta) \,\d \theta & = \int_{\frac{3\pi}{4}}^{\pi} \left(\int_{\frac{3\pi}{4}}^{\theta} u_0'(\tau) \,\d\tau +  u_0(\tfrac{3\pi}{4})\right)\, \ell(\theta) \,\d \theta\\
& = u_0(\tfrac{3\pi}{4}) \int_{\frac{3\pi}{4}}^{\pi}\ell(\theta) \,\d \theta  + \int_{\frac{3\pi}{4}}^{\pi}  u_0'(\tau)\left(\int_{\tau}^{\pi}\ell(\theta) \,\d \theta\right) \d \tau.
\end{split}
\end{align}
Plugging the bound
\begin{align*}
\int_{\tau}^{\pi}\ell(\theta) \,\d \theta \lesssim_d \int_{\tau}^{\pi}(\sin \theta)^{d-1} \,\d \theta \ \lesssim \ (\sin \tau)^{d-1}
\end{align*}
into \eqref{05212019_12:52} we get
\begin{align}\label{05212019_13:00}
\int_{\frac{3\pi}{4}}^{\pi} u_0(\theta) \, \ell(\theta) \,\d \theta \leq \left(\sup_{\theta \in [\frac{\pi}{4}, \frac{3\pi}{4}]} u_0(\theta)\right)\int_{\frac{3\pi}{4}}^{\pi}\ell(\theta) \,\d \theta + C_d\,\|\nabla u_0\|_{L^1(\mathbb{S}^{d})},
\end{align}
where $C_d$ is a universal constant. In an analogous way we obtain
\begin{align}\label{05212019_13:01}
\int_{0}^{\frac{\pi}{4}} u_0(\theta) \, \ell(\theta) \,\d \theta \leq \left(\sup_{\theta \in [\frac{\pi}{4}, \frac{3\pi}{4}]} u_0(\theta)\right)\int_{0}^{\frac{\pi}{4}}\ell(\theta) \,\d \theta + C_d\,\|\nabla u_0\|_{L^1(\mathbb{S}^{d})}.
\end{align}
Combining \eqref{05212019_12:59},  \eqref{05212019_13:00} and  \eqref{05212019_13:01} we get
\begin{align*}
\widetilde{\mathcal{M}}u_0\big(\tfrac{\pi}{2}\big) \leq \sup_{\theta \in [\frac{\pi}{4}, \frac{3\pi}{4}]} u_0(\theta) + C_d\, \|\nabla u_0\|_{L^1(\mathbb{S}^{d})},
\end{align*}
from where our result follows.
\end{proof}

\subsubsection{Proof of Theorem \ref{Thm_conv_spheres} - Lipschitz case} Assume $d\geq 2$ since the case $d=1$ has already been treated in \cite[Theorem 3]{CFS}. Define the detachment set (excluding the poles)
\begin{equation*}
\mathcal{A}_d = \{ \xi \in \mathbb{S}^d \setminus\{{\bf e}, {\bf -e}\} \ : \  u^*(\xi) > u_0(\xi)\}
\end{equation*}
and its one-dimensional polar version 
\begin{equation*}
\mathcal{A}_1 = \{ \theta(\xi)  \ : \ \xi \in \mathcal{A}_d \} \subset (0,\pi).
\end{equation*} 
These sets are open and from \cite[Lemma 17]{CFS} we know that $u^*$ is subharmonic on  $\mathcal{A}_d$. We write
\begin{equation*}
\mathcal{A}_1 = \bigcup_{i=0}^{\infty} (a_i, b_i)
\end{equation*}
as a countable union of disjoint open intervals. If $\frac{\pi}{2} \in \mathcal{A}_1$ we let $\frac{\pi}{2} \in (a_0,b_0)$ and let 
\begin{equation*}
\mathcal{A}_1^- = \bigcup_{(a_i, b_i) \subset \big(0, \tfrac{\pi}{2}\big)} (a_i, b_i)   \ \ \ {\rm and} \ \ \ \mathcal{A}_1^+ = \bigcup_{(a_i, b_i) \subset \big(\tfrac{\pi}{2}, \pi\big)} (a_i, b_i).
\end{equation*}
If $\frac{\pi}{2} \notin \mathcal{A}_1$ we just regard $(a_0, b_0)$ as empty, and keep $\mathcal{A}_1^{\pm}$ as above. 

\smallskip

Let $(a,b)$ denote a generic interval $(a_i,b_i)$ of this union. As in the proof of Theorem \ref{Thm1}, the subharmonicity implies that $u^*$ has no strict local maximum in $(a,b)$ and then there exists $\tau$ with $a \leq \tau \leq b$ such that $u^*$ is non-increasing in $[a, \tau]$ and non-decreasing in $[\tau, b]$. We then have $(u^*)'(\theta) \leq 0$ a.e. in $a < \theta < \tau$, and $(u^*)'(\theta) \geq 0$ a.e. in $\tau < \theta < b$. 
 
\smallskip

An important idea of this proof is to proceed via the comparison \eqref{20200625_11:06} to the uncentered Hardy-Littlewood maximal function when appropriate, and make use of the gradient bound established in Theorem \ref{Thm2-sphere}. We consider first the case when $(a,b) \subset \mathcal{A}_1^-$. Using integration by parts we get
\begin{align}
\int_a^b \big|(u^*)'(\theta)\big|\, (\sin \theta)^{d-1}&\,\d \theta  = -\int_a^{\tau} (u^*)'(\theta)\, (\sin \theta)^{d-1}\,\d \theta + \int_{\tau}^b (u^*)'(\theta)\, (\sin \theta)^{d-1}\,\d \theta \nonumber  \\
& = u^*(a)\, (\sin a)^{d-1} + u^*(b)\, (\sin b)^{d-1} - 2\,u^*(\tau)\, (\sin \tau)^{d-1} \nonumber \\
&  \ \ \ \ \ \ \ + \int_a^{\tau} u^*(\theta)\, \frac{\partial}{\partial \theta}  (\sin \theta)^{d-1}\, \d \theta - \int_\tau^{b} u^*(\theta)\, \frac{\partial}{\partial \theta}  (\sin \theta)^{d-1}\, \d \theta \nonumber \\
& \leq u_0(a)\, (\sin a)^{d-1} + u_0(b)\, (\sin b)^{d-1} - 2\,u_0(\tau)\, (\sin \tau)^{d-1} \label{05212019_16:33}\\
&  \ \ \ \ \ \ \ + \int_a^{\tau} \widetilde{\mathcal{M}}u_0(\theta)\, \frac{\partial}{\partial \theta}  (\sin \theta)^{d-1}\, \d \theta - \int_\tau^{b} u_0(\theta)\, \frac{\partial}{\partial \theta}  (\sin \theta)^{d-1}\, \d \theta \nonumber \\
& \leq \int_a^{b}\big|u_0'(\theta)\big|\, (\sin \theta)^{d-1}\,\d \theta + \int_a^{\tau} \big( \widetilde{\mathcal{M}}u_0(\theta) - u_0(\theta)\big)\, \frac{\partial}{\partial \theta}  (\sin \theta)^{d-1}\, \d \theta.\nonumber 
\end{align}
In the computation above we have taken advantage of the fact that $\frac{\partial}{\partial \theta}  (\sin \theta)^{d-1} \geq 0$. Note also that we have no problem if $a=0$ since $\lim_{a\to 0} u^*(a)\, (\sin a)^{d-1} = 0$ as $d \geq 2$. If we sum \eqref{05212019_16:33} over all the intervals $(a,b) \subset \mathcal{A}_1^-$ we find
\begin{align}\label{05212019_17:02}
& \!\!\int_{\mathcal{A}_1^-} \! \big|(u^*)'(\theta)\big|\, (\sin \theta)^{d-1}\,\d \theta  \leq \!\! \int_0^{\frac{\pi}{2}}\!\!\big|u_0'(\theta)\big|\, (\sin \theta)^{d-1}\,\d \theta + \int_0^{\frac{\pi}{2}} \!\!\big( \widetilde{\mathcal{M}}u_0(\theta) - u_0(\theta)\big)\, \frac{\partial}{\partial \theta}  (\sin \theta)^{d-1}\, \d \theta \nonumber\\
& = \int_0^{\frac{\pi}{2}}\big|u_0'(\theta)\big|\, (\sin \theta)^{d-1}\,\d \theta - \int_0^{\frac{\pi}{2}} \big( \big(\widetilde{\mathcal{M}}u_0\big)'(\theta) - u_0'(\theta)\big)\,  (\sin \theta)^{d-1}\, \d \theta + \Big(\widetilde{\mathcal{M}}u_0\big(\tfrac{\pi}{2}\big) - u_0\big(\tfrac{\pi}{2}\big)\Big)\\
& \lesssim_d \int_0^{\pi}\big|u_0'(\theta)\big|\, (\sin \theta)^{d-1}\,\d \theta, \nonumber
\end{align}
where we have used Theorem \ref{Thm2-sphere} and Lemma \ref{key_lemma_conv_sphere_heat}.

\smallskip

Finally we have to consider the case when $\frac{\pi}{2} \in \mathcal{A}_1$ and bound the integral $\int_{a_0}^{\pi/2} \big|(u^*)'(\theta)\big|\, (\sin \theta)^{d-1}\,\d \theta$. Let $\tau_0$ be the corresponding local minimum over the interval $(a_0, b_0)$. Let $c_0 = \min\{\tau_0, \frac{\pi}{2}\}$. Proceeding as in  \eqref{05212019_16:33} and \eqref{05212019_17:02} we obtain
\begin{align}\label{05212019_17:15}
\begin{split}
\!\!-\int_{a_0}^{c_0} \!\!&(u^*)'(\theta) (\sin \theta)^{d-1}\d \theta = u^*(a_0) (\sin a_0)^{d-1} \!\!- u^*(c_0) (\sin c_0)^{d-1} + \int_{a_0}^{c_0} \!u^*(\theta) \frac{\partial}{\partial \theta}  (\sin \theta)^{d-1} \d \theta\\
& \leq u_0(a_0)\, (\sin a_0)^{d-1} - u_0(c_0)\, (\sin c_0)^{d-1} + \int_{a_0}^{c_0} \widetilde{\mathcal{M}}u_0(\theta)\, \frac{\partial}{\partial \theta}  (\sin \theta)^{d-1}\, \d \theta\\
& = -\int_{a_0}^{c_0} u_0'(\theta)\, (\sin \theta)^{d-1}\,\d \theta + \int_{a_0}^{c_0} \big( \widetilde{\mathcal{M}}u_0(\theta) - u_0(\theta)\big)\, \frac{\partial}{\partial \theta}  (\sin \theta)^{d-1}\, \d \theta\\
& \lesssim_d \int_0^{\pi}\big|u_0'(\theta)\big|\, (\sin \theta)^{d-1}\,\d \theta.
\end{split}
\end{align}
The last estimate we need is the following
\begin{align}
\int_{c_0}^{\frac{\pi}{2}} (u^*)'(\theta)\, (\sin \theta)^{d-1}\,\d \theta & = u^*\big(\tfrac{\pi}{2}) - u^*(c_0)(\sin c_0)^{d-1} - \int_{c_0}^{\frac{\pi}{2}} u^*(\theta)\, \frac{\partial}{\partial \theta}  (\sin \theta)^{d-1}\, \d \theta \nonumber\\
& \leq \widetilde{\mathcal{M}}u_0\big(\tfrac{\pi}{2}\big) - u_0(c_0)(\sin c_0)^{d-1} - \int_{c_0}^{\frac{\pi}{2}} u_0(\theta)\, \frac{\partial}{\partial \theta}  (\sin \theta)^{d-1}\, \d \theta \nonumber \\
& = \Big(\widetilde{\mathcal{M}}u_0\big(\tfrac{\pi}{2}\big) - u_0\big(\tfrac{\pi}{2}\big)\Big) + \int_{c_0}^{\frac{\pi}{2}} u_0'(\theta)\,  (\sin \theta)^{d-1}\, \d \theta \label{05212019_17:17}\\
& \lesssim_d \int_0^{\pi}\big|u_0'(\theta)\big|\, (\sin \theta)^{d-1}\,\d \theta. \nonumber 
\end{align}

By combining \eqref{05212019_17:02}, \eqref{05212019_17:15} and \eqref{05212019_17:17}, and adding the integral over the set \{$u^* = u_0$\} we find
\begin{align*}
\int_{0}^{\frac{\pi}{2}} \big|(u^*)'(\theta)\big|\, (\sin \theta)^{d-1}\,\d \theta  \lesssim_d \int_0^{\pi}\big|u_0'(\theta)\big|\, (\sin \theta)^{d-1}\,\d \theta.
\end{align*}
By symmetry we then have
\begin{align*}
\int_{\frac{\pi}{2}}^{\pi} \big|(u^*)'(\theta)\big|\, (\sin \theta)^{d-1}\,\d \theta  \lesssim_d \int_0^{\pi}\big|u_0'(\theta)\big|\, (\sin \theta)^{d-1}\,\d \theta,
\end{align*}
and the proof is complete by adding these two estimates.

\subsection{Passage to the general case} The passage to the general case of a polar $f \in W^{1,1}(\mathbb{S}^d)$ follows closely the outline of \S \ref{General case_Convolution}, with Lemma \ref{Lem_radialization} replaced by Lemma \ref{Lem_radialization_sphere} when appropriate. We omit the details.

\section*{Acknowledgments}
E.C. acknowledges support from FAPERJ - Brazil. C.G.R. was supported by CAPES - Brazil. The authors are thankful to Juan Paucar for helpful comments.


\begin{thebibliography}{99}

\bibitem{ACP}  
J. M. Aldaz, L. Colzani and J. P\'{e}rez L\'{a}zaro,
\newblock Optimal bounds on the modulus of continuity of the uncentered Hardy-Littlewood maximal function,
\newblock J. Geom. Anal. 22 (2012), 132--167.

 \bibitem{AP} 
J. M. Aldaz and J. P\'{e}rez L\'{a}zaro, 
\newblock Functions of bounded variation, the derivative of the one dimensional maximal function, and applications to inequalities,
\newblock Trans. Amer. Math. Soc. 359 (2007), no. 5, 2443--2461.

\bibitem{BM}
D. Beltran and J. Madrid,
\newblock Endpoint Sobolev continuity of the fractional maximal function in higher dimensions,
\newblock to appear in Int. Math. Res. Not. https://doi.org/10.1093/imrn/rnz28.
\bibitem{BRS}
D. Beltran, J. P. Ramos and O. Saari,
\newblock Regularity of fractional maximal functions through Fourier multipliers,
\newblock  J. Funct. Anal. 276 (2019), no. 6, 1875--1892. 

\bibitem{CFS}
E. Carneiro, R. Finder and M. Sousa, 
\newblock On the variation of maximal operators of convolution type II,
\newblock Rev. Mat. Iberoam. 34 (2018), 739--766.

\bibitem{CH}
E. Carneiro and K. Hughes,
\newblock On the endpoint regularity of discrete maximal operators, 
\newblock Math. Res. Lett. 19, no. 6 (2012), 1245--1262.

\bibitem{CMa}
E. Carneiro and J. Madrid, 
\newblock Derivative bounds for fractional maximal functions,
\newblock Trans. Amer. Math. Soc. 369 (2017), no. 6, 4063--4092.

\bibitem{CMP}
E. Carneiro, J. Madrid and L. B. Pierce,
\newblock Endpoint Sobolev and BV continuity for maximal operators,
\newblock J. Funct. Anal. 273 (2017), 3262-3294.

\bibitem{CS}
E. Carneiro and B. F. Svaiter, 
\newblock On the variation of maximal operators of convolution type,
\newblock J. Funct. Anal. 265 (2013), 837--865.

\bibitem{Dai_Xu} 
F. Dai and Y. Xu,
\newblock {\it Approximation theory and harmonic analysis on spheres and balls},
\newblock Springer, 2013.

\bibitem{HM} 
P. Haj\l asz and J. Mal\'{y},
\newblock On approximate differentiability of the maximal function,
\newblock Proc. Amer. Math. Soc. 138 (2010), 165--174.

\bibitem{HO} 
P. Haj\l asz and J. Onninen, 
\newblock On boundedness of maximal functions in Sobolev spaces,
\newblock Ann. Acad. Sci. Fenn. Math. 29 (2004),  no. 1, 167--176.

\bibitem{HLX}
J. Hart, F. Liu and Q. Xue,
\newblock Regularity and continuity of local multilinear maximal type operator,
\newblock J. Geom. Anal. (2020). https://doi.org/10.1007/s12220-020-00400-7.

\bibitem{Ki}
J. Kinnunen, 
\newblock The Hardy-Littlewood maximal function of a Sobolev function,
\newblock Israel J. Math. 100 (1997), 117--124.

\bibitem{KL}
J. Kinnunen and P. Lindqvist, 
\newblock The derivative of the maximal function,
\newblock J. Reine Angew. Math. 503 (1998), 161--167.

\bibitem{KiSa}
J. Kinnunen and E. Saksman,
\newblock Regularity of the fractional maximal function,
\newblock Bull. London Math. Soc. 35 (2003), no. 4, 529--535. 

\bibitem{Ku}
O. Kurka,
\newblock On the variation of the Hardy-Littlewood maximal function,
\newblock Ann. Acad. Sci. Fenn. Math. 40 (2015), 109--133.

\bibitem{LW}
F. Liu and H. Wu,
\newblock A note on the endpoint regularity of the discrete maximal operator,
\newblock Proc. Amer. Math. Soc. 147 (2019), no. 2, 583--596.

\bibitem{Lu1}
H. Luiro, 
\newblock Continuity of the maximal operator in Sobolev spaces,
\newblock Proc. Amer. Math. Soc. 135 (2007), no. 1, 243--251.

\bibitem{Lu2}
H. Luiro,
\newblock The variation of the maximal function of a radial function,
\newblock  Ark. Mat. 56 (2018), no. 1, 147--161.

\bibitem{ML}
H. Luiro and J. Madrid, 
\newblock The variation of the fractional maximal function of a radial function,
\newblock Int. Math. Res. Not. 17 (2019), 5284--5298.

\bibitem{M}
J. Madrid, 
\newblock Sharp inequalities for the variation of the discrete maximal function, 
\newblock Bull. Aust. Math. Soc. 95 (2017), no. 1, 94--107.

\bibitem{M2}
J. Madrid,
\newblock Endpoint Sobolev and BV continuity for maximal operators II,
\newblock Rev. Mat. Iberoam. 35, no 7 (2019) 2151--2168.


\bibitem{PPSS}
C. Pérez, T. Picon, O. Saari and M. Sousa, 
\newblock Regularity of maximal functions on Hardy-Sobolev spaces, 
\newblock Bull. Lond. Math. 50 (2018), no. 6, 1007--1015.

\bibitem{Ra}
J. P. Ramos,
\newblock Sharp total variation results for maximal functions,
\newblock Ann. Acad. Sci. Fenn. Math. 44 (2019), 41--64.

\bibitem{Saa}
O. Saari,
\newblock Poincar\'{e} inequalities for the maximal function,
\newblock Ann. Sc. Norm. Super. Pisa Cl. Sci. (5) 19 (2019), 1065--1083.

\bibitem{SY}
R. Schoen, R. and S.-T. Yau,
\newblock {\it Lectures in differential geometry},
\newblock International Press, 2010.

\bibitem{S}
E. M. Stein, 
\newblock {\it Singular integrals and differentiability properties of functions},
\newblock Princeton Univ. Press, 1970.

\bibitem{Ta}
H. Tanaka, 
\newblock A remark on the derivative of the one-dimensional Hardy-Littlewood maximal function,
\newblock Bull. Austral. Math. Soc. 65 (2002), no. 2, 253--258.


\end{thebibliography}
\end{document}